\newcommand{\labitem}[2]{
\def\@itemlabel{\textbf{#1}}
\item
\def\@currentlabel{#1}\label{#2}}
\newtheorem{theorem}{Theorem}[section]
\newtheorem{lemma}[theorem]{Lemma}
\newtheorem{claim}[theorem]{Claim}
\newtheorem{proposition}[theorem]{Proposition}
\newtheorem{corollary}[theorem]{Corollary}
\theoremstyle{definition}
\newtheorem{definition}[theorem]{Definition}
\newtheorem{example}[theorem]{Example}
\theoremstyle{remark}
\newtheorem{remark}[theorem]{Remark}
\newtheorem{remarks}[theorem]{Remarks}
\numberwithin{equation}{section}
\theoremstyle{plain}
\newtheorem{maintheorem}{Theorem}
\newcommand{\R}{\ensuremath{\mathbb{R}}}
\newcommand{\nt}{\ensuremath{\mathbb{N}}}
\newcommand{\II}{\ensuremath{\mathbb{I}}}
\newcommand{\interior}{\operatorname{Int}}
\newcommand{\closure}{\operatorname{Cl}}
\newcommand{\boundary}{\operatorname{Bd}}
\newcommand{\ddi}{\operatorname{d\!}}
\newcommand{\dd}{\operatorname{d}}
\newcommand{\Di}{\operatorname{D}}
\newcommand{\D}{\operatorname{D}\!}
\newcommand{\res}[2]{{#1}\raisebox{-1ex}{$\left|\,_{#2}\right.$}}
\newcommand{\s}{\operatorname{S}\!}
\newcommand{\distance}{\operatorname{dist}}
\newcommand{\Bimod}{\operatorname{Bimod}}
\newcommand{\cir}{\mathbb{T}^1}
\begin{document}
\textcolor{black}{}\global\long\def\TDD#1{{\color{red}To\, Do(#1)}}

\title{Ergodic properties of bimodal circle maps}

\author{Sylvain Crovisier}
\address{Laboratoire de Math\'ematiques d'Orsay, CNRS - Universit\'e Paris-Sud}
\curraddr{B\^atiment 425, F-91405 Orsay Cedex, France}
\email{Sylvain.Crovisier@math.u-psud.fr}

\author{Pablo Guarino}
\address{Instituto de Matem\'atica e Estat\'istica, Universidade Federal Fluminense}
\curraddr{Rua M\'ario Santos Braga S/N, 24020-140, Niter\'oi, Rio de Janeiro RJ, Brazil}
\email{pablo\_\,guarino@id.uff.br}

\author{Liviana Palmisano}
\address{IMPAN, Institute of Mathematics, Polish Academy of Sciences, Warsaw}
\email{liviana.palmisano@gmail.com}

\subjclass[2010]{Primary }

\keywords{}

\begin{abstract} We give conditions that characterize the existence of an absolutely continuous invariant probability measure for a degree one $C^2$ endomorphism of the circle which is bimodal, such that all its periodic orbits are repelling, and such that both boundaries of its rotation interval are irrational numbers. Those conditions are satisfied when the boundary points of the rotation interval belong to a Diophantine class. In particular they hold for Lebesgue almost every rotation interval. By standard results, the measure obtained is a global physical measure and it is hyperbolic.
\end{abstract}

\maketitle

\section{Introduction}

Let $f$ be a $C^r$ map of a compact interval (or the unit circle) to itself, for some $r \geq 1$. Such a map is called \emph{uniformly hyperbolic} (or \emph{Axiom A}) if it has a finite number of hyperbolic periodic attractors\footnote{A periodic point $p$ of period $n$ is called a \emph{hyperbolic attractor} if $\big|Df^n(p)\big|\in(0,1)$.} and the complement of the union of its basins of attraction, usually denoted by $\Sigma(f)$, is expanding\footnote{We say that $\Sigma(f)$ is \emph{expanding} under $f$ if there exist two constants $C>0$ and $\alpha>1$ such that $\big|Df^n(x)\big|>C\alpha^n$ for all $x\in\Sigma(f)$ and $n\in\nt$.}. The most tame examples of uniformly hyperbolic maps are the so-called \emph{Morse-Smale} diffeomorphisms, where $\Sigma(f)$ is just a finite union of repelling periodic orbits. On the other hand, $\Sigma(f)$ may be a Cantor set or even the whole phase-space (take, for instance, $z \mapsto z^n$ in the unit circle with $n \geq 2$).

Any Axiom A map for which all its critical points (if they exist) are non-degenerate and have disjoint orbits is \emph{structurally stable}: any $C^r$-nearby map is conjugate to it \cite[Section III.2, Theorem 2.5]{demelovanstrien}.

A major result in \emph{real} one-dimensional dynamics states that uniformly hyperbolic dynamics are (open and) dense in the space of $C^r$ maps of any given compact interval into itself, and for any given $r=1,2,...,\infty,\omega$ (see \cite{densidad} and the references therein). Actually even more is true: any real polynomial can be approximated by hyperbolic real polynomials of the same degree \cite[Theorem 1]{densidad}.

From the \emph{topological} viewpoint, therefore, \emph{most} one-dimensional dynamical systems are uniformly hyperbolic. By considering families parametrized by \emph{finite dimensional} manifolds, one can ask about most dynamical systems from a \emph{probabilistic} viewpoint, with respect to Lebesgue measure on parameter space (see also the recent \emph{global} probabilistic approach for circle diffeomorphisms considered in \cite{triestino}). With this purpose, we say that an interval map is \emph{stochastic} if it admits an invariant Borel probability measure which is absolutely continuous with respect to the Lebesgue measure.

It is not difficult to prove that if $f$ has critical points, is $C^{1+\alpha}$ and Axiom A, then its corresponding set $\Sigma(f)$ has zero Lebesgue measure (\cite[Section III.2, Theorem 2.6]{demelovanstrien}, see also Proposition \ref{frmap} in this paper). In this case, the support of any invariant Borel probability measure has zero Lebesgue measure and in particular no invariant measure of an Axiom A map with critical points can be absolutely continuous with respect to Lebesgue.

Let us illustrate this discussion with a classical example: for each $t\in(0,4)$ consider the quadratic polynomial $f_{t}:[0,1]\to[0,1]$ given by $f_{t}(x)=t\,x\,(1-x)$. This one-parameter family, the so-called \emph{quadratic family}, was introduced by R. May in 1976 \cite{may} as a model for the growth, or fluctuation, of biological populations. As explained above, there exists an open and dense set of parameters $t\in(0,4)$ such that the corresponding polynomials $f_{t}$ are Axiom A. For those maps, Lebesgue almost every point converges to a unique attracting periodic orbit (this includes the critical point).

However, in the early eighties, Jakobson \cite{jakobson} proved the existence of a \emph{positive} measure set of parameters $t\in(0,4)$ such that the corresponding maps are stochastic. Therefore, at least from the probabilistic viewpoint, stochastic dynamics are not negligible. Later, Lyubich proved that $f_{t}$ is either Axiom A or stochastic for a \emph{full} Lebesgue measure set of parameters $t\in(0,4)$ \cite{Lydicotomia}. Under suitable conditions, the same dichotomy holds for \emph{generic} one-parameter families of real analytic unimodal maps \cite{arturlyubichwelington}.

After Jakobson result, \emph{metric} conditions were shown to be sufficient for a smooth interval map to be stochastic (many papers have addressed this problem, see \cite{colleteckmann}, \cite{bencar}, \cite{bowen}, \cite{nowvs}, \cite{martensnow}, \cite{acipmultimodal} and also \cite[Chapter V]{demelovanstrien} and the references therein). Those conditions are usually related to the growth of the derivative along the critical orbit (see Remark \ref{remlargeder} below). In this paper we look for more \emph{combinatorial} conditions, instead of metric ones (see Theorem \ref{thmA} and Theorem \ref{thmB} below). With our approach we are able to give not only sufficient but also necessary conditions (see conditions \ref{C-} and \ref{C+} in Section \ref{statements} below) for a bimodal degree one circle endomorphism, such that all its periodic orbits are repelling and with irrational combinatorics of both critical points, to be stochastic (Theorem \ref{thmA}). Moreover, we provide a big class of maps satisfying those conditions (Theorem \ref{thmB}).

Before to explain our results formally we briefly review some basic definitions and statements. We refer to the book of de Melo and van Strien \cite{demelovanstrien} for general background in one-dimensional dynamics.

\subsection{Our setting}\label{setting} Let $\mathbb{T}^1$ be the circle and $\pi:\mathbb{R}\rightarrow\mathbb{T}^1$ its universal covering. The Lebesgue measure on $\mathbb{R}$ and $\mathbb{T}^1$ (the Haar measure) will be denoted by $\lambda$, and the usual distance on the circle by $\dd$. A map $f:\mathbb{T}^1\rightarrow\mathbb{T}^1$ is an \emph{endomorphism} of the circle if there exists a lift $\tilde f:\mathbb{R}\rightarrow\mathbb{R}$ which is continuous and satisfies: 
\begin{itemize}
\item $\pi\circ\tilde f=f\circ \pi,$
\item for any $x\in \mathbb{R}$, $\tilde f(x+1)=\tilde f(x)+1$.
\end{itemize}

To $\tilde f$ is associated a rotation set $\mathcal{R}(\tilde f)$, which is a compact subinterval $[\rho^-,\rho^+]$ of $\mathbb{R}$ (see~\cite{chencinergt,misiurewicz}).

\begin{definition}\label{ourspace} Let $\Bimod$ be the set of endomorphisms $f$ such that for some lift $\tilde f$ with rotation interval $[\rho^-,\rho^+]$ the following three properties are satisfied.

\begin{description}
\item[(A1)] The map $f$ is $C^2$ and bimodal: there exist $0<\tilde c^+<\tilde c^-<1$ such that the restrictions of $\tilde f$ on $(\tilde c^+,\tilde c^-)$ and $(\tilde c^-,\tilde c^++1)$ are respectively decreasing and increasing diffeomorphisms onto their image. The critical points $c^+=\pi(\tilde c^+)$ and 
$c^-=\pi(\tilde c^-)$ are non-flat: there exist some constant $\ell^-> 1$ (resp. $\ell^+> 1$) and some $C^2$-diffeomorphism $\psi^-$ (resp. $\psi^+$) from $\mathbb{R}$ into itself such that $\psi^-(c^-)=\psi^+(c^+)=0$ and such that near $\tilde c^-$ (resp. $\tilde c^+$),$$\tilde f=\tilde f(\tilde c^-)+|\psi^-|^{\ell^-} \text{ (resp. }\tilde f=\tilde f(\tilde c^+)-|\psi^+|^{\ell^+}).$$
\item[(A2)] The rotation numbers $\rho^-$ and $\rho^+$ are irrational.
\item[(A3)] All periodic orbits of $f$ are hyperbolic repelling:$$\forall\,x\in \mathbb{T}^1,\;\forall\,n\geq 1,\;f^n(x)=x\Rightarrow |\D f^n(x)|>1.$$
\end{description}
\end{definition}

One specifies also the subsets $\Bimod(\ell^-,\ell^+)$ when the constants $\ell^-,\ell^+$ that appear in (A1) have to be fixed. Let us remark also that both $\ell^-> 1$ and $\ell^+>1$ are real numbers, not necessarily integers. 

\begin{figure}[h]
\begin{center}
\begin{overpic}[scale=1.5]{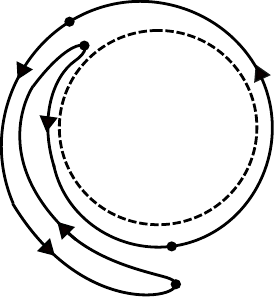}
\put(75, 60){$\mathbb{T}^1$}
\put(95, 60){$\mathbb{T}^1$}
\put(60, 10){$d^+$}
\put(60, -3){$c^+$}
\put(17, 93){$d^-$}
\put(30.5, 85.5){$c^-$}
\end{overpic}
\caption{Maps in $\Bimod$ are degree one branched coverings of the circle. They all have rich topological dynamics: they present periodic orbits of arbitrarily large period, they are topologically mixing in the whole circle (see Proposition \ref{topmix} in Appendix \ref{apptopasp}), they have positive topological entropy and they exhibit sensitive dependence to initial conditions.}
\label{figcovering}
\end{center}
\end{figure}


\begin{example} We give now some examples of maps belonging to the class $\Bimod$. Let $f$ be a bimodal map satisfying condition (A1) above. Suppose that $f$ is $C^3$ and that its Schwarzian derivative$$\s f=\frac{\D^3 f}{\D f}-\frac{3}{2}\left(\frac{\D^2 f}{\D f}\right)^2$$is strictly negative on $\mathbb{T}^1\setminus \{c^+,c^-\}$. A classical result of Singer (see \cite[Section II.6, Theorem 6.1]{demelovanstrien}) implies in this case that any non-repelling periodic orbit of $f$ has to be topologically attracting and, moreover, its immediate basin of attraction contains a critical point of $f$. However, if $\rho^-$ and $\rho^+$ are irrational numbers, the critical orbits cannot accumulate on any periodic orbit. Therefore the negative Schwarzian condition combined with conditions (A1) and (A2) imply condition (A3) in the $C^3$ category. For example, condition (A1) and the negative Schwarzian condition hold for the Arnol'd family (see \cite{arnold}):$$\tilde f_{a,\omega}(x)=x+a\sin(2\pi x)+\omega,\quad (a>1/2\pi,\; \omega\in\mathbb{R}).$$

In this case both critical points are non-degenerate ($\ell^-=\ell^+=2$). Moreover, any compact interval $[\rho^-,\rho^+]$ with non-empty interior is realized as the rotation interval of $\tilde f_{a,\omega}$ for some parameter $(a,\omega)$ (and $(a,\omega)$ is unique if $\rho^+-\rho^->2$ and $\rho^+,\rho^-\not \in\mathbb{Q}$, see~\cite{these}, Section 2.1).
\end{example}

\subsection{Basic definitions} The aim of this paper is to study invariant measures for functions $f\in\Bimod$.


\begin{definition}\label{deffis} An $f$-invariant borelian probability measure $\mu$ is a \emph{global physical measure} for $f$ if for Lebesgue almost every $x\in\cir$ we have:$$\lim_{n\to+\infty}\left\{\frac{1}{n}\sum_{j=0}^{n-1}\phi\big(f^j(x)\big)\right\}=\int_{\cir}\phi\,d\mu\quad\mbox{for any continuous function $\phi:\cir\to\R$.}$$
\end{definition} 

\begin{definition}\label{defhypmea} An ergodic $f$-invariant borelian probability measure $\mu$ is \emph{hyperbolic} if its Lyapunov exponent $\int_{\cir}\log |D f| \ddi \mu$ is strictly positive.
\end{definition}

\begin{definition}\label{dioph} A real number $\rho$ is \emph{diophantine} with exponent $\beta>0$ if there exists a constant $C>0$ such that for any rational number $\frac{p}{q}$,$$|q\rho-p|\geq C q^{-(1+\beta)}.$$
\end{definition}

\subsection{Statements of the main results}\label{statements} We fix a function $f\in\Bimod$ with rotation interval $[\rho^-,\rho^+]$. Let us consider the rational approximations $(\frac{p^-_k}{q^-_k})$ and $(\frac{p^+_k}{q^+_k})$ of $\rho^-$ and $\rho^+$ given by the continued fraction expansions. We will only use $q_k^-$ and $q_k^+$ and their precise definition will be recalled at Section~\ref{rotation}. However it is important here to note that with our definitions, for any $k\in\mathbb{N}$,$$\frac{p^-_{2k}}{q^-_{2k}}<\rho^-<\frac{p^-_{2k+1}}{q^-_{2k+1}},\quad\text{and}\quad\frac{p^+_{2k}}{q^+_{2k}}<\rho^+<\frac{p^+_{2k+1}}{q^+_{2k+1}}.$$

We introduce two conditions on $f$:
\begin{description}
\labitem{(C$^-$)}{C-} The series $\displaystyle\sum_{k\geq 0}q^-_{2k+1}\dd\big(f^{q^-_{2k}}(c^-),c^-\big)$ is finite.
\labitem{(C$^+$)}{C+} The series $\displaystyle\sum_{k\geq 1}q^+_{2k}\dd\big(f^{q^+_{2k-1}}(c^+),c^+\big)$ is finite.
\end{description}

The aim of this paper is to show that conditions \ref{C-} and \ref{C+} characterize stochastic dynamics in the class $\Bimod$. Indeed, our first main result is the following:

\begin{maintheorem}\label{thmA} An endomorphism $f\in\Bimod$ preserves a probability measure $\mu$ which is absolutely continuous with respect to the Lebesgue measure $\lambda$ if and only if conditions \ref{C-} and \ref{C+} are both satisfied.

When such an absolutely continuous measure $\mu$ exists, it is unique, equivalent to $\lambda$ and ergodic. In particular $\mu$ is a global physical measure for $f$. Moreover, $\mu$ is hyperbolic and has positive metric entropy.
\end{maintheorem}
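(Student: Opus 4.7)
The overall plan is to convert the combinatorial summability conditions \ref{C-} and \ref{C+} into a quantitative slow recurrence of the critical orbits, and then apply the inducing / Markov-extension technology standard in one-dimensional dynamics. The first observation is that under (A1)--(A2), the restriction of the dynamics to a one-sided neighborhood of each critical point is combinatorially modeled by the irrational rotation with angle $\rho^-$ (resp.\ $\rho^+$), so that the denominators $q^-_k$ (resp.\ $q^+_k$) are exactly the closest return times of the orbit of $c^-$ (resp.\ $c^+$) to itself. For a true rotation one has $\dd(f^{q^-_{2k}}(c^-),c^-)\sim 1/q^-_{2k+1}$; hence \ref{C-} is exactly the statement that the critical orbit of $c^-$ escapes a neighborhood of $c^-$ strictly faster than a rotation would, the bimodal circle analog of the classical Martens--Nowicki--van Strien slow-recurrence assumption, and similarly for \ref{C+}.

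\textbf{Sufficiency.} Assuming \ref{C-} and \ref{C+}, I would build an induced Markov map $F=f^{\tau}$ on a set $U\subset\cir$ obtained by removing a nested sequence of shrinking neighborhoods of $c^\pm$ determined by the closest-return combinatorics. Using (A1) and (A3) together with the Koebe principle on diffeomorphic branches of long iterates, one obtains real bounds for the nested dynamical intervals around $c^\pm$, and in particular uniformly bounded distortion for each branch of $F$. The conditions \ref{C-} and \ref{C+} then translate, thanks to the non-flatness exponents $\ell^\pm>1$, into the finiteness of $\sum_{\text{branches}}\tau\cdot|\text{branch}|$, so that a Folklore/Rychlik--Perron--Frobenius argument produces an $F$-invariant probability measure absolutely continuous with respect to $\lambda$. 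Inducing back to $f$ via the Kakutani skyscraper and using that the total $\tau$-integral is finite yields the desired ACIP $\mu$ for $f$.

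\textbf{Necessity.} Conversely, assume $\mu\ll\lambda$. Disintegrating along the closest-return towers near $c^\pm$ and applying Kac's lemma to the first-return map on a one-sided neighborhood of $c^\pm$, one gets lower bounds on $\mu$ of dynamical intervals of the form $[f^{q^\pm_{k}}(c^\pm),c^\pm]$ proportional to $q^\pm_{k+1}\cdot\dd(f^{q^\pm_k}(c^\pm),c^\pm)$. Summing in $k$ and using $\mu(\cir)=1$ forces \ref{C-} and \ref{C+}.

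\textbf{Additional conclusions and main obstacle.} Ergodicity and uniqueness of $\mu$ follow from topological mixing (Proposition~\ref{topmix}) combined with bounded distortion of the induced map; the density of $\mu$ is shown to be positive Lebesgue-a.e., so $\mu\sim\lambda$, and the global physical property is then Birkhoff applied to continuous test functions. Hyperbolicity follows because $\mu\ll\lambda$ while all periodic orbits are repelling; positive metric entropy then follows from the Ruelle--Margulis identity $h_\mu(f)=\int\log|Df|\,d\mu$, valid for ACIPs of $C^2$ one-dimensional maps. The delicate point I expect is the Sufficiency step: running the closest-return inducing construction \emph{simultaneously} for two critical points with independent irrational combinatorics, keeping the two scales decoupled, and establishing Koebe/real bounds along iterates that may pass near both critical points many times before returning to $U$. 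Matching each critical side with its own non-flatness exponent $\ell^\pm$ and the corresponding side-specific summability \ref{C-}/\ref{C+} without interference is the technical heart of the argument.
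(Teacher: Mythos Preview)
Your overall strategy---induce to a Markov map, invoke the folklore theorem, lift via a Kakutani tower---is exactly what the paper does. But there are two substantive differences worth noting, and one genuine gap in your necessity argument.

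\textbf{Choice of inducing domain and unconditional Markov structure.} The paper induces not on the complement of critical neighborhoods but on $\II=[c^+,c^-]$, the maximal interval where $f$ is decreasing. The point is that the first return map $T^0$ to $\II$ is shown to be a full-branch Markov map with bounded distortion for \emph{every} $f\in\Bimod$, with no appeal to \ref{C-} or \ref{C+} (Proposition~\ref{fonda}). The conditions \ref{C-}, \ref{C+} enter only as the characterization of summability of the return time $N^0$ (Proposition~\ref{fondb}). Separating these two facts is not merely a matter of taste: it is what makes the necessity direction work, and it also yields the statement that every $f\in\Bimod$ admits a $\sigma$-finite invariant measure equivalent to $\lambda$. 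Your construction, which builds the induced map already under \ref{C-}/\ref{C+}, obscures this. The technical machinery behind Proposition~\ref{fonda} (upper/lower maps $f_\pm$, extended maps $g_\pm$, primary decompositions of one-sided neighborhoods $A_L^+$, $A_R^-$, and iterated refinement) is precisely the answer to the ``main obstacle'' you flag: the two critical sides are decoupled by working with the semi-conjugate rotations separately on $A_L^+$ and $A_R^-$, then glued via the first-entry map to $A\supset\II$.

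\textbf{The gap in your necessity argument.} You propose to get lower bounds on $\mu$ of the intervals $[f^{q^\pm_k}(c^\pm),c^\pm]$ proportional to $q^\pm_{k+1}\cdot\dd(f^{q^\pm_k}(c^\pm),c^\pm)$ and sum. But to turn a Kac-type identity into such a \emph{lower} bound you need the density of $\nu$ to be bounded away from zero on those intervals, and this is not known a priori---indeed the density of an ACIP can vanish or blow up near critical points. The paper avoids this by using the \emph{induced} measure $\mu^\ast$ (which exists unconditionally and has density bounded above and below by the folklore theorem) rather than $\nu$ itself: from $\nu(\II)>0$ and Birkhoff for $(f,\nu)$ one extracts a set of positive Lebesgue measure on which the time-averages of $\chi_\II$ are positive; this translates to bounded Birkhoff averages of $N^0$ for $(T^0,\mu^\ast)$, forcing $\int N^0\,d\mu^\ast<\infty$, which by the bounded density of $\mu^\ast$ is equivalent to $\sum_{J\in\mathcal N^0} N^0(J)|J|<\infty$, hence to \ref{C-} and \ref{C+}. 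Your sketch would need to be rerouted through $\mu^\ast$ in this way.

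\textbf{Minor points.} Hyperbolicity in the paper comes directly from the induced map (Theorem~\ref{induce}\eqref{induce3}) and the tower formula $\int\log|Df|\,d\bar\mu=\int\log|DT^0|\,d\mu^\ast>0$, not from the general ``ACIP $+$ repelling periodic orbits'' principle you cite (which is true but heavier). Positive entropy then follows from Ledrappier's result, as you say.
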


From the proof of Theorem \ref{thmA} it will be clear however (see Section~\ref{pfs1}) that any map in $\Bimod$ preserves a $\sigma$-finite measure which is equivalent to the Lebesgue measure $\lambda$. Conditions \ref{C-} and \ref{C+} assert that this measure is finite.

As already mentioned in the abstract, the facts that the measure $\mu$ is a global physical measure and it is hyperbolic, follow at once after existence of $\mu$ is established (see Section \ref{pfs1} for more details and the corresponding references).

Note, finally, that both conditions \ref{C-} and \ref{C+} are \emph{quantitative}. In general it is not possible to give a \emph{topological} condition equivalent to the existence of an absolutely continuous invariant probability measure. Indeed, as Arnol'd showed in the early sixties (\cite{arnold}, see also \cite[Section I.5]{demelovanstrien}), there exist real analytic circle diffeomorphisms topologically conjugate to a rotation, which do not preserve any absolutely continuous invariant measure (see \cite{bruin} for examples in the class of unimodal maps).

J.~Graczyk has given in~\cite{jacekmanuscript} precise estimates on the distances $\dd(f^{q^-_{2k}}(c^-),c^-)$ and $\dd(f^{q^+_{2k+1}}(c^+),c^+)$. We will show that, together with Theorem \ref{thmA}, Graczyk's estimates imply that any bimodal endomorphism whose rotation interval satisfies a diophantine condition preserves an absolutely continuous probability measure. More precisely, our second main result is the following:

\begin{maintheorem}\label{thmB} For any constants $\ell^-,\ell^+>1$ there exists $\beta=\beta(\ell^-,\ell^+)>0$ with the following property: if $f\in\Bimod(\ell^-,\ell^+)$ is an endomorphism with rotation interval $[\rho^-,\rho^+]$ such that both $\rho^-$ and $\rho^+$ are Diophantine with exponent $\beta$, then $f$ preserves a probability measure which is absolutely continuous with respect to $\lambda$.

On the other hand, there exist Liouvillian numbers $\rho^-$ or $\rho^+$ such that if $f\in\Bimod(\ell^-,\ell^+)$ is an endomorphism with rotation interval $[\rho^-,\rho^+]$, then $f$ does not preserve any probability measure which is absolutely continuous with respect to $\lambda$.
\end{maintheorem}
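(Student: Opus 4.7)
The plan is to combine the characterization from Theorem \ref{thmA} with the quantitative estimates on critical orbit geometry due to Graczyk \cite{jacekmanuscript}. These provide explicit upper (and, in the Liouvillian regime, matching lower) bounds for the distances $\dd\bigl(f^{q^-_{2k}}(c^-), c^-\bigr)$ and $\dd\bigl(f^{q^+_{2k+1}}(c^+), c^+\bigr)$ in terms of the continued fraction denominators, with exponents depending in a controlled way on the criticalities $\ell^\pm$. Once such bounds are available, both parts of Theorem \ref{thmB} reduce to combinatorial bookkeeping inside the series defining \ref{C-} and \ref{C+}.

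For the positive part, I would use the standard fact from continued fraction theory: if $\rho$ is Diophantine with exponent $\beta$, then its convergent denominators satisfy $q_{k+1} \leq C(\rho)\, q_k^{1+\beta}$. Combined with Graczyk's upper estimate, which I write schematically as $\dd\bigl(f^{q^-_{2k}}(c^-), c^-\bigr) \leq C\,(q^-_{2k})^{-\alpha^-}$ with $\alpha^- = \alpha^-(\ell^-) > 1$, the general term of the series in \ref{C-} is bounded by $C'\,(q^-_{2k})^{1 + \beta - \alpha^-}$. Since convergent denominators grow at least exponentially in $k$, this series converges as soon as $\beta < \alpha^- - 1$; the symmetric argument treats \ref{C+}. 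Setting
\begin{equation*}
\beta(\ell^-, \ell^+) \;:=\; \tfrac{1}{2}\min\bigl(\alpha^-(\ell^-) - 1,\; \alpha^+(\ell^+) - 1\bigr) > 0,
\end{equation*}
Theorem \ref{thmA} then delivers the desired absolutely continuous invariant probability measure.

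For the negative part, one needs to realize prescribed rotation intervals with Liouvillian endpoints inside $\Bimod(\ell^-, \ell^+)$. A realization result of Arnol'd type for bimodal endomorphisms of arbitrary criticality (analogous in spirit to the statement recalled after Definition \ref{ourspace} for the classical Arnol'd family) produces, for any $[\rho^-,\rho^+]$ of sufficient width, an $f \in \Bimod(\ell^-, \ell^+)$ with this rotation interval. Starting from Graczyk's matching lower bound $\dd\bigl(f^{q^-_{2k}}(c^-), c^-\bigr) \geq c\,(q^-_{2k})^{-A^-}$, I would then select $\rho^-$ Liouvillian with partial quotients growing so fast that $q^-_{2k+1} \geq (q^-_{2k})^{A^- + 1}$ along a subsequence. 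The corresponding terms of the series in \ref{C-} would then be bounded below by a positive constant, forcing divergence and, via Theorem \ref{thmA}, ruling out any absolutely continuous invariant probability measure.

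The main technical obstacle is the accurate extraction and adaptation of Graczyk's exponents to the present bimodal circle setting. The combinatorial geometry of a bimodal circle map with two irrational rotation numbers is subtler than that of a single critical circle map, since the dynamics near the two critical points are intertwined through the rotation interval structure; in particular, the real bounds and distortion estimates must be derived separately along each branch, and one must verify that the resulting exponents $\alpha^\pm, A^\pm$ depend only on $\ell^\pm$ (and blow up only as $\ell^\pm \downarrow 1$) so as to produce a strictly positive $\beta(\ell^-, \ell^+)$. Once these bounds are in place, the remainder of the proof reduces to Theorem \ref{thmA} and elementary continued fraction arithmetic.
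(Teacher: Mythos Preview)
Your high-level plan---reduce to Theorem~\ref{thmA} and feed in Graczyk's geometric estimates---is exactly what the paper does. The gap is in the form you assume for those estimates. Graczyk's bounds (Theorem~\ref{thmgraczyk}) are \emph{not} polynomial in the denominators $q_k$. They control $|\log \dd(f^{q_{2k-1}^+}(c^+),c^+)|$ by products $\prod_{i\leq k}(1+c\,a_{2i+1}^+)$ over the \emph{odd} partial quotients, so the distance decays super-exponentially in the index $k$, with no reference to $q_k$ itself. In particular a uniform bound $\dd \leq C\,q_{2k}^{-\alpha}$ with $\alpha=\alpha(\ell)>1$ simply fails: take $a_{2i+1}^+=1$ for all $i$ and let the even quotients $a_{2i}^+$ be enormous; then the product is $(1+1/\ell^+)^{k}$, fixed in $k$, while $q_{2k}^+$ is arbitrarily large. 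For the same reason the lower bound $\dd\geq c\,q_{2k}^{-A}$ you invoke in the Liouville part is false in general.

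The paper's argument bypasses any polynomial comparison. For the positive part, the Diophantine hypothesis is used only to bound $\log q_n^+\leq C(1+\beta)^n$, and one then compares the two exponentials-in-$k$: the term $q_{2k}^+\,\dd(f^{q_{2k-1}^+}(c^+),c^+)$ has logarithm at most $C(1+\beta)^{2k}+\log\gamma\cdot(1+1/\ell^+)^{k-1}$, which tends to $-\infty$ super-exponentially provided $(1+\beta)^2<1+1/\ell^+$. This is where $\beta(\ell^-,\ell^+)$ comes from; your formula $\beta=\tfrac12\min(\alpha^\pm-1)$ has no grounding. For the negative part one must exploit the asymmetry in Graczyk's bound: keep the odd quotients equal to $1$ (so the product, and hence $|\log \dd|$, grows only like $(2\ell/(\ell-1))^k$) while making the even quotients grow like $\exp(k^k)$ (so $\log q_{2k}^+\geq k^k$); then the general term of \ref{C+} diverges. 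Your proposed choice ``$q_{2k+1}^-\geq (q_{2k}^-)^{A+1}$'' does not by itself control the relevant partial quotients entering Graczyk's lower bound.

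One further remark: your worry about adapting Graczyk's estimates to the bimodal setting is misplaced. Along the forward orbit of $c^+$ the map $f$ coincides with the upper map $f_+$, which is precisely a circle map with a flat interval---the setting of~\cite{jacekmanuscript}. No extra work is needed there.
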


We do not give an optimal arithmetic condition on $\rho^-$ and $\rho^+$. Note that the finer descriptions one could get are not symmetric with respect to the coefficients of the continued fraction representations of $\rho^-$ and $\rho^+$.

Theorem \ref{thmA} and Theorem \ref{thmB} describe the dynamics of bimodal circle endomorphisms for almost any rotation interval (recall that the set of Diophantine rotation numbers with exponent $\beta$ has full Lebesgue measure in $[0,1]$ for any $\beta>0$). However, \'Swi\c{a}tek has proved in~\cite{swiatek} that in the Arnol'd-like families the corresponding parameters $(a,\omega)$ have zero Lebesgue measure in $\mathbb{R}^2$.

In some way, our results can be compared to linearization theorems: for any smooth enough diffeomorphism of the circle with diophantine rotation number, M.~Herman proved in~\cite{hermanthesis} that the conjugacy $h$ to the rotation is a diffeomorphism. By pulling back the Lebesgue measure by $h$ one gets also an invariant probability measure which is equivalent to the Lebesgue measure.

\begin{remark}\label{remlargeder} As already mentioned, one of the first results that showed existence of absolutely continuous invariant measure for smooth one-dimensional maps with some recurrent critical point is certainly Jakobson's theorem in~\cite{jakobson}. In any proof one needs to avoid strong recurrence of the critical orbits near the critical points. This control is obtained here thanks to the combinatorics of the rotations with angles $\rho^-$ and $\rho^+$ which describe the forward orbits of $c^-$ and $c^+$ respectively.

More precisely, in \cite{acipmultimodal} it has been proved that if$$\lim_{n\to+\infty}\Big|Df^n\big(f(c^+)\big)\Big|=+\infty\quad\mbox{and}\quad\lim_{n\to+\infty}\Big|Df^n\big(f(c^-)\big)\Big|=+\infty\,,$$then $f\in\Bimod$ admits an absolutely continuous invariant probability measure. We do not know whether those \emph{large derivatives} conditions follow from conditions \ref{C-} and \ref{C+} in the $\Bimod$ class (recall that the existence of an absolutely continuous invariant probability measure does not imply positive Lyapunov exponent at the critical value, see \cite{Lyu-Mil}).
\end{remark}

The proof of Theorem \ref{thmA} relies on a classical method of \emph{inducing}, developed during the sixties and seventies by Adler, Weiss, Bowen, Jakobson and Sinai among others (see \cite[Section V.3]{demelovanstrien} and the references therein): given $f\in\Bimod$ we will consider the maximal closed interval $\II$ where $f$ is decreasing (see Figure \ref{fig1bimod}). We will prove in Section \ref{markovprop} that Lebesgue almost every point of $\mathbb{T}^1$ enters $\II$ under the action of $f$, and that this \emph{first entry map} is a Markov map (see Definition \ref{markovdef} in Section \ref{markovprop}). A classical result in one-dimensional dynamics (the \emph{folklore} theorem) assures that this Markov map preserves a probability measure which is equivalent to Lebesgue, ergodic and hyperbolic (see Theorem \ref{induce}). This Markovian structure holds for any map in $\Bimod$. We will prove in Section~\ref{mainthm} that one can \emph{lift} this Markov measure to a \emph{finite} invariant measure for $f$ if and only if conditions \ref{C-} and \ref{C+} hold.

The proof of Theorem \ref{thmB} is given in Section \ref{provadoB}, and relies on precise estimates on the distances $\dd\big(f^{q^-_{2k}}(c^-),c^-\big)$ and $\dd\big(f^{q^+_{2k-1}}(c^+),c^+\big)$ obtained by Graczyk in \cite{jacekmanuscript}, see Theorem \ref{thmgraczyk}.

\subsection{Organization of the paper} Basic constructions, combinatorics of rotation, upper maps and extended upper maps are described in Section~\ref{preliminaries}. Section~\ref{induces} contains the definition of first return maps to different intervals and Section~\ref{distprop} its distortion properties. In Section~\ref{markovprop}  we prove that the first return map to the interval where the function is decreasing is a Markov map and that conditions \ref{C-} and \ref{C+} imply that the return time is summable (Section~\ref{sumcond}). We prove the main theorems in Section~\ref{mainthm}.

\textbf{Ackowledgements} The first author would like to thanks J.~Graczyk, D.~Sands and J.-C. Yoccoz for their advices and their support. The third author was supported by funds allocated to the implementation of the international co-funded project in the years 2014-2018, 3038/7.PR/2014/2, and by the EU grant PCOFUND-GA-2012-600415. During the preparation of this article, S.C. visited IMPAN and L.P. visited PUC-Rio. We wish to thank both institutions for their warm hospitality.

\section{Preliminaries}\label{preliminaries}

\subsection{Notations and Definitions} 
We introduce some basic notation used along this paper.
\begin{itemize}
\item For any topological space $X$, the interior, closure and boundary of a subset $Y\subset X$ will be denoted by $\interior(Y)$, $\closure(Y)$ and $\boundary(Y)$.
\item The integer part of a real number $x$ is $[x]$ (thus $[x]\leq x<[x]+1$).
\item The rotation by $\rho$ on $\mathbb{T}^1$ is denoted by $R_\rho$.
\item 
 Let $x$ and $y$ be two points on $\mathbb{T}^1$. One can find some lifts $\tilde x$ and $\tilde y$ in $\mathbb{R}$ with $\tilde x\leq \tilde y\leq \tilde x+1$. Then, one defines the interval $[x,y]$ as the interval 
 $\pi([\tilde x,\tilde y])$. It does not depends on the choice of the lifts $\tilde x$ and $\tilde y$. In the same way, one defines the intervals $(x,y)$, $[x,y)$ and $(x,y]$.
\end{itemize}
 
\begin{definition}\label{premdef}
\renewcommand{\theenumi}{\roman{enumi}}
\begin{enumerate}
\item An \textbf{interval} $I$ of $\mathbb{T}^1$ is a connected subset of $\mathbb{T}^1$. We denote by $|I|$ its length, i.e. its measure with respect to the Lebesgue measure $\lambda$.
\item Two intervals $I,I'$ of $\mathbb{T}^1$ are \textbf{adjacent} if$$\interior(I)\cap\interior(I')=\emptyset,\text{ and }\closure(I\cup I') \text{ is an interval.}$$
\item Let $\mathcal{N}$ be a family of intervals of $\mathbb{T}^1$. A map $N:\mathcal{N}\rightarrow\mathbb{N}$ associated to this family is \textbf{summable} if the following quantity is finite:$$\sum_{J\in\mathcal{N}}N(J)|J|.$$
\item A family $\mathcal{N}$ of intervals of $\mathbb{T}^1$ is a \textbf{measurable partition} of an interval $T\subset \mathbb{T}^1$ if:
\begin{itemize}
\item for any $J\in\mathcal{N}$, $\interior(J)\not=\emptyset$, $\interior(J)\subset T$ and for any $J,J'\in\mathcal{N}$,$$J\cap J'\not =\emptyset\Rightarrow J=J';$$
\item $\lambda\left(T\setminus \bigcup_{J\in\mathcal{N}} J\right)=0$.
\end{itemize}
\item For any two disjoint sets $X$ and $Y$ contained in $\mathbb{R}$ (or contained in a same interval $I\varsubsetneq\mathbb{T}^1$ that has to be specified), the notation $X<Y$ will stand for:$$\forall x\in X, \; \forall y\in Y,\quad x<y.$$
\item On any interval $I$ of $\mathbb{T}^1$ which is not the full circle $\mathbb{T}^1$, one defines an order in the following way: one chooses any lift $\tilde I$, i.e. an interval in $\mathbb{R}$ such that $\pi:\tilde I\rightarrow I$ is an homeomorphism. The order on $I$ is obtained by identification with the usual order on $\tilde I$. It does not depends on the choice of the lift $\tilde I$.
\end{enumerate}
\end{definition}

\subsection{Upper maps}\label{extenint}  Let $f\in\Bimod$. We refer to the notation introduced in Subsection \ref{setting} and we set:

\begin{itemize}
\item $\tilde \II=[\tilde c^+,\tilde c^-]$ and $\II=\pi(\tilde \II)$ the maximal closed interval where $f$ is decreasing,
\item ${\tilde \II}^+=[\tilde c^+,\tilde d^+]$ and $\II^+=\pi(\tilde \II^+) $, and
\item $\tilde \II^-=[\tilde d^-,\tilde c^-]$ and $\II^-=\pi(\tilde \II^-)$,
\end{itemize}
where $\tilde d^+\in(\tilde c^-,\tilde c^++1)$ is defined by $\tilde f(\tilde d^+)=\tilde f(\tilde c^+)$ (see Figures \ref{figcovering} and \ref{fig1bimod}).

\begin{figure}[h]
\begin{center}
\begin{overpic}[scale=0.4]{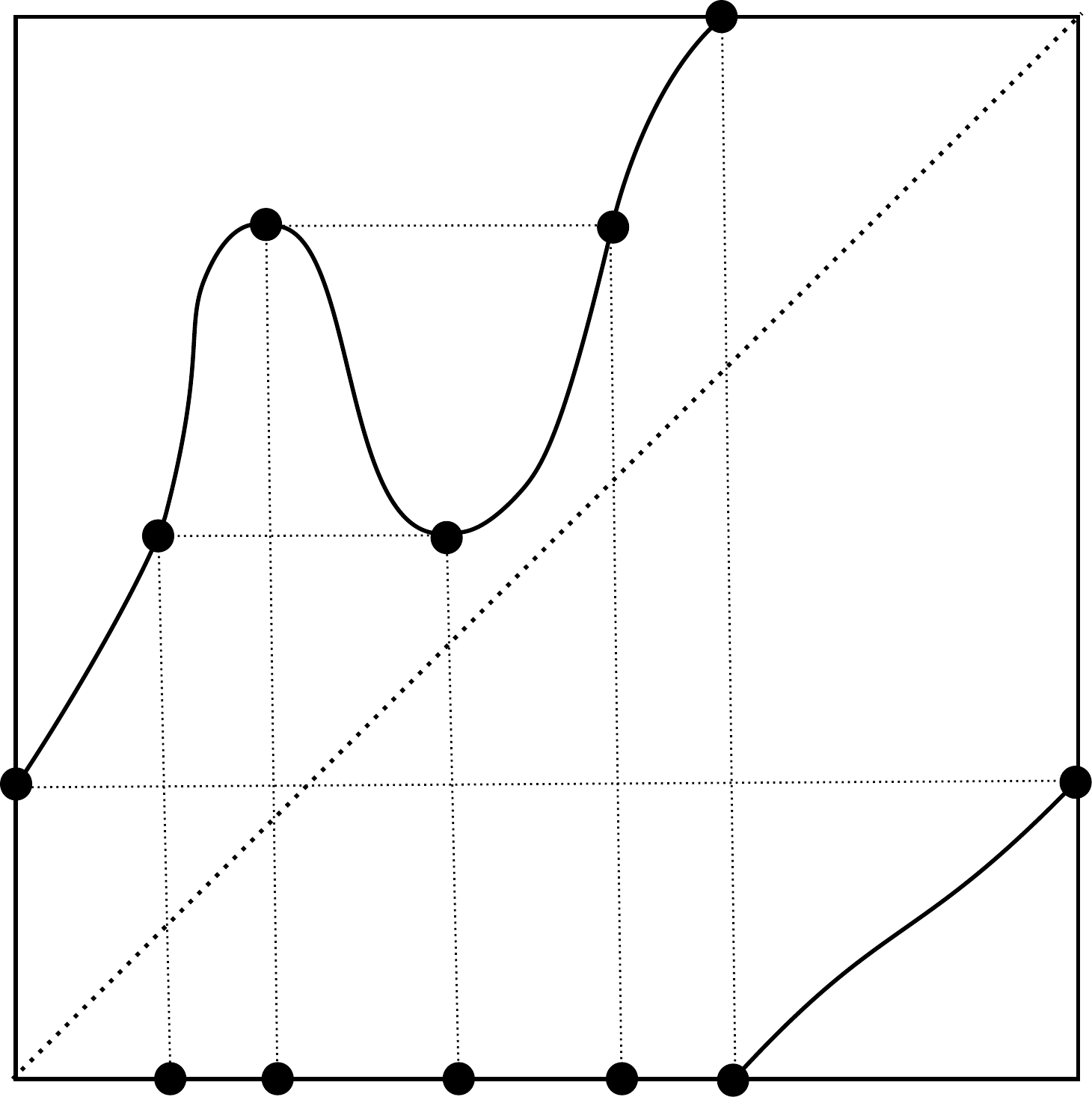}
\put(8, 60){$f$}
\put(33, 3){$\mathbb{I}$}
\put(13, -5,5){$d^-$}
\put(23, -5,5){$c^+$}
\put(40, -5,5){$c^-$}
\put(55, -5,5){$d^+$}
\end{overpic}
\medskip
\caption{The interval $\II$ is determined by the critical points $c^+$ and $c^-$. The points $d^+$ and $d^-$ correspond to the other preimage of each critical value.}
\label{fig1bimod}
\end{center}
\end{figure}

Many authors~\cite{boyland, chencinergt, misiurewicz} have shown that the upper rotation number $\rho^+$ of $\tilde f$ is equal to the rotation number of an endomorphism, the upper map $\tilde f_+$, defined by:
\begin{align*}
\tilde f_+(x)&=
\begin{cases}
\tilde f(\tilde c^+), \quad \text{if }
x\in (\tilde c^+,\tilde d^+),\\
\tilde f(x), \quad \text{if } x\in [\tilde d^+,\tilde c^++1],
\end{cases}\\
\tilde f_+(x+1)&=\tilde f_+(x)+1,
\quad \text{for any } x\in\mathbb{R}.
\end{align*}

\begin{figure}[h!]
\begin{center}
\begin{overpic}[scale=0.4]{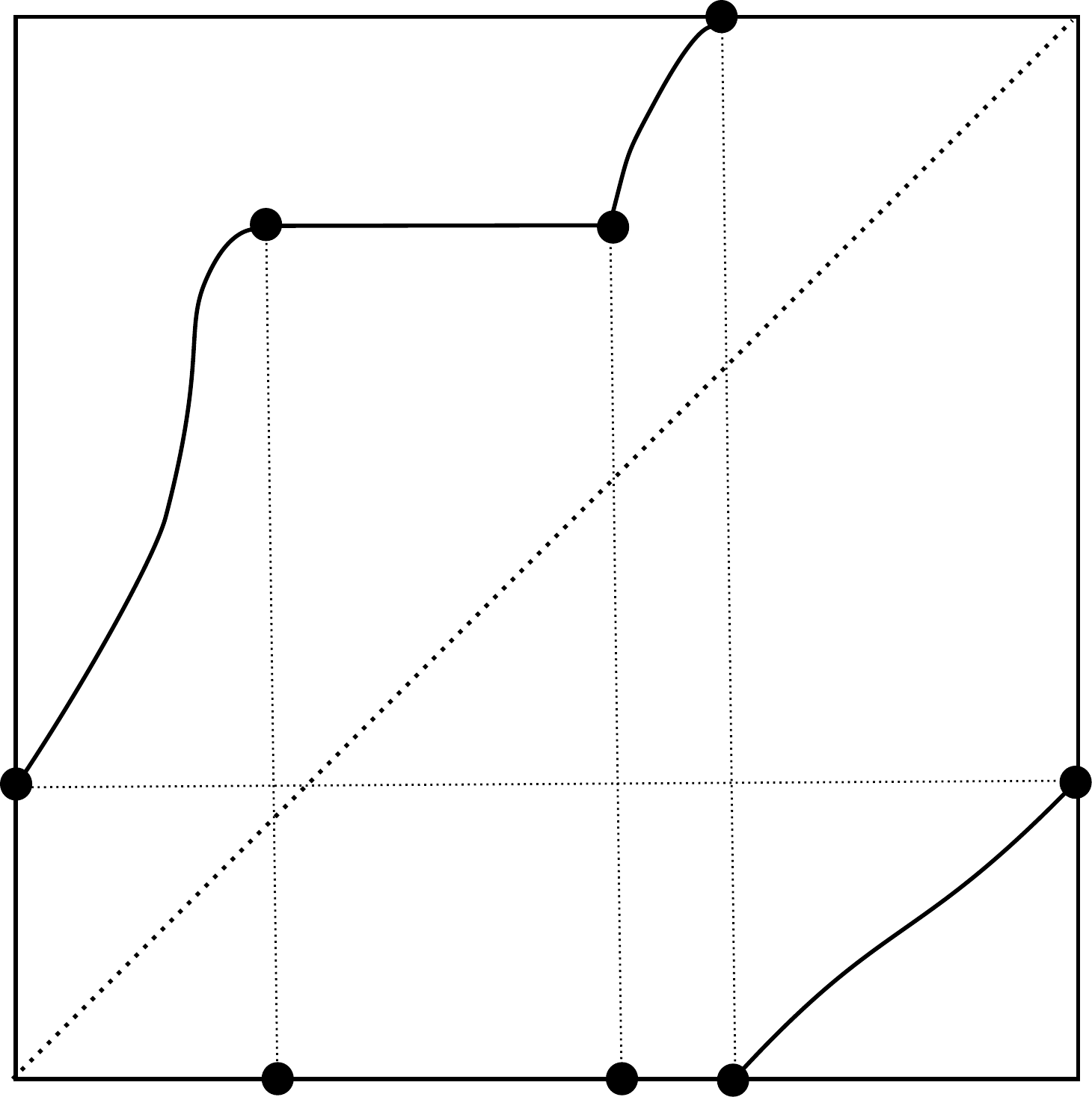}
\put(8, 60){$f_+$}
\put(23, -5,5){$c^+$}
\put(55, -5,5){$d^+$}
\end{overpic}
\medskip
\caption{The rotation number of the upper map $f_+$ equals $\rho^+$.}
\label{figupper}
\end{center}
\end{figure}

\subsubsection{}\label{bonordres} We get a continuous endomorphism $f_+$ on $\mathbb{T}^1$ (see Figure \ref{figupper}). It is constant on the interval $\II^+$. Since the rotation number $\rho^+$ is irrational, all the iterates $f_+^n(\II^+)$ for $n\in\mathbb{Z}$ are disjoint. The map $\tilde f_+$ is non-decreasing and the orbit of the interval $\tilde \II^+$ by $\tilde f_+$ is ordered as the orbits of the rotation with angle $\rho^+$:
\begin{equation}\label{bonordre}
\forall i,j,k\in\mathbb{Z},\quad\tilde f_+^i(\tilde \II^+)<\tilde f_+^j(\tilde \II^+)+k\Leftrightarrow(i-j)\rho^+<k.
\end{equation}

\subsubsection{} In the same way, one defines an increasing lower map $\tilde f_-$ whose rotation number is $\rho^-$. It is constant on the interval $\tilde \II^-$. 

\subsection{Continued fractions and combinatorics of rotations}\label{rotation} We recall some well known facts on rotations (see for example~\cite{hermanthesis}, chap. V).

\subsubsection{} The coefficients $(a_k^+)_{k\in\mathbb{N}}$ in the continued fraction representation of $\rho^+$ are defined by:
\begin{align*}
a_0^+&=[\rho^+],\quad &\rho_0^+=\rho^+-a_0^+,\\
a_k^+&=\left[\frac{1}{\rho_{k-1}^+}\right],\quad
&\rho^+_k=\frac{1}{\rho_{k-1}^+}-a_k^+.
\end{align*}

\subsubsection{} One associates to $\rho^+$ its approximations $\left(\frac{p_k^+}{q_k^+}\right)$. The numbers $(q_k^+)$ are defined by the following recurrence relations:
\begin{align*}
q_{-1}^+&=0,\quad q_0^+=1,\\
q_{k+1}^+&=a_{k+1}^+q_{k}^++q_{k-1}^+,\quad \text{for any } k\in\mathbb{N}.
\end{align*}
For any orbit of the rotation $R_{\rho^+}$, the integers $q_k^+$, $k\geq 1$ are the times when the orbit makes the closest return so far to the starting point. Since the map $f_+$ is semi-conjugated to the rotation $R_+=R_{\rho^+}$ by an increasing endomorphism of the circle, the same property holds for $f_+$.\\

\paragraph{\it Notation.} On the remainder of this paper, when there is no chance of confusion, we will omit the symbol $+$ for the sequences $(a_k^+)_{k\in\mathbb{N}}$ and $\left(\frac{p_k^+}{q_k^+}\right)$. We will simply denote them by $(a_k)_{k\in\mathbb{N}}$ and $\left(\frac{p_k}{q_k}\right)$ respectively.

\begin{proposition}[see~\cite{hermanthesis}, Section V.8]\label{bbo} For any $k\geq 0$, let $I$ be the interval $(f^{-q_k}_+(c^+),c^+)$ if $k$ is even and $(d^+,f^{-q_k}_+(c^+))$ if $k$ is odd. Then $n=q_{k+1}$ is the first time, larger that $-q_k$, the point $f^n_+(c^+)$ returns inside $I$.
\end{proposition}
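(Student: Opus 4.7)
The plan is to reduce the proposition to the classical closest-return property of the denominators $q_k$ of the continued fraction expansion of $\rho^+$, applied to the pure rotation $R_{\rho^+}$. Since $\rho^+$ is irrational and $\tilde f_+$ is a non-decreasing degree-one lift, there exists an increasing continuous degree-one semi-conjugacy $h:\mathbb{T}^1\to\mathbb{T}^1$ with $h\circ f_+=R_{\rho^+}\circ h$. Setting $y_0:=h(c^+)$, the ordering (\ref{bonordre}) together with the standard description of such semi-conjugacies identifies the wandering intervals $f_+^n(\II^+)$, $n\in\mathbb{Z}$, with the non-trivial fibres of $h$, each fibre being crushed to $R_{\rho^+}^n(y_0)$. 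In particular $h(c^+)=h(d^+)=y_0$ and $h(f_+^n(c^+))=R_{\rho^+}^n(y_0)$ for every $n\in\mathbb{Z}$.

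Next I would transport the data of the proposition through $h$. For $k$ even, $h$ sends the interval $I=(f_+^{-q_k}(c^+),c^+)$ onto $J_k:=(R_{\rho^+}^{-q_k}(y_0),y_0)$; for $k$ odd, $h$ sends $I=(d^+,f_+^{-q_k}(c^+))$ onto $J_k:=(y_0,R_{\rho^+}^{-q_k}(y_0))$. Because $h$ is monotone and collapses exactly the wandering intervals, and because no iterate $f_+^n(c^+)$ with $-q_k<n\neq 0$ can lie in the interior of another wandering interval (by the disjointness in (\ref{bonordre})), I obtain the equivalence $f_+^n(c^+)\in I\iff R_{\rho^+}^n(y_0)\in J_k$. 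The proposition is thus reduced to showing that for the rotation $R_{\rho^+}$, the smallest integer $n>-q_k$ with $R_{\rho^+}^n(y_0)\in J_k$ is $n=q_{k+1}$.

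This last assertion is the classical best-approximation property of the denominators $q_k$. Writing $\delta_j:=q_j\rho^+-p_j$, one has $(-1)^j\delta_j>0$, the sequence $|\delta_j|$ is strictly decreasing, and the continued-fraction recursion translates into the fact that for $0<m<q_{k+1}$ no integer $\ell$ satisfies $|m\rho^+-\ell|<|\delta_k|$ with $m\rho^+-\ell$ of sign opposite to $\delta_k$. Symmetrically, for $0<j<q_k$ the best approximation with $j\rho^+-\ell$ of the same sign as $\delta_k$ has absolute value strictly larger than $|\delta_k|$, which rules out the backward indices $-q_k<n<0$. Geometrically this is exactly the statement that the iterates $R_{\rho^+}^n(y_0)$ with $-q_k<n<q_{k+1}$ avoid $J_k$ while $R_{\rho^+}^{q_{k+1}}(y_0)$ enters it; this is the content of Herman~\cite{hermanthesis}, Section V.8.

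The only point where real care is needed is matching the orientations of $J_k$ and $I$ in the two cases $k$ even and $k$ odd: this is dictated by the alternating sign of $\delta_k$, which places $R_{\rho^+}^{-q_k}(y_0)$ just to the left of $y_0$ when $k$ is even and just to the right when $k$ is odd, and combined with the collapse $h(c^+)=h(d^+)=y_0$ this matches the two forms of $I$ declared in the statement.
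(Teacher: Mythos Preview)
The paper does not give its own proof of this proposition: it is stated with the reference ``see~\cite{hermanthesis}, Section V.8'' and used as a black box. Your approach---pushing everything through the monotone semi-conjugacy $h$ to the rigid rotation $R_{\rho^+}$ and then invoking the classical best-approximation property of the convergents $p_k/q_k$---is precisely the content of that citation, and is correct. The paper itself uses the same reduction implicitly elsewhere (e.g.\ in the proof of Corollary~\ref{bound-q-n} and Proposition~\ref{rotationp}, where it says ``it is sufficient to prove the proposition for the rotation $R_+$''), so your argument is entirely in the spirit of the paper.

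One small point worth tightening: the equivalence $f_+^n(c^+)\in I \iff R_{\rho^+}^n(y_0)\in J_k$ requires knowing that $h$ does not collapse any part of the \emph{boundary} of $I$ onto $R_{\rho^+}^n(y_0)$ for the relevant $n$. You handle this by noting that the only fibres of $h$ with non-trivial interior are the wandering intervals $f_+^m(\II^+)$, and that $R_{\rho^+}^n(y_0)$ can coincide with a boundary value of $J_k$ only when $n\in\{0,-q_k\}$. This is correct, but it is the one place where the open-interval conventions in the statement (and the fact that $h(c^+)=h(d^+)$) matter, so it deserves the explicit sentence you gave it.
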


\begin{corollary}\label{bound-q-n} For any $k\geq 0$ one has $|q_k\alpha-p_k|<q_{k+1}^{-1}$.
\end{corollary}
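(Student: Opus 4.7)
The plan is to derive this inequality from a classical continued-fraction identity, using Proposition~\ref{bbo} only to settle the signs. First I would prove by induction on $k$, using the recurrences $q_{k+1}=a_{k+1}q_k+q_{k-1}$ and $p_{k+1}=a_{k+1}p_k+p_{k-1}$ (together with the initial data $p_{-1}=1,q_{-1}=0,p_0=a_0,q_0=1$), the standard Wronskian identity
$$p_k q_{k+1}-p_{k+1}q_k=(-1)^{k+1}.$$
Writing this in terms of the ``error'' $x_j:=q_j\alpha-p_j$ gives
$$q_{k+1}\,x_k-q_k\,x_{k+1}=(-1)^{k+1}.$$

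Next I would check that $x_k$ and $x_{k+1}$ have opposite signs, which is where Proposition~\ref{bbo} enters. Via the semi-conjugacy $h:\mathbb{T}^1\to\mathbb{T}^1$ between $f_+$ and the rotation $R_\alpha$ (with $h(c^+)=h(d^+)=0$), the interval $I$ of Proposition~\ref{bbo} corresponds to $(R_\alpha^{-q_k}(0),0)$ for $k$ even and to $(0,R_\alpha^{-q_k}(0))$ for $k$ odd. Since $R_\alpha^{-q_k}(0)$ coincides with $-x_k$ modulo $1$, this places $-x_k$ on opposite sides of $0$ according to the parity of $k$, so the sign of $x_k$ indeed alternates with $k$.

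Combining the two ingredients, the opposite signs of $x_k$ and $x_{k+1}$ convert the subtraction in the identity above into a sum after taking absolute values:
$$q_{k+1}|x_k|+q_k|x_{k+1}|=1.$$
Since $\alpha$ is irrational, $|x_{k+1}|>0$, and the strict inequality $q_{k+1}|x_k|<1$ follows at once.

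The only non-mechanical point is the sign alternation, and even that is visibly built into the bookkeeping of Proposition~\ref{bbo}; I do not anticipate a serious obstacle. An alternative, more geometric proof would try to show directly that the $q_{k+1}$ translates $R_\alpha^n(I_k)$, $0\le n<q_{k+1}$, have pairwise disjoint interiors and hence $q_{k+1}|I_k|\le 1$. This is attractive but requires simultaneously invoking Proposition~\ref{bbo} for the indices $k-1$, $k$, and $k+1$ in order to rule out overlaps occurring at shifts $m\in[q_k,q_{k+1})$, which is why the algebraic route seems cleaner.
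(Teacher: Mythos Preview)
Your algebraic route via the determinant identity is correct and gives the strict inequality cleanly once you know $|x_{k+1}|>0$. (One harmless slip: from $p_kq_{k+1}-p_{k+1}q_k=(-1)^{k+1}$ you actually get $q_{k+1}x_k-q_kx_{k+1}=(-1)^{k}$, not $(-1)^{k+1}$; this does not affect the conclusion after taking absolute values.) Reading the sign alternation of $x_k$ off the statement of Proposition~\ref{bbo} is legitimate, though it is more elementary than that proposition and follows directly from the continued-fraction recurrence.

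The paper, however, takes precisely the geometric route you set aside in your last paragraph: it notes that the interval $J$ of length $|q_k\alpha-p_k|$ bounded by $0$ and $R_\alpha^{q_k}(0)$ is disjoint from its first $q_{k+1}-1$ iterates, so $q_{k+1}|J|<1$. Your concern that this needs Proposition~\ref{bbo} at several indices is unfounded. Since $J=R_\alpha^{q_k}(I)$ with $I$ the interval of Proposition~\ref{bbo} at index $k$, the conclusion ``$R_\alpha^{n}(0)\notin I$ for $-q_k<n<q_{k+1}$'' translates into ``$R_\alpha^{m}(0)\notin J$ for $0<m<q_k+q_{k+1}$'', and this single statement already excludes \emph{both} endpoints $R_\alpha^{n}(0)$ and $R_\alpha^{n+q_k}(0)$ of $R_\alpha^{n}(J)$ from $J$ for every $1\le n\le q_{k+1}-1$; disjointness follows since two arcs of equal length that overlap without coinciding must each contain an endpoint of the other. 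So the paper's proof is a one-line application of Proposition~\ref{bbo}. Your argument is more self-contained and delivers strictness for free; the paper's is shorter and stays inside the dynamical picture already in place.
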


\begin{proof}[Proof of Corollary \ref{bound-q-n}] One considers the rotation $R_+$, whose orbits are ordered as for those of $f_+$.
The interval of length $|q_k\alpha-p_k|$ bounded by $0$ and $R^{q_k}(0)$ is disjoint from its $q_{k+1}-1$ first iterates
from the Proposition~\ref{bbo}. This concludes.
\end{proof}

\subsubsection{} Sometimes we will be mainly interested in the closest returns to the left (or to the right) of $c^+$: they are obtained for times
\begin{align}
&q_{2k-1}+lq_{2k},\; 0< l\leq a_{2k+1},\label{gauche}\\
\text{(or }
&q_{2k}+lq_{2k+1},\; 0\leq l< a_{2k+2}),
\text{ with } k\in\mathbb{N}.\label{droite}
\end{align}

The closest returns to the left (resp. to the right) for the backward iterates are obtained for times of the form~(\ref{droite}) (resp.~(\ref{gauche})). More precisely,

\begin{proposition}\label{rotationp} Let $t<t'$ be two successive times of the form given by~(\ref{droite}) (resp.~(\ref{gauche})). Then for any $0<n<t'$, the point $f_+^{-n}(c^+)$ is not contained in $(f^{-t}_+(c^+),c^+)$, (resp. $(d^+,f_+^{-t}(c^+))$).
\end{proposition}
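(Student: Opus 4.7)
The plan is to reduce the statement to a combinatorial fact about rotations and derive it from Proposition~\ref{bbo}. Whether $f_+^{-n}(c^+)$ belongs to $(f_+^{-t}(c^+),c^+)$ depends only on the cyclic order of the orbit $\{f_+^j(c^+)\}_{j\in\Z}$, which by \eqref{bonordre} coincides with the order of $\{R^j(0)\}_{j\in\Z}$ for $R=R_{\rho^+}$. Setting $\alpha=\rho^+$ and $\epsilon_j=|q_j\alpha-p_j|$ (so that $q_{2j}\alpha-p_{2j}=\epsilon_{2j}>0$ and $q_{2j+1}\alpha-p_{2j+1}=-\epsilon_{2j+1}<0$), one has $\{t\alpha\}=\epsilon_{2k}-l\epsilon_{2k+1}>0$ for $t=q_{2k}+lq_{2k+1}$ with $0\le l<a_{2k+2}$. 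The condition $R^{-n}(0)\in(R^{-t}(0),0)$ then translates into $\{n\alpha\}\in(0,\{t\alpha\})$, so the proposition reduces to showing that no $0<n<t'$ satisfies this.

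I would argue by induction on $(k,l)$ in lexicographic order. The base case $(k,l)=(0,0)$ gives $t=q_0=1$ and $t'=1+a_1$, and $\{n\alpha\}=n\alpha\ge\alpha=\{t\alpha\}$ for every $1\le n\le a_1$. For the inductive step, suppose $\{n\alpha\}\in(0,\{t\alpha\})$ for some $0<n<t'$ and distinguish two cases. If $n<t$, let $t_0$ denote the immediate predecessor of $t$ in the sequence~\eqref{droite} (so that $t_0'=t$ and $\{t_0\alpha\}>\{t\alpha\}$); the inductive hypothesis applied at $t_0$ forbids $\{n\alpha\}\in(0,\{t_0\alpha\})$, which contains $(0,\{t\alpha\})$, a contradiction. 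If $t<n<t'$, write $n=t+s$ with $0<s<t'-t=q_{2k+1}$; then the assumption provides an integer $s'$ with $0<s'-s\alpha<\{t\alpha\}\le\epsilon_{2k}$. But Proposition~\ref{bbo} applied at the even index $2k$ says that for any $0<s<q_{2k+1}$ the fractional part $\{s\alpha\}$ avoids the interval $(1-\epsilon_{2k},1)$, so the next integer above $s\alpha$ is at distance at least $\epsilon_{2k}$ from $s\alpha$, contradicting the existence of $s'$.

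The parenthetical (gauche) case, for times $t=q_{2k-1}+lq_{2k}$ and the interval $(d^+,f_+^{-t}(c^+))$, follows by the same argument: a similar translation yields the equivalent condition $\{n\alpha\}\in(\{t\alpha\},1)$, and the decisive case~B reduces, after an analogous computation, to the assertion that for $0<s<q_{2k}$ the fractional part $\{s\alpha\}$ avoids $(0,\epsilon_{2k-1})$, which is precisely Proposition~\ref{bbo} applied at the odd index $2k-1$.

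The main obstacle is not conceptual but combinatorial: one must carefully track the signs $\pm(q_j\alpha-p_j)$, the parities of indices, and the linkage between successive semiconvergents of different parities. Once these are sorted out, the decisive step in each case reduces to a single invocation of Proposition~\ref{bbo}.
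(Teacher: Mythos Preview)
Your proof is correct and follows the same underlying strategy as the paper: reduce to the rotation via the semiconjugacy~\eqref{bonordre} and invoke Proposition~\ref{bbo} to rule out intermediate returns. The paper's version is organized geometrically --- it first establishes the monotone ordering $R_+^{-q_{2k}}(0)<R_+^{-q_{2k}-q_{2k+1}}(0)<\cdots<R_+^{-q_{2k+2}}(0)<0$ and then pulls back the statement of Proposition~\ref{bbo} by $R_+^{q_{2k}+lq_{2k+1}}$ --- whereas yours is organized arithmetically via fractional parts and an explicit lexicographic induction on $(k,l)$. A small payoff of your framework is that the case ``$n<t$'' of your inductive step makes the treatment of small $n$ (namely $0<n\le q_{2k}$) explicit through the predecessor $t_0$, while the paper's argument for a fixed $k$ directly treats only $n>q_{2k}$ and leaves the smaller $n$ to an implicit recursion on $k$.
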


\begin{proof} As above it is sufficient to prove the proposition for the rotation $R_+$. Let us fix some $k\in\mathbb{N}$ and let $T$ be the interval $[0,R_+^{-2q_{2k+1}}(0)]$. One knows that it contains $R_+^{-q_{2k+1}}(0)$. If $a_{2k+2}\geq 2$ one pulls back $T$ by $R_+^{q_{2k}+(l-1)q_{2k+1}}$ for $0<l<a_{2k+2}$ and gets that $$R_+^{-q_{2k}-lq_{2k+1}}(0)\in(R_+^{-q_{2k}-(l-1)q_{2k+1}}(0),R_+^{-q_{2k}-(l+1)q_{2k+1}}(0)).$$

This shows that in $[R_+^{-q_{2k}}(0),0]$,
\begin{equation*}
\begin{split}
R_+^{-q_{2k}}(0)&<
R_+^{-q_{2k}-q_{2k+1}}(0)<
R_+^{-q_{2k}-2q_{2k+1}}(0)<\cdots\\
\cdots &<R_+^{-q_{2k}-a_{2k+2}q_{2k+1}}(0)=R_+^{-q_{2k+2}}(0)<0.
\end{split}
\end{equation*}

(And this is true again in the case $a_{2k+2}=1$.) This is now enough to prove that for any $0\leq l<a_{2k+2}$ and $0<m<q_{2k+1}$,$$R_+^{-q_{2k}-lq_{2k+1}-m}(0)\not\in(R_+^{-q_{2k}-lq_{2k+1}}(0),0).$$

Proposition~\ref{bbo} shows for the rotation $R_+^{-1}$ that$$\forall 0<{\color{blue} m}<q_{2k+1},\quad R_+^m(0)\not\in (0,R_+^{q_{2k}}(0)).$$

Pulling back by $R_+^{q_{2k}+lq_{2k+1}}$ for $0\leq l<a_{2k+2}$, one gets$$\forall 0<m<q_{2k+1},\quad R_+^{-q_{2k}-lq_{2k+1}-m}(0)\not\in(R_+^{-q_{2k}-lq_{2k+1}}(0),R_+^{-lq_{2k+1}}(0)).$$

As $(R_+^{-q_{2k}-lq_{2k+1}}(0),0)\subset(R_+^{-q_{2k}-lq_{2k+1}}(0),R_+^{-lq_{2k+1}}(0))$, this concludes the proof.
\end{proof}

All the previous discussion can be obviously repeated for the lower map $\tilde f_-$ having rotation number $\rho_{-}$.

\subsection{Geometrical estimates}\label{S:geomest} The key estimates for our constructions have been proved by J.~Graczyk in~\cite{jacekmanuscript}, where the dynamics of upper maps is studied.

\begin{theorem}[J.~Graczyk]\label{thmgraczyk} Let $f_+$ be the upper map for some endomorphism $f$ which satisfies (A1) and (A2). 
Then there exists $C>0$ such that:
$$C^{-1}\prod_{0 { <} k\leq { n}} \left(1+\frac{a_{2k+1}}{\ell^+}\right)\leq|\log \dd (f_{+}^{q_{2n+1}}(c^+),c^+)|\leq C\prod_{0{ <} k\leq  { n}} 
\left(1+\frac{\ell^++1}{\ell^+-1}a_{2k+1}\right),$$ where $\ell^+$ is the order associated to $c^+$ that appears in (A2).
\end{theorem}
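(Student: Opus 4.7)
The plan is to combine the combinatorial control from Proposition~\ref{rotationp} with the two standard ingredients of real one-dimensional dynamics near a non-flat critical point: the Koebe distortion lemma on intervals avoiding $c^+$, and the power-law behavior $f_+(c^++x)-f_+(c^+)\asymp |x|^{\ell^+}$ coming from assumption (A1). Write $J_n=[f_+^{q_{2n+1}}(c^+),c^+]$ (treating one side; the other side is identical) and $d_n=|J_n|$.

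First I would set up the disjointness architecture. By Proposition~\ref{rotationp} applied to the rotation semi-conjugate to $f_+$, the nested sequence $J_0\supset J_1\supset\cdots$ has the property that within $J_{n-1}\setminus J_n$ the $a_{2n+1}$ intermediate closest-return intervals $f_+^{q_{2n-1}+\ell q_{2n}}(J_n)$, $0<\ell\leq a_{2n+1}$, are pairwise disjoint and accumulate on $c^+$. Coupled with the fact that the iterates $f_+^{j}(J_n)$, $0\leq j<q_{2n+1}$, are pairwise disjoint and occupy a definite total length of $\mathbb{T}^1$, this provides the Koebe ``space'' needed to bound distortion uniformly in $n$.

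Next I would derive the recursion. Writing $f_+^{q_{2n+1}}=\bigl(f_+^{q_{2n+1}-1}\bigr)\circ f_+$ and using that the first factor extends univalently to a neighborhood of $f_+(J_n)$ with a definite Koebe margin, the Koebe principle yields
$$d_{n+1}\;\asymp\;\lambda_n\,|f_+(J_n)|\;\asymp\;\lambda_n\,d_n^{\ell^+},$$
where $\lambda_n$ is the linear scaling factor of the return map. Tracking how the orbit of $c^+$ makes $a_{2n+1}$ passes near $c^+$ before the next deep return gives $\lambda_n\asymp d_{n-1}^{-(\ell^+-1)(1+\varepsilon_n)}$ with the correction $\varepsilon_n$ controlled multiplicatively by $a_{2n+1}$. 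Taking logarithms converts this into an additive recurrence
$$|\log d_{n+1}|\;\asymp\;\bigl(1+\alpha_n\, a_{2n+1}\bigr)\,|\log d_n|,$$
where $\alpha_n$ lies between $1/\ell^+$ (tight side) and $(\ell^++1)/(\ell^+-1)$ (loose side); the gap between the two constants reflects the two different ways of distributing the distortion loss between the ``way in'' to the critical scale and the ``way out''. Iterating $n$ times produces the claimed product.

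The main obstacle is keeping the Koebe constants uniform in $n$ when some $a_{2n+1}$ is very large: the $a_{2n+1}$ intermediate scales must telescope geometrically around $c^+$, and this is where the non-flat condition combined with the $C^2$ hypothesis is essential. The fact that $\ell^+>1$ is not required to be an integer only forces one to replace polynomial expansions by the asymptotic form in (A1), which is a purely technical nuisance rather than a substantive obstacle.
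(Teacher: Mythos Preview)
This theorem is not proved in the paper. It is quoted as a result of J.~Graczyk, with a citation to \cite{jacekmanuscript} (Graczyk, \emph{Dynamics of circle maps with flat spots}, Fund.\ Math.\ \textbf{209} (2010), 267--290), and is used as a black box in the proof of Proposition~\ref{primsum} and in Section~\ref{provadoB}. There is therefore no proof in the present paper against which to compare your sketch.

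As for the sketch itself: you have identified the correct raw ingredients---disjointness of the return intervals from Proposition~\ref{rotationp}, Koebe control on the diffeomorphic branch, and the power law $|x|^{\ell^+}$ at $c^+$ from (A1)---and the overall shape (a multiplicative recursion for $|\log d_n|$ picking up a factor governed by $a_{2k+1}$ at each level) is indeed how Graczyk's argument runs. But the sketch is not yet a proof. The decomposition $f_+^{q_{2n+1}}=(f_+^{q_{2n+1}-1})\circ f_+$ relates $d_n$ to itself, not to $d_{n+1}$; the relevant transition is from level $n$ to level $n+1$, which involves $a_{2n+3}$ passes through the critical scale, and your indexing does not track this correctly. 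More importantly, the sentence ``$\lambda_n\asymp d_{n-1}^{-(\ell^+-1)(1+\varepsilon_n)}$ with $\varepsilon_n$ controlled multiplicatively by $a_{2n+1}$'' and the assertion that ``$\alpha_n$ lies between $1/\ell^+$ and $(\ell^++1)/(\ell^+-1)$'' are exactly the content of the theorem, not an argument for it. Pinning down those two constants requires a careful cross-ratio analysis of how the $a_{2k+1}$ intermediate returns nest inside the critical neighborhood and how the Koebe loss interacts with the exponent $\ell^+$; this is the substance of \cite{jacekmanuscript}, and you should consult that paper directly if you want a complete proof.
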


The estimates above, given for $f_{+}$, also hold for $f$ (since $f$ and $f_{+}$ coincide along the orbit of $c^{+}$).

\begin{remark}\label{rkgraczyk} In the same paper, J.~Graczyk gives the following estimate:
$$\log\left(\frac{|{ f_+^{-q_{2n}}(\II^+)}|}{\dd({ f_+^{-q_{2n}}(\II^+)},c^+)}\right)>C^{-1}\prod_{0{ <} k\leq n} \left(1+\frac{a_{2k+1}}{\ell^+}\right){ -C}.$$
\end{remark}

Of course similar statements deal with lower maps.

\subsection{Extended upper map} 
%

In addition to the upper map $f_+$, we will also use another upper map which is no more continuous, the extended upper map $g_+$ (or simply $g$, see Figure \ref{figextuppermap}).

\subsubsection{}\label{defm0} Let $M_0\geq 0$.
The following intervals have to be considered as a basis for the left neighborhoods of the point $c^+$:$$I_{k,l}^+=f_+^{-q_{2k}-lq_{2k+1}}(\II^+),\text{ for } k\geq M_0,\quad 0\leq l<a_{2k+2}.$$

\subsubsection{}\label{defpsi} Recall that the map $\tilde f$ is strictly increasing on $[\tilde f_+^{-q_{2M_0-2}}(\tilde c^+),\tilde c^+]$ and on $[\tilde c^-,\tilde d^+]$. As $\tilde f(\tilde c^+)=\tilde f(\tilde d^+)$, one defines an increasing homeomorphism $\tilde \Psi$ from $[\tilde f_+^{-q_{2M_0-2}}(\tilde c^+),\tilde c^+]$ onto some subinterval of $[\tilde c^-, \tilde d^+]$ by the condition:$$\forall x\in [\tilde f_+^{-q_{2M_0-2}}(\tilde c^+),\tilde c^+],\quad\tilde f(\tilde \Psi(x))=\tilde f(x).$$

By $\pi$, we get on $\mathbb{T}^1$ an homeomorphism $\Psi$ between some left neighborhoods of $c^+$ and $d^+$ respectively.

\subsubsection{}\label{defar} Let us now approximate the orbit of $c^+$ by a periodic orbit:

\begin{lemma}\label{def-a}
There exists a $q_{2M_0}$-periodic point $a^+\in f_+^{-q_{2M_0}+q_{2M_0-1}}(\II^+)$, having a unique iterate $b^+$ in $\II^+$.
\end{lemma}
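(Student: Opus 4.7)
The plan is to locate $a^+$ within a genuine periodic orbit of $f$ of period $q_{2M_0}$ and rotation number $p_{2M_0}/q_{2M_0}$, combining Misiurewicz's rotation interval theorem for existence with the closest-return combinatorics of Proposition~\ref{bbo} for the localization. First I would exploit the fact that $\II^+$ is an $f_+$-wandering interval: since $\rho^+$ is irrational and $f_+$ is semi-conjugate to $R_+$, the forward iterates $f_+^j(\II^+) = \{f_+^j(c^+)\}$ for $j \geq 1$ are pairwise distinct points lying outside $\II^+$. Consequently $J := f_+^{-(q_{2M_0}-q_{2M_0-1})}(\II^+)$ is itself a wandering interval, the restriction $f_+^{q_{2M_0}-q_{2M_0-1}}|_J : J \to \II^+$ is a strictly monotone homeomorphism, and for every $x \in J$ the iterates $f^j(x) = f_+^j(x)$ agree for $0 \leq j \leq q_{2M_0}-q_{2M_0-1}$ and avoid $\interior(\II^+)$ for $j < q_{2M_0}-q_{2M_0-1}$.

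Next, for $M_0$ sufficiently large the rational $p_{2M_0}/q_{2M_0}$ lies strictly inside $(\rho^-,\rho^+)$, so the rotation interval theorem applied to $f$ yields a periodic orbit $O$ of minimal period $q_{2M_0}$ and rotation number $p_{2M_0}/q_{2M_0}$. Using Proposition~\ref{bbo} together with the fact that the first $q_{2M_0}$ iterates of the rotations $R_+$ and $R_{p_{2M_0}/q_{2M_0}}$ produce the same cyclic ordering on $\mathbb{T}^1$, I would argue that $O$ inherits the cyclic ordering of the first $q_{2M_0}$ iterates of $c^+$ under $f_+$. Since the $f_+$-orbit of $c^+$ visits $\II^+$ only at time $0$ (because $\II^+$ is $f_+$-wandering), combinatorial matching provides an orbit $O$ with exactly one iterate $b^+$ in $\II^+$, namely the one combinatorially corresponding to $c^+$.

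Finally, I would set $a^+ := f^{q_{2M_0-1}}(b^+) \in O$. Periodicity then gives
$$f^{q_{2M_0}-q_{2M_0-1}}(a^+) = f^{q_{2M_0}}(b^+) = b^+ \in \II^+,$$
and because $b^+$ is the only iterate of $O$ in $\II^+$, each intermediate iterate $f^j(a^+)$ for $0 \leq j < q_{2M_0}-q_{2M_0-1}$ lies outside $\II^+$ and therefore coincides with $f_+^j(a^+)$. In particular $f_+^{q_{2M_0}-q_{2M_0-1}}(a^+) = b^+ \in \II^+$, so $a^+ \in J = f_+^{-(q_{2M_0}-q_{2M_0-1})}(\II^+)$, which is the asserted localization.

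The hard step is the combinatorial matching: showing that among the periodic orbits of $f$ with rotation number $p_{2M_0}/q_{2M_0}$ there is one whose visits to $\II^+$ mirror those of the $f_+$-orbit of $c^+$ (one visit only) rather than those of the actual rotation orbit (which has roughly $q_{2M_0}\,|\II^+|$ visits). I expect this to follow from a fixed-point argument applied to $\tilde f^{q_{2M_0}}-p_{2M_0}$ on a suitable extension of $\tilde J$, using the boundary identity $\tilde f^{q_{2M_0}}(\tilde x_L)=\tilde f^{q_{2M_0}}(\tilde x_R)=\tilde f_+^{q_{2M_0-1}}(\tilde c^+)$ (which holds because $f(c^+)=f(d^+)$) together with the bimodal dip of $\tilde f^{q_{2M_0-1}}$ over $\tilde\II^+$, ensuring that precisely one fixed point lies in $\tilde J$ and that its forward orbit shadows the $f_+$-orbit of $c^+$ closely enough to enter $\II^+$ only once.
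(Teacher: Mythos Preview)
Your approach via Misiurewicz's rotation interval theorem is an unnecessary detour that creates exactly the difficulty you flag as ``the hard step'': Misiurewicz gives you \emph{some} periodic orbit of rotation number $p_{2M_0}/q_{2M_0}$, but no control over how many times it visits $\II^+$, and your combinatorial matching argument remains a sketch. The cyclic-order comparison between $R_+$ and $R_{p_{2M_0}/q_{2M_0}}$ does not by itself force a generic Misiurewicz orbit to hit $\II^+$ only once --- you would need to select the orbit carefully, which is precisely what you defer to the final paragraph.

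The paper bypasses all of this with a two-line argument using the folding homeomorphism $\Psi$ from Section~\ref{defpsi}. Since $f_+^{-q_{2M_0}}(\II^+)$ is a small left neighbourhood of $c^+$, its image $\Psi\big(f_+^{-q_{2M_0}}(\II^+)\big)$ is a subinterval of $\II^+$ near $d^+$. Because $f\circ\Psi=f$ there, one has $f^{q_{2M_0}}\big(\Psi\circ f_+^{-q_{2M_0}}(\II^+)\big)=f_+^{q_{2M_0}}\big(f_+^{-q_{2M_0}}(\II^+)\big)=\II^+$. So $f^{q_{2M_0}}$ maps a subinterval of $\II^+$ onto $\II^+$, and the intermediate value theorem yields a fixed point $b^+$ of $f^{q_{2M_0}}$ inside $\Psi\circ f_+^{-q_{2M_0}}(\II^+)$. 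Setting $a^+=f^{q_{2M_0-1}}(b^+)$ then gives the localisation directly, with uniqueness of the visit to $\II^+$ following because the orbit of $b^+$ shadows the $f_+$-preimages of $\II^+$ (this is the content of the Remark following the lemma). Your final paragraph gropes toward a fixed-point argument of this kind, but the key object you are missing is $\Psi$: it converts the problem from ``find a periodic orbit with prescribed combinatorics'' into ``find a fixed point of a map of an interval onto a larger interval containing it''.
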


\begin{remark} This gives $a^+=f_+^{-q_{2M_0}+q_{2M_0-1}}(b^+)=f^{q_{2M_0-1}}(b^+)$ and
$$\forall n\geq 1,\; f^n(a^+)\not \in (a^+,b^+).$$
For any $1\leq n\leq q_{2M_0}$, $f^n(b^+)\in f_+^{n-q_{2M_0}}(\II^+)$ so that the orbit of $b^+$ by $f$ in $\mathbb{T}^1$ is ordered as the orbits of the rotation with angle $\frac{p_{2M_0}}{q_{2M_0}}$.
\end{remark}

\begin{proof}[Proof of Lemma~\ref{def-a}] Note that $\Psi\circ f_+^{-q_{2M_0}}(\II^+)\subset \II^+$ and $f^{q_{2M_0}}(\Psi\circ f_+^{-q_{2M_0}}(\II^+))=\II^+$. Hence, $\Psi\circ f_+^{-q_{2M_0}}(\II^+)$ contains a fixed point $b^+$ for $f^{q_{2M_0}}$.
\end{proof}

\subsubsection{}\label{defarp} We will denote $$A_L^+=[a^+,c^+),\quad A^+_R=(b^+,d^+].$$
For $M_0$ large enough, one can assume$$A_R^+\subset \II^+, \quad A_L^+\cap \II^+=\emptyset\,\text{ and }\, A_L^+\subset \II^-.$$
One also sets$$A^+=\II^+\setminus A^+_R=[c^+,b^+],\quad \hat A^+=A^+_L\cup \II^+.$$
More generally we will consider the intervals$$A^+_R(k)=\Psi((f_+^{-q_{2k}}(d^+),c^+]),\quad \text{for $k\geq M_0$.}$$
The \emph{extended} upper map $g_+$ is defined by$$g_+(x)=\begin{cases}
f(x), \quad \text{ if }x\not \in A^+,\\
f(c^+),\quad \text{ if } x\in A^+.
\end{cases}$$

\begin{figure}[h!]
\begin{center}
\begin{overpic}[scale=0.4]{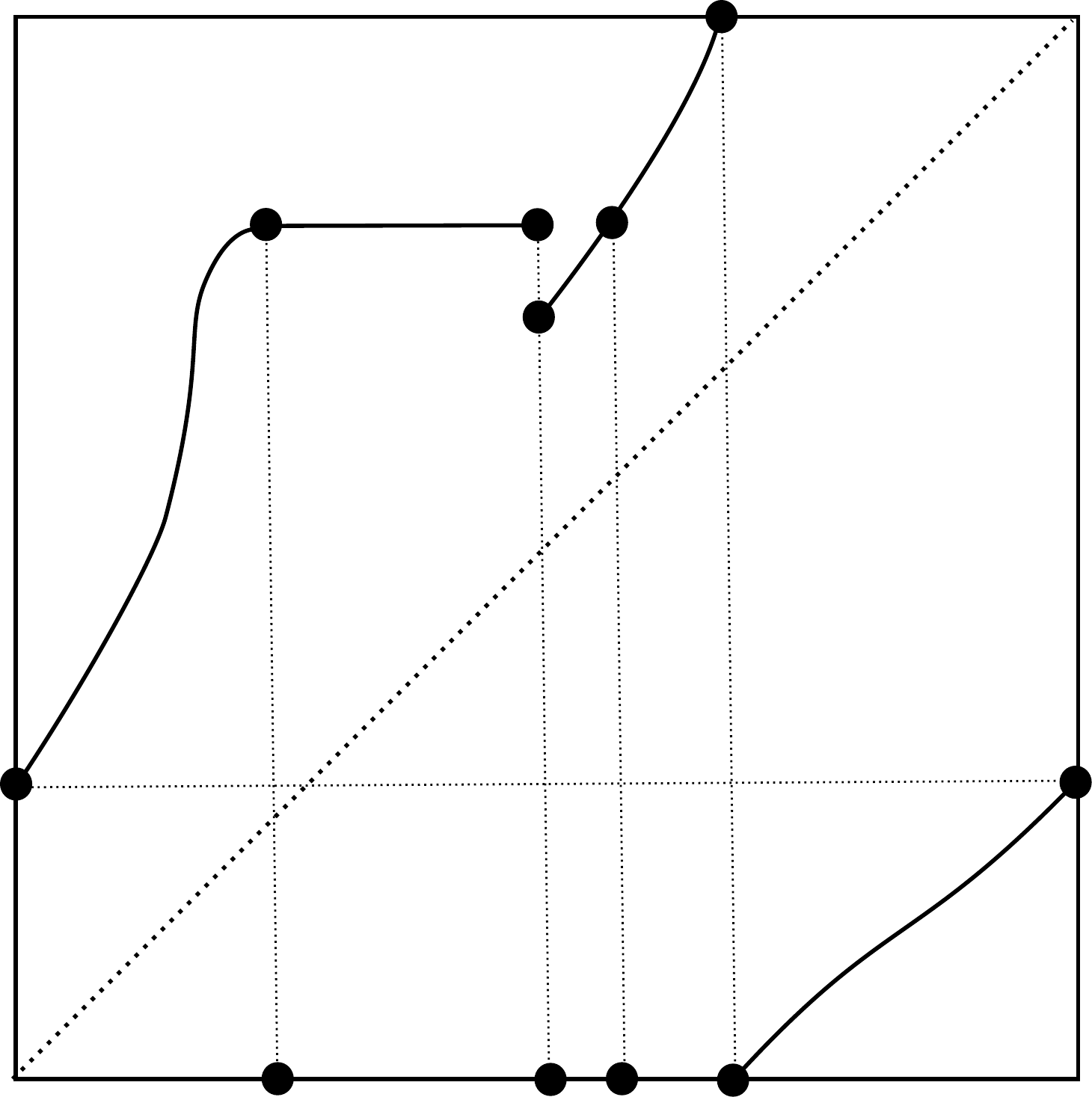}
\put(8, 60){$g_+$}
\put(23, -5,5){$c^+$}
\put(47, -5,5){$b^+$}
\put(55, -5,5){$d^+$}
\end{overpic}
\medskip
\caption{The extended upper map $g_+$ is discontinuous at the point $b^+$, which is a periodic point for $f$ with period $q_{2M_0}^+$ (see Lemma \ref{def-a}).}
\label{figextuppermap}
\end{center}
\end{figure}

\subsubsection{}\label{def-hat-II} In the same way, one defines intervals $A^-=[b^-,c^-]$, $\hat A^-=[d^-,a^-]$, $A^-_R=(c^-,a^-]$, $A^-_L=[d^-,b^-)$ and an extended lower map $g_-$. One also define $$A=A^+_L\cup \II\cup A^-_R.$$

If $M_0$ is large enough, this is a proper interval which contains $\II$.

Finally, one chooses a closed interval $\hat A=\hat \II$ whose interior contains $A$, and such that the connected components of $\hat A\setminus \II$ are contained in $\II^+$ and $\II^-$ respectively.

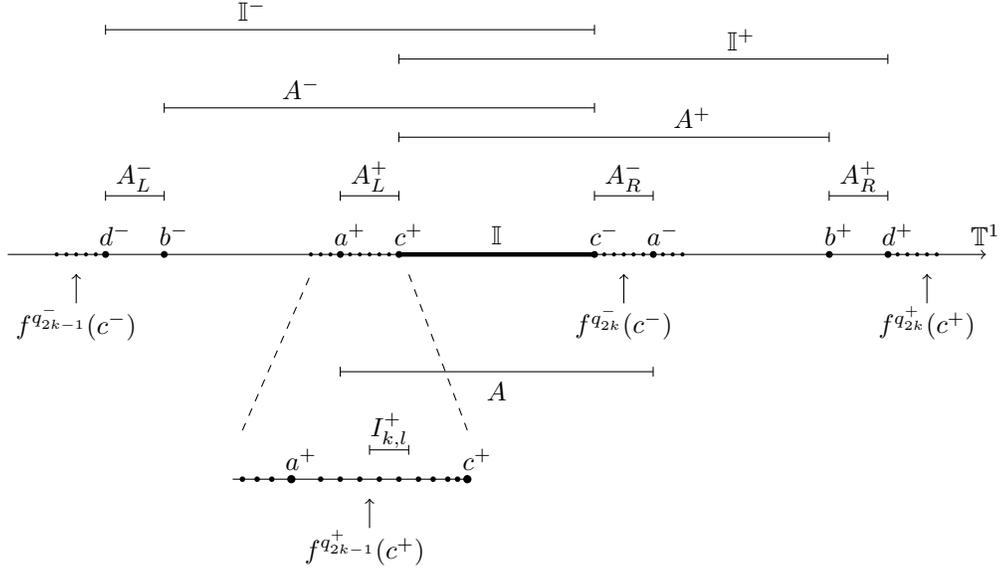
\begin{figure}[h!]
\centering
\begin{tikzpicture}[scale=1.3]
\coordinate (A) at (0,0);
\coordinate (B) at (10, 0);
\draw [->] (A) -- (B);
\node at (10, 0.2) {$\mathbb{T}^1$};

\coordinate (d-) at (1,0);
\coordinate (b-) at (1.6,0);
\coordinate (a+) at (3.4,0);
\coordinate (c+) at (4,0);
\coordinate (c-) at (6,0);
\coordinate (a-) at (6.6,0);
\coordinate (b+) at (8.4,0);
\coordinate (d+) at (9,0);

\draw [-, color=black, line width=0.6mm] (c+) -- (c-);
\node at (5, 0.2) {\color{black}$\mathbb{I}$};

\draw [-] (1, 0.6) -- (1.6,0.6);
\draw [-] (3.4, 0.6) -- (4,0.6);
\node at (1.3, 0.8) {$A_L^-$};
\node at (3.7, 0.8) {$A_L^+$};
\draw [-] (1,0.55 ) -- (1,0.65);
\draw [-] (1.6,0.55 ) -- (1.6,0.65);
\draw [-] (3.4,0.55 ) -- (3.4,0.65);
\draw [-] (4,0.55 ) -- (4,0.65);

\draw [-] (6, 0.6) -- (6.6,0.6);
\draw [-] (8.4, 0.6) -- (9,0.6);
\node at (6.3, 0.8) {$A_R^-$};
\node at (8.7, 0.8) {$A_R^+$};
\draw [-] (6,0.55 ) -- (6,0.65);
\draw [-] (6.6,0.55 ) -- (6.6,0.65);
\draw [-] (8.4,0.55 ) -- (8.4,0.65);
\draw [-] (9,0.55 ) -- (9,0.65);

\draw [-] (3.4, -1.2) -- (6.6,-1.2);
\node at (5, -1.4) {$A$};
\draw [-] (3.4,-1.15) -- (3.4,-1.25);
\draw [-] (6.6,-1.15 ) -- (6.6,-1.25);

\draw [-] (1.6, 1.5) -- (6,1.5);
\draw [-] (4, 1.2) -- (8.4,1.2);
\node at (3, 1.7) {$A^-$};
\node at (7, 1.4) {$A^+$};
\draw [-] (1.6,1.45 ) -- (1.6,1.55);
\draw [-] (6,1.45 ) -- (6, 1.55);
\draw [-] (4,1.15 ) -- (4,1.25);
\draw [-] (8.4,1.15) -- (8.4,1.25);

\draw [-] (1, 2.3) -- (6,2.3);
\draw [-] (4, 2) -- (9,2);
\node at (2.5, 2.5) {$\mathbb{I}^-$};
\node at (7.5, 2.2) {$\mathbb{I}^+$};
\draw [-] (1,2.25 ) -- (1,2.35);
\draw [-] (6,2.25 ) -- (6, 2.35);
\draw [-] (4,1.95 ) -- (4,2.05);
\draw [-] (9,1.95) -- (9,2.05);

\fill[black] (d-) circle (1pt) ;
\fill[black] (b-) circle (1pt) ;
\fill[black] (a+) circle (1pt) ;
\fill[black] (c+) circle (1pt) ;
\fill[black] (c-) circle (1pt) ;
\fill[black] (a-) circle (1pt) ;
\fill[black] (b+) circle (1pt) ;
\fill[black] (d+) circle (1pt) ;

\node at (1.1, 0.2) {$d^-$};
\node at (1.7, 0.2) {$b^-$};
\node at (3.5, 0.2) {$a^+$};
\node at (4.1, 0.2) {\color{black}$c^+$};
\node at (6.1, 0.2) {\color{black}$c^-$};
\node at (6.7, 0.2) {$a^-$};
\node at (8.5, 0.2) {$b^+$};
\node at (9.1, 0.2) {$d^+$};

\fill[black] (0.5, 0) circle (.6pt) ;
\fill[black] (0.6, 0) circle (.6pt) ;
\fill[black] (0.7, 0) circle (.6pt) ;
\fill[black] (0.8, 0) circle (.6pt) ;
\fill[black] (0.9, 0) circle (.6pt) ;

\fill[black] (3.1, 0) circle (.6pt) ;
\fill[black] (3.2, 0) circle (.6pt) ;
\fill[black] (3.3, 0) circle (.6pt) ;
\fill[black] (3.5, 0) circle (.6pt) ;
\fill[black] (3.6, 0) circle (.6pt) ;
\fill[black] (3.7, 0) circle (.6pt) ;
\fill[black] (3.8, 0) circle (.6pt) ;
\fill[black] (3.9, 0) circle (.6pt) ;

\fill[black] (6.1, 0) circle (.6pt) ;
\fill[black] (6.2, 0) circle (.6pt) ;
\fill[black] (6.3, 0) circle (.6pt) ;
\fill[black] (6.4, 0) circle (.6pt) ;
\fill[black] (6.5, 0) circle (.6pt) ;
\fill[black] (6.7, 0) circle (.6pt) ;
\fill[black] (6.8, 0) circle (.6pt) ;
\fill[black] (6.9, 0) circle (.6pt) ;

\fill[black] (9.1, 0) circle (.6pt) ;
\fill[black] (9.2, 0) circle (.6pt) ;
\fill[black] (9.3, 0) circle (.6pt) ;
\fill[black] (9.4, 0) circle (.6pt) ;
\fill[black] (9.5, 0) circle (.6pt) ;

\node at (0.7, -0.7) {$f^{q^-_{2k-1}}(c^-)$};
\draw [->] (0.7,-0.5 ) -- (0.7,-0.2);


\node at (6.3, -0.7) {$f^{q^-_{2k}}(c^-)$};
\draw [->] (6.3,-0.5 ) -- (6.3,-0.2);

\node at (9.4, -0.7) {$f^{q^+_{2k}}(c^+)$};
\draw [->] (9.4,-0.5 ) -- (9.4,-0.2);

\draw[-](2.3, -2.3)--(4.7, -2.3);
\fill[black] (2.9, -2.3) circle (1.2pt) ;
\fill[black] (4.7, -2.3) circle (1.2pt) ;
\node at (3, -2.1) {$a^+$};
\node at (4.8, -2.1) {$c^+$};
\draw [-, dashed] (2.4,-1.8 ) -- (3.1,-0.2);
\draw [-, dashed] (4.7,-1.8 ) -- (4.1, -0.2);

\fill[black] (2.4, -2.3) circle (.8pt) ;
\fill[black] (2.55, -2.3) circle (.8pt) ;
\fill[black] (2.7, -2.3) circle (.8pt) ;

\fill[black] (3.2, -2.3) circle (.8pt) ;
\fill[black] (3.4, -2.3) circle (.8pt) ;
\fill[black] (3.6, -2.3) circle (.8pt) ;
\fill[black] (3.8, -2.3) circle (.8pt) ;
\fill[black] (4, -2.3) circle (.8pt) ;
\fill[black] (4.2, -2.3) circle (.8pt) ;
\fill[black] (4.35, -2.3) circle (.8pt) ;
\fill[black] (4.5, -2.3) circle (.8pt) ;
\fill[black] (4.6, -2.3) circle (.8pt) ;

\draw[-](3.7, -2)--(4.1, -2);
\node at (3.9, -1.8) {$I_{k,l}^+$};
\draw [-] (3.7,-1.95 ) -- (3.7,-2.05);
\draw [-] (4.1,-1.95) -- (4.1, -2.05);

\node at (3.65, -3) {$f^{q^+_{2k-1}}(c^+)$};
\draw [->] (3.7,-2.8 ) -- (3.7,-2.5);

\end{tikzpicture}
\caption{Notation for Sections \ref{induces} to \ref{markovprop}.} 
\end{figure}

\section{Induced maps}\label{induces}

\subsection{ }\label{geneinduc} We begin with a general situation: let $I=[z_L,z_R]$ and $\hat I=[\hat z_L,\hat z_R]$ be proper intervals of $\mathbb{T}^1$ that contain $\II=\pi([\tilde c^+,\tilde c^-])$ such that $I\subset \interior(\hat I)$ and which satisfy for every integer $n\geq 1$:
\begin{align*}
&(i)f^n(c^+)\not\in [c^+,\hat z_R],\quad &(iii)f^n(z_L)\not\in (z_L,z_R],\\
&(ii)f^n(c^-)\not\in [\hat z_L,c^-],\quad &(iv)f^n(z_R)\not\in [z_L,z_R).
\end{align*}

For any point $x\in\mathbb{T}^1\setminus I$ one defines (when it exists) the smallest integer $N(x)\geq 1$ such that $f^{N(x)}(x)\in I$. In the other case, one sets $N(x)=\infty$.

\begin{proposition}\label{inducp} Let $I$ and $\hat I$ be as above. For any $x\in\mathbb{T}^1\setminus I$ such that $N=N(x)<\infty$, there exist some compact intervals $J\subset \hat J$ containing $x$ such that:
\begin{enumerate}
\item $\forall\,\,0\leq n<N, \; f^n(J)\cap I=\emptyset$;
\item the map $f^N$ is a homeomorphism from $J$ (resp $\hat J$) onto $I$ (resp. $\hat I$). Moreover, for any $y\in J$, $N(y)=N(x)$.
\end{enumerate}
\end{proposition}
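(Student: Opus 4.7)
The plan is to build $J \subset \hat J$ by pulling back the target intervals $I$ and $\hat I$ along the orbit of $x$. Set $x_n = f^n(x)$; by hypothesis $x_N \in I$ and $x_n \notin I \supset \II$ for $0 \leq n < N$, so each such $x_n$ lies in the open monotonic increasing branch $(c^-, c^+)$ complementary to $\II$. Put $\hat K_N := \hat I$ and $K_N := I$, and for $n = N, N-1, \ldots, 1$ define $\hat K_{n-1}$ (resp.\ $K_{n-1}$) to be the maximal closed interval containing $x_{n-1}$ on which $f$ restricts to a homeomorphism onto $\hat K_n$ (resp.\ $K_n$). Since $f$ is monotone near $x_{n-1}$, the extension proceeds inside the branch in each direction until it meets either a preimage of an endpoint of the target or one of the critical points $c^+, c^-$.

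The heart of the proof is to show that the extension never reaches a critical point. Because every intermediate $x_k$ lies in the increasing branch, each step $f \colon \hat K_k \to \hat K_{k+1}$ preserves the natural (branch) order, and hence so does the composition $f^{N-n} \colon \hat K_n \to \hat I$. Were the right endpoint of $\hat K_{n-1}$ to equal $c^+$, then $f(c^+)$ would be the right endpoint of $\hat K_n$; iterating this relation along the chain of homeomorphisms gives $f^{N-n+1}(c^+) = \hat z_R$, which violates condition (i) since $\hat z_R \in [c^+, \hat z_R]$. The symmetric case with $c^-$ as left endpoint is excluded by condition (ii). The same argument applies to the pullback $K_n$ of $I$: there the forbidden values at the critical endpoint would be $z_R$ and $z_L$, and since $z_R \in [c^+, \hat z_R]$ and $z_L \in [\hat z_L, c^-]$, conditions (i) and (ii) again rule them out. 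Therefore $\hat K_n, K_n \subset (c^-, c^+)$ for every $0 \leq n < N$, and $f^{N-n}$ is a homeomorphism onto $\hat I$ (resp.\ $I$). Set $\hat J := \hat K_0$ and $J := K_0$; the nested pullbacks give $J \subset \hat J$ and establish item (2) of the proposition.

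For item (1) we verify that $K_n = f^n(J)$ is disjoint from $I$ for $0 \leq n < N$. Inside the branch the natural order of the relevant points reads $c^- < z_R < \hat z_R < \hat z_L < z_L < c^+$, so that $x_n \notin I$ places $x_n$ strictly between $z_R$ and $z_L$ in this order. Suppose $z_L \in K_n$. If $z_L$ lies in $\interior(K_n)$ then $f^{N-n}(z_L)$ falls into $\interior(I) \subset (z_L, z_R]$, contradicting (iii); if it is the right endpoint of $K_n$, orientation sends it to $z_R \in (z_L, z_R]$, again contradicting (iii); if it is the left endpoint, the branch ordering forces $x_n \geq z_L$, placing $x_n$ in $[z_L, c^+) \subset I$, contrary to $x_n \notin I$. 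Hence $z_L \notin K_n$, and by the symmetric argument with (iv), $z_R \notin K_n$. Since $K_n$ is connected, contains $x_n \notin I$, and meets neither endpoint of $I$, it must be disjoint from $I$ altogether. Combined with $f^N(J) = I$, this yields $N(y) = N$ for every $y \in J$.

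The main obstacle is the critical-point argument in the second paragraph: one must keep track of orientation through the entire pullback in order to identify precisely which endpoint of $\hat I$ (or $I$) a forward iterate of a critical point would have to coincide with, and then match it with the specific arcs $[c^+, \hat z_R]$ and $[\hat z_L, c^-]$ appearing in conditions (i) and (ii). Once that matching is set up correctly, the rest is a direct induction.
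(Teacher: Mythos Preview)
Your step-by-step pullback strategy is natural, but the critical-point argument in the second paragraph contains a genuine gap. You assert that if the right extension of $\hat K_{n-1}$ terminates at $c^+$, then $f(c^+)$ is the right endpoint of $\hat K_n$. This is precisely what fails in the case you need to exclude: when the extension stops at $c^+$ because the monotone branch has been exhausted before reaching a preimage of the right endpoint of $\hat K_n$, the point $f(c^+)$ lands in the \emph{interior} of $\hat K_n$ (strictly between $x_n$ and its right endpoint), not at the boundary. Pushing forward then yields only $f^{N-n+1}(c^+)\in (x_N,\hat z_R)$ inside $\hat I$, and condition~(i) cuts this down to $f^{N-n+1}(c^+)\in (x_N,c^+)$. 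That is no contradiction: nothing prevents $x_N$ from lying in $[z_L,c^+)$, and the interval $(x_N,c^+)$ is not covered by the forbidden set $[c^+,\hat z_R]$. In short, conditions~(i) and~(ii) alone are not strong enough to keep the pullbacks away from the critical points.

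The paper closes this gap by working globally with the maximal interval $[x_1,x_2]$ on which $f^N$ itself is monotone, identifying the intermediate times $N-n_1$, $N-n_2$ at which the endpoints hit $c^-$, $c^+$, and then combining condition~(i) with condition~(iii): from $f^{N-n_2}(x_2)=c^+$ and $f^{N-n_2}(x)\notin[z_L,c^+]$ one obtains $z_L\in f^{N-n_2}((x,x_2])$, whence $f^{n_2}(z_L)\in f^N((x,x_2])\subset(z_L,c^+)\subset(z_L,z_R]$, contradicting~(iii). The point is that~(iii) and~(iv) are already needed to show $f^N([x_1,x_2])\supset\hat I$; you cannot reserve them solely for the disjointness step. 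Your argument for item~(1), by contrast, is correct once $J$ has been constructed.
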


Such an interval $J$ will be called a \textbf{return interval} with \textbf{extension} $\hat J$ and \textbf{order} $N$.

\begin{proof}[Proof of Proposition \ref{inducp}] Let $x\in\mathbb{T}^1\setminus I$ and let $[x_1,x_2]$ be the maximal compact interval containing $x$ where $f^N$ is monotone. As $f^m(x)\not\in \II$ for any $0\leq m< N$, the maps $f^n$ for $0\leq n\leq N$ are strictly increasing on $[x_1,x_2]$. Moreover, there exist some integers $1\leq n_1,n_2\leq N$ with $f^{N-n_i}(x_i)\in (\{c^-,c^+\})$. Thus, by assumption,$$f^{N-n_1}([x_1,x_2])\subset \mathbb{T}^1\setminus\interior(\II).$$

One deduces $f^{N-n_1}(x_1)=c^-$ and in a same way $f^{N-n_2}(x_2)=c^+$.

\begin{claim}
$f^N([x_1,x_2])\supset \hat I$.
\end{claim}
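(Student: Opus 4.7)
My approach is to work in the universal cover and reduce the desired inclusion to two one-sided endpoint estimates. Fix lifts $\tilde{x}_1<\tilde{x}<\tilde{x}_2$ of $x_1,x,x_2$ and a lift $\tilde{f}$ so that $\tilde{F}:=\tilde{f}^N$ is strictly increasing on $[\tilde{x}_1,\tilde{x}_2]$ (which is the content of the paragraph just above the Claim). Pick the lifts $[\tilde{z}_L,\tilde{z}_R]\subset[\tilde{\hat z}_L,\tilde{\hat z}_R]$ of $I$ and $\hat{I}$ with $\tilde{F}(\tilde{x})\in[\tilde{z}_L,\tilde{z}_R]$. Since $\tilde{F}$ is monotone, the image $\tilde{F}([\tilde{x}_1,\tilde{x}_2])$ is a closed interval of $\mathbb{R}$; if its length is at least $1$ it already projects onto all of $\mathbb{T}^1$ and the claim is immediate. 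So I may assume its length is strictly less than one, in which case the claim becomes
$$\tilde{F}(\tilde{x}_1)\;\leq\;\tilde{\hat z}_L\qquad\text{and}\qquad\tilde{F}(\tilde{x}_2)\;\geq\;\tilde{\hat z}_R.$$

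I will argue the first inequality; the second is symmetric upon swapping $c^-\leftrightarrow c^+$, $n_1\leftrightarrow n_2$ and condition (ii)$\leftrightarrow$(i). Set $\tilde{c}^-_\star:=\tilde{f}^{N-n_1}(\tilde{x}_1)$, a specific lift of $c^-$, so that $\tilde{F}(\tilde{x}_1)=\tilde{f}^{n_1}(\tilde{c}^-_\star)$ is a lift of $f^{n_1}(c^-)$. Condition (ii), which reads $f^{n_1}(c^-)\notin[\hat{z}_L,c^-]$, translates in the chosen fundamental domain to $\tilde{F}(\tilde{x}_1)\notin[\tilde{\hat z}_L,\tilde{c}^-]$. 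Combined with the monotonicity bound $\tilde{F}(\tilde{x}_1)<\tilde{F}(\tilde{x})\leq\tilde{z}_R$, this already excludes every possibility except $\tilde{F}(\tilde{x}_1)\in(\tilde{c}^-,\tilde{z}_R)$, and it is this remaining configuration that must be ruled out.

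To exclude it, I plan to analyse the sequence of arcs $T_j:=\tilde{f}^j([\tilde{x}_1,\tilde{x}_2])$ for $j=0,\ldots,N$ iterate by iterate. Each $T_j$ lies in a single monotone branch of $\tilde{f}$, and its endpoints $\tilde{f}^j(\tilde{x}_1)$ and $\tilde{f}^j(\tilde{x}_2)$ agree, respectively, with specific iterates of the lifts $\tilde{c}^-_\star$ and $\tilde{c}^+_\star:=\tilde{f}^{N-n_2}(\tilde{x}_2)$ once $j\geq N-n_1$ (resp.\ $j\geq N-n_2$). If $\tilde{F}(\tilde{x}_1)\in(\tilde{c}^-,\tilde{z}_R)$, then pulling this placement back through the $n_1$ monotone iterates of $\tilde{f}$ forces, at some intermediate step $N-n_1<j<N$, the iterate $f^{j-(N-n_1)}(c^-)$ to lie in the forbidden sub-arc $[\hat{z}_L,c^-]$, which directly contradicts condition (ii); at the same time, conditions (iii) and (iv) rule out the alternative scenario in which the left endpoint of some intermediate $T_j$ hits $z_L$ or $z_R$, which would otherwise produce a premature return of the orbit of $x_1$ to $I$.

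The main obstacle is the third paragraph: keeping track of which lift and which monotone branch each arc $T_j$ lies in, and identifying for each of the $n_1$ iterates which of the conditions (i)--(iv) is the one ruling out the corresponding bad placement. No new dynamical input beyond these hypotheses and the monotonicity structure established in the text is needed, but the combinatorial bookkeeping is the technical heart of the argument.
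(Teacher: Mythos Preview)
Your setup is correct: reducing to the endpoint inequality $\tilde F(\tilde x_1)\leq \tilde{\hat z}_L$, invoking (ii) to get $f^N(x_1)=f^{n_1}(c^-)\notin[\hat z_L,c^-]$, and isolating the residual bad case $f^N(x_1)\in(c^-,z_R)$ is exactly right. The gap is in how you propose to eliminate this bad case. Condition (ii) has already been used in full to reach the dichotomy; there is no reason why ``pulling back through the $n_1$ iterates'' should force some intermediate $f^{k}(c^-)$ into $[\hat z_L,c^-]$ --- indeed for $n_1=1$ there are no intermediate iterates at all, and nothing in (ii) forbids $f(c^-)\in(c^-,z_R)$. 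Your invocation of (iii)--(iv) is also misdirected: those conditions constrain the forward orbits of $z_L,z_R$, not where the orbit of $c^-$ lands, and the phrase ``premature return of the orbit of $x_1$ to $I$'' is confused since $f^{N-n_1}(x_1)=c^-$ already lies in $I$.

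The missing idea is to bring the orbit of $x$ (not $x_1$) back into the argument at the single time $N-n_1$. Since $N-n_1<N$ one has $f^{N-n_1}(x)\notin I$, while $f^{N-n_1}(x_1)=c^-\in I$ and $f^{N-n_1}$ is increasing on $[x_1,x]$; hence $f^{N-n_1}([x_1,x))\supset[c^-,z_R]$. In particular $z_R=f^{N-n_1}(y)$ for some $y\in[x_1,x)$, and then
\[
f^{n_1}(z_R)=f^N(y)\in f^N([x_1,x))\subset(c^-,z_R)\subset[z_L,z_R),
\]
contradicting (iv). This is precisely the paper's argument (stated there on the symmetric side with $x_2$, $c^+$, $z_L$ and condition (iii)). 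No iterate-by-iterate bookkeeping is needed: one looks at a single well-chosen intermediate time and compares the positions of $x$ and $x_1$.
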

\begin{proof} When $\hat z_R\in [f^N(x),f^N(x_2)]$ and $\hat z_L\in [f^N(x_1),f^N(x)]$, the claim follows immediately.
Let us suppose by contradiction the first inclusion does not hold (the other case is similar).
By assumption, $f^N(x_2)\not\in[c^+,\hat z_R]$ so that$$f^N((x,x_2])\subset (z_L,c^+).$$

On the other hand $f^{N-n_2}(x)\not\in [z_L,c^+]$ and $f^{N-n_2}(x_2)=c^+$ so that$$f^{N-n_2}((x,x_2])\supset [z_L,c^+].$$

Hence what we get contradicts the assumptions:$$f^{n_2}(z_L)\in f^N((x,x_2])\subset (z_L,c^+).$$
\end{proof}

From the claim and $f^N(x)\in I$, one deduces that there are some compact intervals $J\subset \hat J$ that contain $x$ and are mapped onto $I$ and $\hat I$ respectively. If one assumes that $f^n(J)$ intersects $I$ for some $0\leq n <N$, since $f^n(x)\not\in I$ one would deduce either that $z_L\in (f^n(x),f^n(x_2)]$ or $z_R\in[f^n(x_1),f^n(x))$. Hence, $f^{N-n}(z_L)\in(z_L,z_R]$ or $f^{N-n}(z_R)\in [z_L,z_R)$. This is impossible so that$$\forall 0\leq n<N, \quad f^n(J)\cap I=\emptyset.$$
\end{proof}

Using the previous general setting and Proposition \ref{inducp} we give now the definition of first entry map to different intervals which will play a main role in the proof of our results.

\subsection{First return map to $\II$}\label{frmi} Section~\ref{geneinduc} applies with $I=\II$ and $\hat I=\hat \II$ (defined in Section~\ref{def-hat-II}). Because of the hypothesis that $\rho^+$ and $\rho^-$ are irrational, conditions $(i)$, $(ii)$, $(iii)$ and $(iv)$ of Subsection \ref{geneinduc} are satisfied. As a consequence, Proposition \ref{inducp} applies. Note that the result extends also for points $x\in I=\II$ with the same proof. (However for any $0<m\leq N(x)$, the map $f^m$ is strictly decreasing and $f^{N-n_1}(x_1)=c^+$, $f^{N-n_2}(x_2)=c^-$.) The integer $N(x)$ will be denoted by $N^0(x)$. The map $T^0:x\mapsto f^{N^0(x)}(x)$ defined on points $x\in\mathbb{T}^1$ such that $N^0(x)<\infty$ is called the \textbf{first entry map} or the \textbf{first return map} when it is restricted to $\mathbb{T}^1\setminus\II$ or to $\II$ respectively.

The set of points $x\in\mathbb{T}^1$ such that $N^0(x)$ is finite is a union of disjoint compact intervals with non-empty interior. Thus, one gets a family $\mathcal{N}$ of intervals of $\mathbb{T}^1$ and a map $N^0:\mathcal{N}\to\mathbb{N} $ defined as $N^0(I)=N^0(x)$ with $x\in I\in\mathcal{N} $. By Proposition \ref{inducp} the function $N_0$ is well defined. 

Observe that an interval $I\in\mathcal{N}$ is either contained in $\II$ or $\mathbb{T}^1\setminus \II$. The set of intervals $I\in\mathcal{N}$ contained in $\II$ will be denoted by $\mathcal{N}^0$.

\subsection{First entry map to $A^+$}\label{feap} One can consider the case $I=A^+$ and $\hat I=\hat A^+$, see Subsection \ref{defarp}. Being $\rho^+$ and $\rho^-$ irrational, conditions $(i)$, $(ii)$, $(iii)$ and $(iv)$ of Subsection \ref{geneinduc} are satisfied and Proposition \ref{inducp} applies.  The entry time is denoted by $N^+(x)$. The map $T^+:x\mapsto f^{N^+(x)}(x)$ defined on points $x\in \mathbb{T}^1\setminus A^+$ such that $N^+(x)<\infty$ is called the \textbf{first entry map} to $A^+$. It is also the first entry map to $A^+$ for the dynamics induced by $g_+$\,. As before we generate a family of intervals $\mathcal{M}^+$.

\begin{remark}\label{feapr} Consider in Proposition~\ref{inducp} 
the interval $J'$ such that $J\subset J'\subset \hat J$ and $f^{N^+(x)}(J')=\II^+=[c^+,\hat z_R]$. Then for any $0\leq n < N^+(x)$, the interval $f^n(J')$ does not intersect $A^+$. This is due to the fact that in this case, $z_L=c^+$.
\end{remark}

In the same way, one will consider on $\mathbb{T}^1\setminus A^-$ the first entry map $T^-$ to $A^-$ for $f$ or $g_-$\,. It is defined on a family of intervals $\mathcal{M}^-$ with return time $N^-:\mathcal{M}^-\rightarrow \mathbb{N}$.

\subsection{First entry map to $A$}\label{frma} The last induced map we will use is the first entry map to $A$ with $I=A$ and $\hat I= \hat A$, see Subsection \ref{def-hat-II} . As before, conditions $(i)$, $(ii)$, $(iii)$ and $(iv)$ of Subsection \ref{geneinduc} are satisfied and Proposition \ref{inducp} applies (Recall how the orbits of $c^-$ and $c^+$ are ordered on $\mathbb{T}^1$, see~(\ref{bonordre}) at Section~\ref{bonordres}.) We will denote by $N_A(x)$ the integer $N(x)$ and $T_A$ the first entry map to $A$. The set of points $x\in\mathbb{T}^1\setminus A$ where $N_A(x)<\infty$ decomposes as a union of disjoint compact intervals over a family $\mathcal{S}$. The set of point $x$ with $N_A(x)=\infty$ is contained in a maximal invariant set $K$ in $[a^+,a^-]$.

By a well-known result of Ma\~n\'e \cite{mane}, $K$ is hyperbolic. A classical result for $C^2$-maps (see~\cite{demelovanstrien}, Chapter III, Theorem 2.6) shows that $K$ has zero-Lebesgue measure. More precisely:

\begin{proposition}\label{frmap} There exist $C>0$ and $\kappa>1$ such that for any $n\in\mathbb{N}$,$$\lambda\{x\in\mathbb{T}^1\setminus A,\;N_A(x)>n\}<C. \kappa^{-n}.$$

In particular, $\mathcal{S}$ is a measurable partition and for Lebesgue-almost every $x\in\mathbb{T}^1\setminus A$, $N_A(x)$ is finite and the map $N_A:\mathcal{S}\rightarrow \mathbb{N}$ is summable.
\end{proposition}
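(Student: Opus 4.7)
The plan is to reduce the proposition to two classical one-dimensional results that the authors have already flagged in the text preceding the statement: Ma\~n\'e's hyperbolicity theorem~\cite{mane}, and the standard exponential-decay estimate for the basin of a hyperbolic set of a $C^2$ interval map \cite[Section III.2, Theorem 2.6]{demelovanstrien}. The proposition is then essentially a bookkeeping assembly.

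First I would verify that $K$ satisfies Ma\~n\'e's hypotheses. By construction $K$ is compact and forward $f$-invariant; since $\{c^+,c^-\}\subset \interior(A)$, it is disjoint from the critical set, and by hypothesis (A3) it contains no non-hyperbolic periodic orbit. Ma\~n\'e's theorem therefore yields constants $C_0>0$ and $\lambda_0>1$ with $|Df^n(x)|\geq C_0\lambda_0^n$ for every $x\in K$ and every $n\geq 0$.

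Next, continuity of $Df$ and compactness of $K$ propagate this uniform expansion to a small open neighborhood $U\supset K$ with $\overline{U}\subset \mathbb{T}^1\setminus \interior(A)$: any orbit remaining in $U$ for $n$ steps satisfies $|Df^n(x)|\geq C_1\lambda_1^n$ for fixed $C_1>0$, $\lambda_1>1$. For $n$ large every point of $\{x\in\mathbb{T}^1\setminus A:\,N_A(x)>n\}$ has its first $n$ iterates in such a neighborhood of $K$, since a point of $\mathbb{T}^1\setminus A$ that stays away from $K$ is forced into $A$ in a bounded number of steps. The cited Theorem~2.6 of~\cite{demelovanstrien} then combines this expansion with a Koebe-type bounded distortion bound for $f^n$ on the pullbacks (this is where the $C^2$-assumption enters) and a Markov partition for $f|_K$ to deliver the exponential bound $\lambda\{N_A>n\}<C\kappa^{-n}$.

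From here the remaining conclusions are immediate: $\{N_A=\infty\}\subset K$ has Lebesgue measure zero by the decay estimate, so $\mathcal{S}$ covers $\mathbb{T}^1\setminus A$ up to a null set and is a measurable partition, and summability reduces to $\sum_{J\in\mathcal{S}}N_A(J)|J|=\sum_{n\geq 1}n\,\lambda\{N_A=n\}$, controlled by $\sum_n n\kappa^{-n}<\infty$. I expect the only genuine difficulty to lie in the quantitative decay: the number of monotone branches of $f^n$ meeting $\{N_A>n\}$ grows a priori exponentially in $n$, so the bare estimate $|Df^n|\geq C_1\lambda_1^n$ used branch-by-branch is not enough. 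Bounded distortion on pullbacks via $f\in C^2$ combined with the Markov structure of the hyperbolic set $K$ is precisely what absorbs the branch growth into a single effective rate $\kappa>1$, and this is the content packaged by the cited dMvS theorem.
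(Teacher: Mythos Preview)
Your proposal is correct and matches the paper's approach exactly: the paper does not give a detailed proof of this proposition but simply invokes Ma\~n\'e's hyperbolicity theorem~\cite{mane} for $K$ and then cites \cite[Chapter III, Theorem 2.6]{demelovanstrien} for the zero-measure/exponential-decay statement, presenting the proposition as the quantitative form of those two facts. Your write-up is a faithful unpacking of precisely these two citations, with the summability deduced from the geometric tail just as one would expect.
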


In the following we will discuss general properties of the first return maps. Being the specific interval of definition irrelevant, we will call the first return map to any interval simply \textbf{induced map}.

\section{Distortion properties of the induced maps}\label{distprop} In order to control the distortion of the induced maps introduced in Section \ref{induces}, we state some classical results:

\subsection{Koebe principle}\label{distest} For any non-empty intervals $J$ and $\hat J$ which are strictly contained in $\mathbb{T}^1$ and such that $\closure(J)\subset \interior(\hat J)$, we define$$\Di (J,\hat J)=\frac{|J|}{\distance(J,\boundary(\hat J))},$$where $\distance(J,\boundary(\hat J))$ denotes the length of the smallest component of $\hat J\setminus \closure(J)$. The Koebe principle for interval maps proved in~\cite{graczykss2}, Proposition 1 (see also~\cite{graczykss1}) remains true for circle maps. We get for the endomorphism $f$ the following control on the distortion:

\begin{theorem}[Koebe principle, \cite{graczykss2}]\label{koebethm} There is a constant $\delta_0>0$ which satisfies the following property: for any non-empty intervals $J,\hat J\varsubsetneq \mathbb{T}^1$ such that $\closure(J)\subset \interior(\hat J)$ and for any $N\in\mathbb{N}$ such that $f^N$ in restriction to $\interior(\hat J)$ is a diffeomorphism, we have,$$\forall x,y\in J,\quad\frac{\D f^N(x)}{\D f^N(y)}\leq (1+\Di (f^N(J),f^N(\hat J)))^2\exp(\delta_0\sum_{n=0}^{N-1}|f^n(J)|).$$
\end{theorem}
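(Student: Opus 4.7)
The plan is to adapt the standard Koebe distortion estimate for $C^2$ interval maps, due to Graczyk, Sands and \'Swi\c{a}tek, to degree-one circle endomorphisms. The only genuine adaptation needed concerns the geometry: since $\hat J \subsetneq \mathbb{T}^1$ and $f^N$ restricted to $\interior(\hat J)$ is a diffeomorphism, each intermediate iterate $f^n$ on $\interior(\hat J)$ (for $0 \le n \le N$) is also a diffeomorphism. Consequently every image $f^n(\hat J)$ is a homeomorphic copy of $\hat J$, and in particular a proper interval of $\mathbb{T}^1$. Passing to lifts in $\mathbb{R}$ at each stage reduces the question to the known interval-map statement.

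The heart of the argument is the cross-ratio technique. For nested intervals $M \subset T \subsetneq \mathbb{T}^1$ with $\closure(M) \subset \interior(T)$, one introduces a Poincar\'e-type cross-ratio
$$
\mathcal{B}(M,T) \;=\; \frac{|M|\,|T|}{|L|\,|R|}\,,
$$
where $L,R$ are the two components of $T \setminus \closure(M)$. For $C^2$ maps restricted to an interval on which they are diffeomorphic, one has an infinitesimal expansion estimate of the shape
$$
\log \mathcal{B}(f(M),f(T)) \;\ge\; \log \mathcal{B}(M,T) \;-\; C_0 \, |f(T)|,
$$
with $C_0$ depending only on the $C^2$ norm of $f$. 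I would apply this to the pairs $(f^n(J),f^n(\hat J))$ and telescope from $n=0$ up to $n=N-1$. Combined with the Koebe-space hypothesis, which lets one bound $|f^n(\hat J)|$ by a multiple of $|f^n(J)|$, this produces a lower bound on $\mathcal{B}(J,\hat J)$ in terms of $\mathcal{B}(f^N(J),f^N(\hat J))$ and $\exp\!\bigl(-\delta_0 \sum_{n=0}^{N-1} |f^n(J)|\bigr)$.

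The final step converts the cross-ratio bound into a derivative-ratio bound. For any diffeomorphism $\phi$ on an interval containing $\hat J$, the ratio $\D\phi(x)/\D\phi(y)$ for $x,y \in J$ is controlled by $\mathcal{B}(J,\hat J)$ up to the Koebe factor $(1+\Di(\phi(J),\phi(\hat J)))^2$: the cross-ratio encodes ratios of adjacent interval lengths, and by the mean value theorem these differ from the derivative ratio precisely by that factor. Specializing $\phi = f^N$ yields the claimed estimate.

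The main technical obstacle is the cross-ratio distortion inequality of the second paragraph, which is the core of the Graczyk-Sands-\'Swi\c{a}tek argument: its proof relies on a careful Taylor expansion of $f$ and a delicate bookkeeping that produces a bound linear, rather than quadratic, in $|f(T)|$. For the circle-map version no new idea is needed beyond the observation that every iterate of the pair $(J,\hat J)$ lies inside a proper interval of $\mathbb{T}^1$, so the entire interval-map argument transcribes verbatim.
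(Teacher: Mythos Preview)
The paper does not prove this theorem: it quotes it from Graczyk--Sands--\'Swi\c{a}tek (reference~\cite{graczykss2}, Proposition~1), adding only the one-line remark that the interval-map statement ``remains true for circle maps.'' Your proposal is therefore in line with---indeed more detailed than---what the paper offers: you correctly attribute the result, correctly observe that the passage from intervals to $\mathbb{T}^1$ is handled by lifting (since each $f^n(\hat J)$ is a proper subinterval of the circle, one may work in $\mathbb{R}$ throughout), and you outline the cross-ratio mechanism underlying the cited argument.

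One step in your sketch deserves a flag. Telescoping the cross-ratio inequality you state produces a factor $\exp\bigl(C_0\sum_n|f^n(\hat J)|\bigr)$ involving the \emph{large} intervals, whereas the theorem's bound has $\sum_n|f^n(J)|$. You assert that ``the Koebe-space hypothesis \dots\ lets one bound $|f^n(\hat J)|$ by a multiple of $|f^n(J)|$,'' but that hypothesis only supplies space at the final time $N$; at intermediate times $0\le n<N$ this comparison is precisely what the Koebe principle is meant to deliver, so as written the step is circular. The actual argument in~\cite{graczykss2} handles this via a more careful induction (compare how the present paper itself bootstraps in the proof of Proposition~\ref{distorp1}, establishing $|f^{N-n}(\Hat{\Hat J})|\le 2|f^{N-n}(J)|$ backwards in $n$). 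This is a genuine technical wrinkle in your outline, though not a conceptual obstacle.
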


\subsection{Hyperbolicity} We will need to show that some maps are hyperbolic. This was proved by R.~Ma\~n\'e in~\cite{mane} for one-dimensional $C^2$-maps. We state and prove here (Appendix \ref{appMa}) an analogous result for induced maps.

\begin{theorem}\label{maneinduce} Let $\mathcal{N}^0$ be a family of disjoint compact subintervals of $(0,1)$ with non-empty interior. Let $T:\mathcal{J}\rightarrow [0,1]$ be a map defined on $\mathcal{J}=\cup_{J\in\mathcal{N}^0}J$ that satisfies:

\begin{enumerate}
\item for any $J\in\mathcal{N}^0$, the restriction of $T$ on $J$ is a $C^1$-diffeomorphism onto $[0,1]$;
\item there exists a constant $D_d$ such that for any $J\in\mathcal{N}^0$, $$\forall x,y\in J,\; \frac{\D T(x)}{\D T(y)}\leq 1+D_d|T(x)-T(y)|;$$
\item any periodic orbit of $T$ is hyperbolic repulsive.
\end{enumerate}

Then, $T$ is hyperbolic: there exist some constants $C>0$ and $\kappa>1$ such that for any orbit $x, T(x), \cdots, T^{n-1}(x)$, in $\mathcal{J}$,$$|\D T^n(x)|\geq C.\kappa^n.$$
\end{theorem}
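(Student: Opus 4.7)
The strategy is the classical one of Mañé~\cite{mane}, specialized to the Markov setting where all branches are full. The plan proceeds in three main steps: translate the hyperbolicity conclusion into cylinder-shrinking, argue by contradiction using a Pliss-type selection, and then close up the resulting slowly-expanding orbit into a genuine periodic orbit, contradicting (3).

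First, I would reformulate the statement using bounded distortion. The hypothesis (2) is invariant under composition: if $T^n$ is a diffeomorphism from an interval $J$ onto $[0,1]$, then a telescoping product gives $\tfrac{\D T^n(x)}{\D T^n(y)}\leq K$ for some universal $K=K(D_d)$ and all $x,y\in J$. Combined with $\int_J \D T^n=1$, this yields $|\D T^n(x)|\asymp |J|^{-1}$. In particular, if $\omega_n(x)$ denotes the cylinder containing $x$ (the maximal interval on which $T^n$ is a diffeomorphism onto $[0,1]$), then $|\D T^n(x)|\asymp |\omega_n(x)|^{-1}$ uniformly in $n$ and $x$. The theorem therefore reduces to showing exponential shrinking of cylinders: $|\omega_n(x)|\leq C\kappa^{-n}$ for some $\kappa>1$.

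Assume by contradiction that no such exponential rate exists. Then there exist $x_k\in\mathcal{J}$ and $n_k\to\infty$ with $\tfrac{1}{n_k}\log|\D T^{n_k}(x_k)|\to 0$. Capping derivatives at a large threshold $H$ and applying a Pliss-type lemma to the sequence $a_i=\log|\D T(T^i x_k)|$ along the segment of length $n_k$, I would extract intermediate times $j_k$ and lengths $m_k\to\infty$ such that the point $y_k:=T^{j_k}(x_k)$ satisfies
\[
\tfrac{1}{m}\log|\D T^m(y_k)|\leq \varepsilon_k\to 0\quad\text{for every }1\leq m\leq m_k.
\]
After passing to a subsequence, $y_k\to y^*\in[0,1]$. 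Using the bounded-distortion estimate from Step~1 applied to cylinders $\omega_{m_k}(y_k)$, these cylinders have lengths $\geq c(1+\varepsilon_k)^{-m_k}$, so the full-branch property (1) lets us track long nested Markov itineraries around $y^*$.

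The closing step produces the contradiction. Since $\mathcal{N}^0$ has only countably many elements and the orbit $y_k, T(y_k),\dots,T^{m_k}(y_k)$ visits such branches, for $k$ large enough one can find times $0\leq s_k<t_k\leq m_k$ with $T^{s_k}(y_k)$ and $T^{t_k}(y_k)$ lying in the \emph{same} branch $J_{i_k}\in\mathcal{N}^0$. Because $T^{t_k-s_k}$ is a diffeomorphism from the cylinder of $T^{s_k}(y_k)$ onto $[0,1]$ which contains $J_{i_k}$, the intermediate value theorem furnishes a periodic point $p_k$ of period dividing $t_k-s_k$ inside that cylinder. Bounded distortion then yields
\[
|\D T^{t_k-s_k}(p_k)|\leq K\cdot|\D T^{t_k-s_k}(T^{s_k}y_k)|\leq K(1+\varepsilon_k)^{t_k-s_k}.
\]
The main obstacle, and the technical heart of the argument, is to sharpen this bound from sub-exponential to an actual multiplier $\leq 1$. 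The way to do this, inspired by Mañé, is to iterate the Pliss selection: first one selects times where forward expansion is almost flat; then, using that \emph{some} branch lengths must be close to $1$ (otherwise uniform expansion would be immediate), one re-selects within the orbit a segment on which the cumulative log-derivative is actually $\leq 0$. The closing construction above, applied to such a segment, produces a periodic orbit of $T$ with multiplier $\leq 1$, in direct contradiction with hypothesis (3). This completes the proof.
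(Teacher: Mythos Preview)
Your reduction to exponential cylinder shrinking via bounded distortion is correct and matches the paper's opening step. The gap is exactly where you locate it. Pliss-plus-closing yields periodic points $p_k$ of period $q_k$ with $|\D T^{q_k}(p_k)|\leq K(1+\varepsilon_k)^{q_k}$, and this does \emph{not} contradict hypothesis~(3), which only asserts that each multiplier exceeds~$1$, with no uniform margin. Your proposed repair---re-selecting a sub-segment with cumulative log-derivative $\leq 0$---cannot work in general: if every pointwise derivative along the Pliss segment satisfies $|\D T(T^j y_k)|>1$ (fully compatible with $\tfrac{1}{m}\log|\D T^m(y_k)|\leq\varepsilon_k$; think of an orbit that visits only branches of length just below~$1$), then every partial sum of log-derivatives is strictly positive and no such sub-segment exists. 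A secondary issue: your displayed bound on $|\D T^{t_k-s_k}(T^{s_k}y_k)|$ requires a \emph{lower} bound on $|\D T^{s_k}(y_k)|$, which the one-sided Pliss inequality you stated does not supply.

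The paper (Appendix~\ref{appMa}) avoids the contradiction route and uses~(3) constructively. Step~1 proves that the maximal $n$-cylinder length $e_n$ tends to~$0$ by a no-wandering-interval argument (a maximal interval with all iterates in $\mathcal{J}$ has disjoint images and, by distortion, could be thickened, contradicting maximality). Step~2 is the quantitative heart: every period-$q$ orbit has multiplier $\geq C_d/e_q$ (Lemma~\ref{lgdp}); together with~(3) and the finiteness of short periodic orbits this gives a uniform $\kappa_0>1$ valid on a full cylinder around each periodic point (Corollary~\ref{gper}). Step~3 then considers, along an arbitrary orbit $x_0,\dots,x_n$, the cylinders $I_k\ni x_k$ of order $n-k$; grouping them into maximal disjoint chains, the periodic bound forces geometric decay within each chain, so $\sum_k|I_k|\leq L$ uniformly in~$n$. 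This yields uniform distortion of $T^n$ on every $n$-cylinder, hence $|\D T^n(x)|\gtrsim 1/e_n$; exponential decay of $e_n$ then follows from the resulting submultiplicativity $e_{n+m}\lesssim e_n e_m$ combined with Step~1.
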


\subsection{Distortion of the induced maps}\subsubsection{} Let us consider $\varepsilon>0$ small and an induced map $T\in\{T^0, T^-,T^+,T_A\}$ for the intervals $I\subset \hat I$. For any return interval $J$ of $T$ with order $N$ and extension $\hat J$, we define $\Hat{\Hat{J}}\subset \hat J$ to be the unique compact interval contained in $\hat J$ such that both components of $T(\Hat{\Hat{J}}\setminus J)$ have length $\varepsilon|I|$.

\begin{proposition}\label{distorp1} If $\varepsilon>0$ is small enough, there exists some constant $D_1>0$ such that: for any induced map $T\in\{T^0, T^-,T^+,T_A\}$ and any return interval $J$ associated to $T$ with order $N$, we have,$$\forall x,y\in \Hat{\Hat{J}},\;\left|\frac{\D f^N(x)}{\D f^N(y)}\right|\leq 1+D_1\dd (f^N(x),f^N(y)).$$
\end{proposition}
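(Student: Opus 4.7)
The plan is to apply the Koebe principle (Theorem~\ref{koebethm}) to the diffeomorphism $f^{N}\colon \hat J\to \hat I$ twice: a first global application to the pair $\Hat{\Hat{J}}\subset \hat J$ will produce a uniform multiplicative distortion bound on $\Hat{\Hat{J}}$, and a second application to a sub-pair $[x,y]\subset \hat J$ --- whose Koebe ratio will itself be of order $\dd(f^{N}(x),f^{N}(y))$ --- will convert this uniform bound into the Lipschitz-type estimate claimed in the statement. Both applications require uniform control on the two ingredients of Theorem~\ref{koebethm}, independently of the induced map $T\in\{T^{0},T^{-},T^{+},T_{A}\}$, of the return interval $J$, and of the order $N$.

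The control on the Koebe space is immediate from the definition of $\Hat{\Hat{J}}$: each component of $f^{N}(\Hat{\Hat{J}})\setminus I$ has length exactly $\varepsilon|I|$ while $f^{N}(\hat J)=\hat I$, so
$$\Di\bigl(f^{N}(\Hat{\Hat{J}}),f^{N}(\hat J)\bigr)\le \frac{|\hat I|}{\varepsilon|I|},$$
a quantity depending only on $\varepsilon$ and on the (finitely many) fixed pairs $(I,\hat I)$. The main obstacle is the uniform bound on the geometric sum $\sum_{n=0}^{N-1}|f^{n}(\Hat{\Hat{J}})|$. I would first establish $\sum_{n=0}^{N-1}|f^{n}(J)|\le C_{0}$ by exploiting two facts: the iterates $f^{n}(J)$ with $1\le n<N$ avoid $I$ (Proposition~\ref{inducp}), and the injectivity of $f^{N}|_{\hat J}$ combined with the combinatorial ordering of the critical orbits --- equation~(\ref{bonordre}) and Proposition~\ref{rotationp} --- forces these iterates to have uniformly bounded overlap multiplicity on $\mathbb{T}^{1}\setminus I$. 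Since $|\mathbb{T}^{1}|=1$, this yields $C_{0}\le C$. A bootstrap through Koebe on the pair $(J,\hat J)$, whose Koebe space is already bounded, then promotes this to $|f^{n}(\Hat{\Hat{J}})|\le C_\varepsilon|f^{n}(J)|$ and hence the desired uniform sum bound $\sum_{n=0}^{N-1}|f^{n}(\Hat{\Hat{J}})|\le C_\varepsilon C_{0}$.

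Feeding these two bounds into Theorem~\ref{koebethm} produces a constant $C^{*}$ such that $|\D f^{N}|$ varies by at most a factor $C^{*}$ on $\Hat{\Hat{J}}$. A further application of Koebe to $f^{N-n}$ on the pair $f^{n}(\Hat{\Hat{J}})\subset f^{n}(\hat J)$ (whose hypotheses are automatic from what we already have, the relevant tail of the geometric sum being smaller) also yields bounded distortion of $f^{N-n}$ on $f^{n}(\Hat{\Hat{J}})$. Combining these two distortion bounds one obtains, for every sub-interval $[x,y]\subset \Hat{\Hat{J}}$,
$$|f^{n}([x,y])|\le C^{**}\,\frac{|f^{n}(\Hat{\Hat{J}})|}{|f^{N}(\Hat{\Hat{J}})|}\,\dd(f^{N}(x),f^{N}(y)),$$
so that $\sum_{n=0}^{N-1}|f^{n}([x,y])|\le K\,\dd(f^{N}(x),f^{N}(y))$ with $K=C^{**}C_\varepsilon C_{0}/|I|$. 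A second application of Theorem~\ref{koebethm} to the pair $[x,y]\subset \hat J$ --- whose Koebe ratio is now $\Di(f^{N}([x,y]),\hat I)\le \dd(f^{N}(x),f^{N}(y))/(\varepsilon|I|)$ --- then gives
$$\frac{|\D f^{N}(x)|}{|\D f^{N}(y)|}\le \bigl(1+\mathcal{O}(\dd)\bigr)^{2}\exp\bigl(\delta_{0}K\,\dd\bigr)=1+\mathcal{O}(\dd),$$
where $\dd$ abbreviates $\dd(f^{N}(x),f^{N}(y))$. Since $\dd\le 1$, higher-order terms are absorbed into a single constant $D_{1}$, yielding the proposition. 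The hard part is clearly the bound $\sum_{n=0}^{N-1}|f^{n}(J)|\le C_{0}$: it is the only step that is not purely a formal consequence of Koebe, and it is precisely where the bimodal/rotation combinatorics developed in Section~\ref{rotation} must be plugged in.
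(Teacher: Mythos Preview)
Your overall two-stage Koebe strategy matches the paper's, but you have inverted where the difficulty lies. The bound $\sum_{n=0}^{N-1}|f^n(J)|\le C_0$ needs no rotation combinatorics: the iterates $J,f(J),\dots,f^{N-1}(J)$ are pairwise disjoint by the first-entry property alone (if $f^i(a)=f^j(b)$ for $a,b\in J$ and $0\le i<j<N$, then $f^{N-j+i}(a)=f^{N}(b)\in I$ with $1\le N-j+i<N$, contradicting that $N$ is the entry time of $a$), so $\sum_n|f^n(J)|\le 1$ trivially. Equation~(\ref{bonordre}) and Proposition~\ref{rotationp} play no role here.

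The actual gap is the ``bootstrap'' you pass over in one line. Koebe applied to $f^N$ on $(J,\hat J)$ bounds only the distortion of $f^N$ on $J$; it says nothing about the intermediate maps $f^n$ on the \emph{larger} interval $\Hat{\Hat{J}}\supset J$, which is what $|f^n(\Hat{\Hat{J}})|\le C_\varepsilon|f^n(J)|$ would require. Applying Koebe instead to $f^{N-n}$ on a pair containing $f^n(\Hat{\Hat{J}})$ needs precisely the sum $\sum_{k\ge n}|f^k(\Hat{\Hat{J}})|$ you are trying to bound, so the argument is circular as written. The paper breaks this circle by a backward induction: assuming the distortion of $f^m$ on $f^{N-m}(\Hat{\Hat{J}})$ is at most a fixed $K$ for every $m\le n_0$, one deduces $|f^{N-m}(\Hat{\Hat{J}})|\le 2|f^{N-m}(J)|$ for those $m$, and the Koebe step for $m=n_0+1$ then uses $\sum_{m=1}^{n_0+1}|f^{N-m}(\Hat{\Hat{J}})|\le |f^{N-n_0-1}(\Hat{\Hat{J}})|+2\sum_{m=1}^{n_0}|f^{N-m}(J)|\le 1+2=3$, the single new term being handled by the crude bound $|\cdot|\le 1$. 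The induction closes only when $\varepsilon$ is small enough that $2\varepsilon K\le 1$, and this is where the smallness hypothesis on $\varepsilon$ actually enters.
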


\begin{proof} We note $D_m$ the maximum over (see Section~\ref{distest}):$$\Di (\Hat{\Hat{\II}},\hat \II), \;\Di (\Hat{\Hat{A}},\hat A^+),\; \Di (\Hat{\Hat{A}},\hat A^-), \;\Di (\Hat{\Hat{A}},\hat A).$$

Then, one defines (see Section~\ref{distest}) $K=(1+D_m)^{2}\exp(3\delta_0)$. By shrinking $\varepsilon$ again, one may assume
\begin{equation}\label{cc1}
2\varepsilon K\leq 1.
\end{equation}

We prove inductively that for any $0\leq n\leq N$,
\begin{enumerate}
\item\label{propi1} $|f^{N-n}(\Hat{\Hat{J}})|\leq 2|f^{N-n}(J)|$;
\item\label{propi2} $f^{n}$ has distortion bounded by $K$ on $f^{N-n}(\Hat{\Hat{J}})$:$$\forall x,y\in f^{N-n}(J_\varepsilon),\;\left|\frac{\D f^n(x)}{\D f^n(y)}\right|\leq K.$$
\end{enumerate}

Those properties are obvious for $n=0$ by definition of $\Hat{\Hat{J}}$. Let us assume that~\ref{propi2}) has been proved for any $n\leq n_0$. We first remark that~\ref{propi1}) is a direct consequence of~\ref{propi2}): we get from Koebe Theorem~\ref{koebethm} and~(\ref{cc1}),$$|f^{N-n_0}(\Hat{\Hat{J}}\setminus J)|\leq K\frac{|f^N(\Hat{\Hat{J}}\setminus J)|}{|f^N(J)|}{|f^{N-n_0}(J)|}\leq K2\varepsilon {|f^{N-n_0}(J)|}\leq {|f^{N-n_0}(J)|}.$$

Let us assume that $n_0\leq N-1$. We now prove~\ref{propi2}) for $n_0+1$. The sum $\sum_{n=1}^{n_0+1}|f^{N-n}(\Hat{\Hat{J}})|$ is bounded by $|f^{N-n_0-1}(\Hat{\Hat{J}})|+2\sum_{n=1}^{n_0}|f^{N-n}(J)|$ which is less that $3$. The quantity $\Di (f^N(\Hat{\Hat{J}}), f^N(\hat J))$ is bounded by $D_m$. Hence, Koebe Theorem~\ref{koebethm} gives the announced bound (recall how $K$ has been defined).

We take now some interval $[x,y]\subset \Hat{\Hat{J}}$.

By~\ref{propi2}), we get$$\sum_{k=0}^{N-1}\dd (f^k(x),f^k(y))\leq K\frac{\dd (f^N(x),f^N(y))}{|f^N(J)|}\sum_{k=0}^{n-1}|f^k(J)|\leq K\frac{\dd (f^N(x),f^N(y))}{|f^N(J)|}.$$

This gives the following estimate:
\begin{equation*}
\begin{split}
\left|\frac{\D f^N(x)}{\D f^N(y)}\right|&\leq{(1+D_m)^2}\exp\left(\delta_0 K \frac{\dd (f^N(x),f^N(y))}{|f^N(J)|}\right)\\
&\leq 1+D_1\dd(f^N(x),f^N(y)),
\end{split}
\end{equation*}
for some new uniform constant $D_1>0$.
\end{proof}

\subsubsection{}\label{distorp2s} The distortion is also bounded when induced maps are composed:

\begin{proposition}\label{distorp2} There exists $D_2>0$ which satisfies: let $T_0,\cdots, T_n$ be a sequence of induced maps in $\{T^0,T^-,T^+,T_A\}$ and for any $0\leq k\leq n$ a return interval $J_k$ associated to $T_k$ with order $N_k$. One assumes furthermore that for any $0\leq k<n$,
\begin{equation*}
\begin{split}
J_{k+1}\subset f^{N_k}(J_k)\setminus\interior(\II),&\quad \hat J_{k+1}\subset f^{N_k}(\hat J_k),\\
T_k\in \{T^-,T^+\}\;\Longrightarrow\; &J_{k+1}\subset A_L^+\cup A_R^-.
\end{split}
\end{equation*}

Let us denote $J=J_0\cap T_0^{-1}(J_1)\cap\cdots\cap (T_{n-1}\circ\cdots\circ T_0)^{-1}(J_n)$. Then, the distortion of $T=T_n\circ\cdots\circ T_0$ on $J$ is bounded:$$\forall x,y\in J,\; \left|\frac{\D T(x)}{\D T(y)}\right|\leq D_2.$$
\end{proposition}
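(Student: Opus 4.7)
The plan is to combine the single-step distortion estimate of Proposition~\ref{distorp1} with a uniform expansion of the intermediate induced maps. For $x,y\in J$ set $\tilde x_k := (T_{k-1}\circ\cdots\circ T_0)(x)$, $\tilde y_k := (T_{k-1}\circ\cdots\circ T_0)(y)$ and $d_k := \dd(\tilde x_k,\tilde y_k)$, for $0\leq k\leq n+1$. Since $\tilde x_k,\tilde y_k\in J_k\subset \Hat{\Hat{J}}_k$, Proposition~\ref{distorp1} applied to each $T_k=f^{N_k}$ gives
$$\left|\frac{Df^{N_k}(\tilde x_k)}{Df^{N_k}(\tilde y_k)}\right|\leq 1+D_1\,d_{k+1}.$$
Multiplying over $k=0,\dots,n$ and passing to logarithms yields
$$\log\left|\frac{DT(x)}{DT(y)}\right|\leq D_1\sum_{k=1}^{n+1}d_k,$$
so the problem reduces to bounding $\sum_{k=1}^{n+1}d_k$ by a universal constant.

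To this end I would first use the combinatorial hypotheses to show that $J_k\subset A_L^+\cup A_R^-$ for every $1\leq k\leq n$. Indeed, $T_k=T^0$ is excluded when $k<n$ because $f^{N_k}(J_k)=\II$ would force $J_{k+1}$ to have empty interior. If $T_k=T^+$ then $f^{N_k}(J_k)=A^+$; using $A^+\setminus\interior(\II)=\{c^+\}\cup[c^-,b^+]$ together with $J_{k+1}\subset A_L^+\cup A_R^-$ and the fact that $a^-\leq b^+$ for $M_0$ large, one obtains $J_{k+1}\subset A_R^-$. Symmetrically, $T_k=T^-$ gives $J_{k+1}\subset A_L^+$, while $T_k=T_A$ directly gives $J_{k+1}\subset A\setminus\interior(\II)=A_L^+\cup A_R^-$. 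Hence $|J_k|\leq\eta:=|A_L^+|+|A_R^-|$ for $1\leq k\leq n$. On the other hand each target $f^{N_k}(J_k)$ lies in $\{\II,A^+,A^-,A\}$ and therefore contains $\II$, so $|f^{N_k}(J_k)|\geq|\II|$.

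The constant $K$ appearing in the proof of Proposition~\ref{distorp1} bounds the distortion of $f^{N_k}$ on $\Hat{\Hat{J}}_k\supset J_k$; combined with the mean value theorem this yields
$$\frac{d_{k+1}}{d_k}\geq K^{-1}\,\frac{|f^{N_k}(J_k)|}{|J_k|}\geq K^{-1}\,\frac{|\II|}{\eta}\quad\text{for every }1\leq k\leq n.$$
Choosing $M_0$ large enough so that $\eta<K^{-1}|\II|$, one gets $\mu:=K^{-1}|\II|/\eta>1$. Backward iteration then gives $d_k\leq\mu^{-(n+1-k)}d_{n+1}$ for $1\leq k\leq n+1$, and since $d_{n+1}\leq|f^{N_n}(J_n)|\leq 1$, one concludes $\sum_{k=1}^{n+1}d_k\leq 1/(1-\mu^{-1})$. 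Setting $D_2:=\exp\bigl(D_1/(1-\mu^{-1})\bigr)$ closes the argument.

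The main obstacle is the interplay between the distortion constant $K$ and the lengths $|A_L^+|,|A_R^-|$: both depend on the parameter $M_0$ of Section~\ref{defm0}, and one must verify that $K$ stays bounded as $M_0$ grows so that $\eta<K^{-1}|\II|$ can be secured simultaneously. This amounts to checking that the Koebe spaces $\Di(\Hat{\Hat{A}},\hat A)$, $\Di(\Hat{\Hat{A}},\hat A^\pm)$, $\Di(\Hat{\Hat{\II}},\hat\II)$ remain controlled as $M_0\to\infty$, after which a single sufficiently large choice of $M_0$ fixes all the constants of the construction.
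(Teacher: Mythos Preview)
Your argument is correct and follows essentially the same route as the paper: use Proposition~\ref{distorp1} for the single-step distortion, show that the intermediate distances $d_k$ grow geometrically because each $J_k$ (for $k\geq 1$) lies in the tiny set $A_L^+\cup A_R^-$ while $f^{N_k}(J_k)\supset\II$, and telescope. The only cosmetic difference is that the paper pairs consecutive maps to obtain $|D(T_{k+1}\circ T_k)|\geq 4$ (this is why it wants $|A_L^+|,|A_R^-|\lesssim |\II|^2$, to absorb the possibly large $|J_0|$ in the first pair), whereas you get single-step expansion for $k\geq 1$ and never need to control $d_0$; your packaging is in fact slightly cleaner. Your stated obstacle about the dependence of $K$ (equivalently $D_1$) on $M_0$ is exactly the same circularity the paper's proof faces and leaves implicit, so it is not a gap in your proposal relative to the paper.
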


\begin{proof} For $M_0$ large enough,$$|A^-_R|,|A^+_L|\leq \frac{(1+D_1)^2}{4}|\II|^2.$$

By Proposition~\ref{distorp1}, for $k\geq 0$, $x\in J_k$
the derivative $DT_k(x)$ is bounded from below:
$$\D T_k(x)\geq (1+D_1)\frac{|T_k(J_k)|}{|J_k|}.$$
Since $J_{k+1}\cap \interior(\II)=\emptyset$,
the return map $T^0$ may only appear for $T_n$. By definition, $T_A$ can not appear for two consecutive times.
One deduces $DT_{k+1}\circ T_k\geq 4$ for any $0\leq k<n-1$. Consequently, $|J|$ decreases exponentially with $n$ and for $x,y\in J$:
\begin{equation*}
\begin{split}
\left|\frac{\D T(x)}{\D T(y)}\right|&\leq\prod_{k=0}^{n}\left|\frac{\D T_k(T_{k-1}\circ\cdots\circ T_0(x))}{\D T_k(T_{k-1}\circ \cdots \circ T_0(y))}\right|\\
&\leq\prod_{k=0}^{n} \left(1+D_1\dd (T_k\circ\cdots\circ T_0(x),T_k\circ\cdots\circ T_0(y))\right)\\
&\leq D_2:=(1+D_1)\prod_{k=0}^{n-1} (1+D_12^{-k}).
\end{split}
\end{equation*}
\end{proof}

\subsubsection{}\label{distortt} We will precise the previous proposition in the case all the maps we compose are $T^0$:

\begin{proposition}\label{distorp3} There exists a constant $D_3$ such that for any sequence of return intervals $J_0,\cdots, J_n$ in $\mathcal{N}^0$, the distortion of $(T^0)^n$ on $J=J_0\cap (T^0)^{-1}(J_1)\cap\cdots\cap (T^0)^{-n+1}(J_n)$ is bounded by $D_3$:$$\forall x,y\in J,\;\left|\frac{\D(T^0)^n(x)}{\D(T^0)^n(y)}\right|\leq 1+D_3\dd((T^0)^n(x),(T^0)^n(y)).$$
\end{proposition}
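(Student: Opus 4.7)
The plan is to deduce Proposition \ref{distorp3} as a telescoping refinement of Proposition \ref{distorp1}, using that $T^0$ is uniformly expanding so that backward orbits along the cylinder $J$ contract geometrically. The three main ingredients are: (a) the ``affine-like'' one-step distortion bound from Proposition \ref{distorp1} applied to each factor $T^0$, (b) uniform hyperbolicity of $T^0$ from Theorem \ref{maneinduce}, and (c) a summation argument that converts the one-step bounds into a single bound in terms of the final distance.

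First, I would verify that Theorem \ref{maneinduce} applies to the Markov map $T^0:\bigcup_{J\in\mathcal{N}^0}J\to\II$ (after the obvious affine reparametrization of $\II$ by $[0,1]$). Hypothesis (1) is the surjectivity of each branch onto $\II$, which is part of the conclusion of Proposition \ref{inducp}; hypothesis (2) is exactly Proposition \ref{distorp1} restricted to $J\subset\Hat{\Hat{J}}$; hypothesis (3) holds because periodic orbits of $T^0$ are periodic orbits of $f$, hence hyperbolic repelling by (A3). This yields constants $C>0$ and $\kappa>1$ such that $|\D(T^0)^m(\xi)|\geq C\kappa^m$ on any admissible orbit.

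Next, combining this expansion with the bounded distortion of $(T^0)^{n-j}$ on the return interval containing $(T^0)^j(x)$, for any $0\leq j\leq n$ and $x,y\in J$,
$$\dd\!\bigl((T^0)^j(x),(T^0)^j(y)\bigr)\;\leq\; C'\kappa^{-(n-j)}\,\dd\!\bigl((T^0)^n(x),(T^0)^n(y)\bigr).$$
Now I apply Proposition \ref{distorp1} to each factor, take logarithms and use $\log(1+t)\leq t$:
$$\log\left|\frac{\D(T^0)^n(x)}{\D(T^0)^n(y)}\right|\;\leq\;D_1\sum_{j=1}^{n}\dd\!\bigl((T^0)^j(x),(T^0)^j(y)\bigr)\;\leq\;\frac{D_1C'}{1-\kappa^{-1}}\,\dd\!\bigl((T^0)^n(x),(T^0)^n(y)\bigr).$$
Since the final distance is bounded by $|\II|$, the exponential inequality $e^t\leq 1+t\,e^{t_{\max}}$ for $t\in[0,t_{\max}]$ converts this logarithmic bound into the claimed affine bound with $D_3=\frac{D_1C'}{1-\kappa^{-1}}\exp\!\bigl(\frac{D_1C'|\II|}{1-\kappa^{-1}}\bigr)$.

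The main obstacle is really just the invocation of Theorem \ref{maneinduce}: once exponential expansion of $T^0$ is in hand, the rest is a straightforward telescoping computation. One subtlety worth checking is that the distortion estimate of Proposition \ref{distorp1} is used pointwise on $J$ rather than on the larger Koebe neighborhood $\Hat{\Hat{J}}$, which causes no difficulty, and that the uniform expansion rate $\kappa$ is independent of the cylinder, so that the geometric sum bound is uniform in $n$ and in the choice of $(J_0,\dots,J_n)$.
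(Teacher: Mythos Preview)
Your proof is correct and follows essentially the same route as the paper's own argument: invoke Theorem~\ref{maneinduce} to obtain uniform expansion of $T^0$, deduce exponential contraction of the intermediate distances along the cylinder $J$, and then telescope the one-step affine distortion bound from Proposition~\ref{distorp1} into the desired bound $1+D_3\,\dd((T^0)^n(x),(T^0)^n(y))$. The paper compresses all of this into the single sentence ``Thus the length of $J$ decreases exponentially with $n$. One concludes as in Proposition~\ref{distorp1} and~\ref{distorp2},'' and you have simply written out what that sentence means.
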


\begin{proof}[Proof of Proposition \ref{distorp3}] By Proposition~\ref{distorp1}, Theorem~\ref{maneinduce} proved in Appendix \ref{appMa} holds for $T^0$ on the interval $\interior \II$. Thus the length of $J$ decreases exponentially with $n$. One concludes as in Proposition~\ref{distorp1} and~\ref{distorp2}.
\end{proof}

\section{Markov properties of the first return map}\label{markovprop}

We will prove in this section that the maps constructed before are Markov maps:

\begin{definition}\label{markovdef}

\renewcommand{\theenumi}{\roman{enumi}}

A map $T:\interior(\II)\rightarrow\interior(\II)$ is a \textbf{Markov map} of $\interior(\II)$ if there exists a finite or countable family $\mathcal{N}=\{I_m\}$ of disjoint intervals in $\interior(\II)$ such that:
\begin{enumerate}
\item $\interior(\II)\setminus\cup_{I_m\in\mathcal{N}} I_m$ has zero Lebesgue measure.
\item\label{induce1} For any $I_m\in\mathcal{N}$, the map $T$ is a $C^1$-diffeomorphism from $\interior(I_m)$ onto $\interior(\II)$;
\item\label{markovdef2} The distortion is bounded: for some $D_0>0$, any $n\in\mathbb{N}$, and any interval $J\subset\interior(\II)$ such that for all $1\leq j\leq n$, $T^j(J)$ is contained in some interval of $\mathcal{N}$, one has$$\forall x,y\in J,\quad\frac{\D T^n(x)}{\D T^n(y)}\leq 1+D_0|T^n(y)-T^n(x)|.$$
\end{enumerate}
\end{definition}

One can find more general definitions of Markov maps. However, Definition \ref{markovdef} is enough for our purposes in this paper.

\bigskip

The aim of this section is to prove two propositions which will play a key role in the proof of our main theorems. Using the notation introduced in \ref{frmi}, we claim:

\begin{proposition}\label{fonda} The map $T^0$ is a Markov map of $\interior(\II)$ associated to the measurable partition $\mathcal{N}^0$.
\end{proposition}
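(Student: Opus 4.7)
The plan is to check the three conditions of Definition \ref{markovdef} for $T^0$ acting on $\interior(\II)$ with the countable family $\mathcal{N}^0$. Conditions (ii) and (iii) will follow rather directly from the work already done in Sections \ref{induces} and \ref{distprop}; the core of the proof is condition (i), which amounts to the assertion that Lebesgue almost every point of $\II$ eventually returns to $\II$.

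For (ii), I would apply Proposition \ref{inducp} in the setup of Section \ref{frmi}: for each $I_m \in \mathcal{N}^0$ the iterate $f^{N^0(I_m)}$ is a homeomorphism from $I_m$ onto $\II$. One then observes that the critical points $c^\pm$ lie in $\boundary(\II)$ but not in any $\interior(I_m)$ (since $f$ folds at $c^\pm$, so any interval having $c^\pm$ in its interior fails to be mapped monotonically by $f^{N^0}$), and that the intermediate iterates $f^j(\interior(I_m))$ for $0 < j < N^0$ are disjoint from $\II$ by Proposition \ref{inducp}(1), hence avoid $c^\pm$. It follows that $T^0$ restricts to a $C^1$-diffeomorphism from $\interior(I_m)$ onto $\interior(\II)$. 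Condition (iii) on bounded distortion is precisely the conclusion of Proposition \ref{distorp3} applied to sequences of elements of $\mathcal{N}^0$, with the constant $D_0 = D_3$.

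The substantial content is condition (i): $\interior(\II) \setminus \bigcup_{I_m \in \mathcal{N}^0} I_m$ has zero Lebesgue measure. Up to a countable (hence null) set of endpoints, this set coincides with
\[
K = \{ x \in \interior(\II) : f^n(x) \notin \interior(\II) \text{ for all } n \ge 1 \}.
\]
The plan is to show $K$ has measure zero by enlarging it to
\[
\tilde K = \{ x \in \mathbb{T}^1 : f^n(x) \notin \interior(\II) \text{ for all } n \ge 1 \},
\]
which is closed (as $\interior(\II)$ is open) and forward-invariant under $f$, and then invoking Mañé's theorem \cite{mane} exactly as in the proof of Proposition \ref{frmap}. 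Two points must be verified: (a) $\tilde K$ contains no non-hyperbolic periodic orbit, which is immediate from hypothesis (A3); and (b) $\tilde K$ contains neither critical point. For (b), the forward orbit of $c^+$ under $f$ coincides with its orbit under the upper map $f_+$ (note that $f_+^n(c^+) \notin \II^+$ for $n \ge 1$ by the strict order (\ref{bonordre}) in Section \ref{bonordres} combined with the irrationality of $\rho^+$, so $f_+^n(c^+) = f^n(c^+)$ for all $n \ge 1$). Since $f_+$ is semi-conjugate to the irrational rotation $R_{\rho^+}$, this orbit is dense in $\mathbb{T}^1$ and in particular meets $\interior(\II)$, so $c^+ \notin \tilde K$; the symmetric argument with $f_-$ gives $c^- \notin \tilde K$. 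Mañé's theorem then yields that $\tilde K$ is uniformly hyperbolic for $f$, and the classical zero-measure result for hyperbolic sets of $C^2$ one-dimensional maps (\cite{demelovanstrien}, Chapter III, Theorem 2.6, already applied in Proposition \ref{frmap}) gives $\lambda(\tilde K) = 0$, hence $\lambda(K) = 0$.

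The main obstacle is condition (i), and inside it the verification that $\tilde K$ avoids the critical points — i.e., that the forward orbits of $c^\pm$ meet $\interior(\II)$. This is where hypothesis (A2) (irrationality of both $\rho^\pm$) and the semi-conjugacy structure of the upper and lower maps are used in an essential way; the hyperbolicity and zero-measure conclusions are then standard.
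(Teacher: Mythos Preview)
Your treatment of conditions (ii) and (iii) is fine, but the argument for condition (i) has a fatal error: the claim that $c^+\notin\tilde K$ is false. You assert that the $f_+$-orbit of $c^+$ is dense in $\mathbb{T}^1$ because $f_+$ is semi-conjugate to $R_{\rho^+}$, but a semi-conjugacy is not a conjugacy. The monotone factor map $h$ collapses the flat interval $\II^+$ (and all its preimages) to points; density of $\{R_{\rho^+}^n(h(c^+))\}$ does \emph{not} lift to density of $\{f_+^n(c^+)\}$. In fact, the very property you quote from Section~\ref{bonordres} --- that the iterates $f_+^n(\II^+)$, $n\in\mathbb{Z}$, are pairwise disjoint --- shows that $f_+^n(c^+)\notin\interior(\II^+)$ for all $n\geq 1$. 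Since $\II\subset\II^+$ and $f^n(c^+)=f_+^n(c^+)$, the forward $f$-orbit of $c^+$ never meets $\interior(\II)$, so $c^+\in\tilde K$ (and symmetrically $c^-\in\tilde K$). This is also recorded explicitly as condition (i) in Section~\ref{geneinduc} for the choice $I=\II$.

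Consequently Ma\~n\'e's theorem does not apply to $\tilde K$, and there is no shortcut here. This is precisely the difficulty the paper is designed to overcome: one first enlarges $\II$ to the interval $A\supset\II$ containing genuine neighborhoods of both critical points, so that the non-return set for $A$ \emph{does} avoid $c^\pm$ and Ma\~n\'e applies (Proposition~\ref{frmap}). The remaining work --- showing that Lebesgue a.e.\ point of the ``collars'' $A_L^+$ and $A_R^-$ eventually enters $\II$ --- cannot be done by hyperbolicity, since the dynamics there accumulates on the critical points; it requires the combinatorial decomposition machinery of Section~\ref{decompositions} (primary decomposition, refinements, regular decomposition) culminating in Proposition~\ref{propreg}, and then the inductive refinement leading to Proposition~\ref{fondap}. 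The paper's proof of Proposition~\ref{fonda} then simply cites Proposition~\ref{fondap}(\ref{fondap1}) for condition~(i) and Proposition~\ref{distorp3} for condition~(iii).
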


Let us remark that Proposition \ref{fonda} holds for any map in $\Bimod$, without need of conditions \ref{C-} and \ref{C+}. However:

\begin{proposition}\label{fondb} The map $N^0$ associated to the family $\mathcal{N}^0$ is summable if and only if both conditions \ref{C-} and \ref{C+} are satisfied.
\end{proposition}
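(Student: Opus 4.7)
The strategy is to establish the two-sided comparison
\begin{equation*}
\sum_{J \in \mathcal{N}^0} N^0(J)\,|J| \;\asymp\; 1 \;+\; \sum_{k\ge 1} q^+_{2k}\,\dd\bigl(f^{q^+_{2k-1}}(c^+),c^+\bigr) \;+\; \sum_{k\ge 0} q^-_{2k+1}\,\dd\bigl(f^{q^-_{2k}}(c^-),c^-\bigr),
\end{equation*}
from which the claim follows at once: summability of $N^0$ is equivalent to the convergence of both series on the right, which is exactly the conjunction of \ref{C-} and \ref{C+}.

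The plan is to decompose $\mathcal{N}^0 = \mathcal{N}^0_+ \sqcup \mathcal{N}^0_- \sqcup \mathcal{N}^0_{\mathrm{core}}$, where $\mathcal{N}^0_\pm$ consists of the return intervals of $\II$ adjacent to $c^\pm$ whose forward $f$-orbits shadow those of $c^\pm$ for a long time, while $\mathcal{N}^0_{\mathrm{core}}$ gathers the remaining intervals (those that return in bounded time or whose orbits stay at a definite distance from the critical orbits). Using the combinatorial description of closest returns of the upper map (Proposition \ref{rotationp}) and the nested preimages $I^+_{k,l}$ introduced in Section \ref{defm0}, I would parametrize $\mathcal{N}^0_+$ by pairs $(k,l)$ with $k$ sufficiently large and $0\le l<a^+_{2k+1}$: the corresponding return interval $J^+_{k,l}$ has return time approximately $q^+_{2k-1}+l q^+_{2k}$, and the Koebe principle (Proposition \ref{distorp1}) applied to the branch of $f^{N^0(J^+_{k,l})-1}$ defined on a controlled neighborhood of $f(c^+)$, together with the non-flatness of $c^+$, expresses $|J^+_{k,l}|$ in terms of $\dd\bigl(f^{q^+_{2k-1}+l q^+_{2k}}(c^+),c^+\bigr)$. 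Summing over $l$ and exploiting the telescoping of closest-return distances should recover a single summand of the form $q^+_{2k}\,\dd(f^{q^+_{2k-1}}(c^+),c^+)$, uniformly in $k$. A symmetric analysis near $c^-$ using the lower map produces the $c^-$-series.

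For the core contribution, the Markov property of $T^0$ (Proposition \ref{fonda}), the composed distortion bound (Proposition \ref{distorp3}) and the hyperbolicity of $T^0$ (Theorem \ref{maneinduce}) combine to give exponential decay of the total length of the elements of $\mathcal{N}^0_{\mathrm{core}}$ produced at successive generations of $T^0$, while the $f$-return time grows only linearly with the generation. A geometric summation then shows that $\sum_{J\in \mathcal{N}^0_{\mathrm{core}}} N^0(J)\,|J|$ is finite regardless of whether \ref{C-} and \ref{C+} hold, yielding the $O(1)$ term.

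The main obstacle will be the length estimate for $J^+_{k,l}$ described above: the non-flatness exponent $\ell^+$ intervenes whenever one crosses the critical point $c^+$, and it must combine with the Koebe-controlled derivative along the shadowing orbit to produce a quantity linear in the distance $\dd(\cdot,c^+)$, so as to match precisely the summand appearing in \ref{C+}. This cancellation should follow by identifying $J^+_{k,l}$ as bounded on one side by $c^+$ and on the other by a preimage of $c^+$ or $c^-$ prescribed by Proposition \ref{bbo}, so that the critical branch of $f$ and the hyperbolic branches along the shadowing orbit contribute complementary factors with the correct scaling.
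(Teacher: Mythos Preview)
Your overall architecture --- split the sum into near-critical and core contributions and seek a two-sided comparison --- matches the paper's, but the step ``summing over $l$ and exploiting the telescoping of closest-return distances should recover a single summand $q^+_{2k}\,\dd(f^{q^+_{2k-1}}(c^+),c^+)$'' hides the real difficulty. An Abel-type summation of $\sum_l (\text{time}_l)\cdot(\text{length}_l)$ leaves a stray factor of order $a^+_{2k+1}$, because the successive return distances do \emph{not} decay geometrically uniformly in $k$: the relevant return map $f^{q_{2k+1}^+}$ has zero derivative at $c^+$, so near the critical point the decay stalls. The paper resolves this with a two-regime argument (Lemma~\ref{in1}) --- a geometric sum where the derivative exceeds a fixed $\gamma>1$, and a tail near $c^+$ of total length controlled by $\dd(f^{q^+_{2k+1}}(c^+),c^+)$ --- and then invokes Graczyk's estimate (Remark~\ref{rkgraczyk}), which says that $|I^+_{k,0}|$ exceeds $\dd(I^+_{k,0},c^+)$ by a factor exponential in $a^+_{2k+1}$. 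This is exactly what absorbs the uncontrolled $a^+_{2k+1}$ and turns the bound into the summand appearing in \ref{C+}. The reverse implication uses Graczyk again for the same reason. Your sketch never mentions this input; without it you will at best obtain $q^+_{2k+1}\,\dd(f^{q^+_{2k-1}}(c^+),c^+)$, which is not condition \ref{C+}.

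Second, the trichotomy $\mathcal{N}^0_+\sqcup\mathcal{N}^0_-\sqcup\mathcal{N}^0_{\mathrm{core}}$ is too coarse: an element of $\mathcal{N}^0$ with large order may alternate many times between shadowing a critical orbit and a long hyperbolic excursion, so it does not belong cleanly to any one class. The paper does not split $\mathcal{N}^0$ directly; instead it builds a summable partition $\mathcal{R}$ of $\mathbb{T}^1\setminus\II$ from $\mathcal{M}^\pm$ and $\mathcal{S}$ (Section~\ref{pr}) and then refines inductively through a hierarchy $(\mathcal{Q}_k)$, peeling off one return to $A$, $A^+$ or $A^-$ at a time while tracking summability via~\eqref{raffin3}--\eqref{raffin4}. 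Your appeal to the hyperbolicity of $T^0$ (Theorem~\ref{maneinduce}) is the wrong tool here: that result controls iterates of $T^0$, whereas summability of $N^0$ concerns the \emph{first} return, before any iteration of $T^0$; the hyperbolicity that matters for the core is Ma\~n\'e's estimate away from $A$ (Proposition~\ref{frmap}). As a minor point, your parametrization of the near-$c^+$ intervals by forward-return times $q^+_{2k-1}+lq^+_{2k}$ (closest returns on the wrong side of $c^+$) is off by one parity; the paper's primary intervals carry the backward-return labels $q^+_{2k}+lq^+_{2k+1}$ (Section~\ref{defm0}), and even these are only the first layer of a multi-step refinement.
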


We recall that the definition of \emph{summable} was given in Section \ref{preliminaries} (Definition \ref{premdef}).

The proof of Proposition \ref{fonda} and Proposition \ref{fondb} will be divided into different subsections. All of them will contain in the beginning a short overview on its content.

\subsection{Decompositions}\label{decompositions} The goal of this subsection is to prove that the partition $\mathcal{M}^+$ (see Section \ref{feap}, before Remark \ref{feapr}, for its definition) gives a measurable partition of $A_L^+$ (see Proposition~\ref{propreg} below). For this aim we construct a primary decomposition of $A_L^+$ and successive refinements of it until to get a regular decomposition. Let us then start giving some basic definitions. 
\subsubsection{Basic definitions}
We define the regular intervals for the map $g=g_+$.

\begin{definition}\label{reg1} Let $J$ be an interval contained in $A_L^+$ with non-empty interior and $N\geq 1$ an integer such that
\begin{equation}\label{return}
\forall 0\leq n\leq N-1, \quad g_+^n(J)\cap A^+=\emptyset.
\end{equation}

\begin{itemize}
\item If $J$ is open, if the interval $(x_1,x_2)=g_+^N(J)$ is contained in $A_L^+$ and if it satisfies $x_2=c^+$ and $x_1=f^m_+(c^+)$ or $x_1=f^m_+(a^+)$ for some number $m\in\mathbb{N}$, then the interval $J$ is a \textbf{gap}.
\item If $J$ is closed and $g_+^N(J)=\II^+$, the interval $J$ is a \textbf{rough interval}.
\item If $J$ is closed and $g_+^N(J)=A^+$, the interval $J$ is a \textbf{regular interval} (for $g_+$).
\end{itemize}

From this definition, the integer $N$ is uniquely defined and will be called the \textbf{order} of $J$ and denoted by $N(J)$.
\end{definition}

In any case on $J$ the map $g_+^{N(J)}$ is an homeomorphism onto its image. Note also that if $J$ is a regular interval, its iterates $J, \cdots,f^N(J)$ are disjoint. Distortion on regular intervals and gaps can be controlled thanks to Proposition~\ref{distorp1}.

\subsubsection{} The following proposition is a direct consequence of Section~\ref{feap} and Remark~\ref{feapr}.

\begin{proposition}\label{propextend}
\begin{enumerate}
\item Let $J\subset A_L^+$ be a regular interval. Then there exists a unique compact interval $\hat J$ containing $J$ that is sent by $f^{N(J)}$ homeomorphically onto $\hat A^+$.
\item\label{propextend2} The map that associates to any rough interval $J$ the regular interval $J'\subset J$ defined by $g_+^{N(J)}(J')=A^+$ is a bijection between the set of rough interval and the set of regular intervals.
\end{enumerate}
\end{proposition}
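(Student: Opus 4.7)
The plan is to derive both parts directly from the inducing framework of Section~\ref{feap} together with Remark~\ref{feapr}. The key translation is that although regular and rough intervals are defined via $g_+$, their iterates stay clear of $A^+$ by definition of their order, so on these orbits $g_+$ can be replaced by $f$. This allows the general return-map statement Proposition~\ref{inducp}, applied with $I=A^+$ and $\hat I=\hat A^+$, to govern the extensions one needs.

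For part~(1), take a regular interval $J\subset A_L^+$ of order $N=N(J)$ and first note from Definition~\ref{reg1} that $g_+^n(J)\cap A^+=\emptyset$ for $0\leq n<N$ while $g_+^{N}(J)=A^+$. Since $g_+$ and $f$ coincide off $A^+$, this gives $f^n(J)=g_+^n(J)$ for $0\leq n\leq N$, and in particular $f^{N}(J)=A^+$. Fix any $x\in\interior(J)$; its first $f$-entry time into $A^+$ is then exactly $N$. Applying Proposition~\ref{inducp} at $x$ yields a compact interval $\hat J\ni x$ on which $f^{N}$ is a homeomorphism onto $\hat A^+$. Because the unique subinterval of $\hat J$ mapped by $f^{N}$ homeomorphically onto $A^+$ is the return interval to $A^+$ containing $x$, it must coincide with $J$. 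Hence $\hat J\supset J$ is the desired extension, and uniqueness is immediate from the fact that $f^{N}|_{\hat J}$ is monotone and $\hat A^+$ is fixed.

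For part~(2), the bijection is constructed in both directions and shown to be mutually inverse. Going forward, for a rough interval $J$ of order $N$ the restriction $g_+^{N}|_J\colon J\to\II^+$ is a homeomorphism and $A^+\subset\II^+$, so $J':=(g_+^{N}|_J)^{-1}(A^+)$ is a closed subinterval of $J$ with $g_+^{N}(J')=A^+$; since $J'\subset J$, the iterates of $J'$ inherit from those of $J$ the property of avoiding $A^+$ for $0\leq n<N$, so $J'$ is regular of order $N$. Going backward, given a regular interval $J'$ of order $N$ with extension $\hat J$ from part~(1), set $J:=(f^{N}|_{\hat J})^{-1}(\II^+)$. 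This is a closed subinterval of $\hat J$ containing $J'$, and Remark~\ref{feapr} (in the role played there by the return interval to $A^+$) guarantees that $f^n(J)\cap A^+=\emptyset$ for $0\leq n<N$; combined with $f=g_+$ on such iterates this gives $g_+^{N}(J)=\II^+$, so $J$ is a rough interval of order $N$. By construction the assignments $J\mapsto J'$ and $J'\mapsto J$ are mutually inverse.

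The only genuinely nontrivial point, rather than purely formal bookkeeping, is to verify that the rough interval produced from a regular one actually lies in $A_L^+$ (as Definition~\ref{reg1} requires); this follows from the ordering of the orbit of $c^+$ encoded in~(\ref{bonordre}), which forces $\hat J$ to sit in a small neighborhood of $J\subset A_L^+$. Everything else is a direct application of Proposition~\ref{inducp} and Remark~\ref{feapr}.
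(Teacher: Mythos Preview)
Your proof is correct and follows exactly the approach the paper indicates: the paper gives no detailed argument and simply states that the proposition is ``a direct consequence of Section~\ref{feap} and Remark~\ref{feapr},'' which is precisely what you unpack via Proposition~\ref{inducp} for part~(1) and Remark~\ref{feapr} for part~(2).

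One small remark on your final paragraph: the appeal to~(\ref{bonordre}) is unnecessary. Once you have invoked Remark~\ref{feapr} to obtain $f^n(J)\cap A^+=\emptyset$ for $0\le n<N$, the case $n=0$ already gives $J\cap A^+=\emptyset$; since $f^N$ is increasing on $\hat J$ and maps both the left endpoint of $J$ and the left endpoint of $J'$ to $c^+$, these two left endpoints coincide and lie in $A_L^+$, so $J\subset A_L^+$ follows directly.
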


\subsubsection{} Gaps are very small with respect to regular intervals.

\begin{lemma}\label{ptes} There exists $\eta\in (0,1)$ such that for any gap $T\subset A^+_L$ and any regular interval $J$ that are adjacent with $T<J$, then,
$|T|<\eta |J|$.

The constant $\eta$ can be chosen arbitrarily small if $M_0$ is large enough, see subsection \ref{defm0}.
\end{lemma}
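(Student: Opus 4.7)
The plan is to show that the gap $T$ and the regular interval $J$ are sent by the same iterate $f^N$ to two intervals sitting on opposite sides of $c^+$ inside $\hat A^+$, that bounded distortion transports the length ratio $|T|/|J|$ to the corresponding image ratio, and that this image ratio is dominated by $|A^+_L|/|A^+|$, which tends to $0$ as $M_0\to\infty$.

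First I would prove $N(T)=N(J)$. Let $p$ denote the common boundary point. Since $g_+^{N(T)}(T)=(x_1,c^+)$ ends at $c^+$, continuity and orientation of $f^{N(T)}$ at $p$ force $f^{N(T)}(p)=c^+$; similarly $g_+^{N(J)}(J)=[c^+,b^+]$ forces $f^{N(J)}(p)=c^+$. If $N(T)\neq N(J)$, the point $c^+$ would be periodic for $f$, which is incompatible with the irrationality of $\rho^+$ in (A2) (equivalently with (A3), since $\D f(c^+)=0$). Set $N:=N(T)=N(J)$. By Proposition~\ref{propextend} there is a unique compact extension $\hat J\supset J$ on which $f^N$ is a homeomorphism onto $\hat A^+$. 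Since $f^N$ is injective on $T\cup J$ with image $(x_1,b^+]\subset\hat A^+$, and both $T$ and the left component of $\hat J\setminus J$ are intervals abutting $p$ from the left whose $f^N$-images sit in $A^+_L$, they must coincide up to boundary; in particular $T\subset\hat J$.

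Next I would estimate $|A^+_L|/|A^+|$. Using $a^+=f^{q_{2M_0-1}}(b^+)$, $b^+\in\Psi(I_{M_0,0}^+)$, $f\circ\Psi=f$ and $f(d^+)=f(c^+)$, one obtains $a^+$ close to $f^{q_{2M_0-1}}(c^+)$, which tends to $c^+$ by the closest-return property of Proposition~\ref{bbo}; meanwhile $b^+\to d^+$, since $\Psi$ extends continuously to $c^+$ with $\Psi(c^+)=d^+$ and $I_{M_0,0}^+$ shrinks to $\{c^+\}$. Hence $|A^+_L|\to 0$ while $|A^+|\to|\II^+|>0$ as $M_0\to\infty$. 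Fixing the constant $\varepsilon>0$ from Proposition~\ref{distorp1} and choosing $M_0$ so large that $|A^+_L|\leq\varepsilon|A^+|$, the inclusion $f^N(T)\subset A^+_L$ shows $f^N(T)$ lies in the left $\varepsilon$-cushion of $A^+$ inside $\hat A^+$; therefore $T\subset\hat{\hat J}$.

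Finally, Proposition~\ref{distorp1} furnishes a uniform distortion bound $K$ for $f^N$ on $\hat{\hat J}$, giving $|T|/|J|\leq K\cdot|f^N(T)|/|f^N(J)|\leq K\cdot|A^+_L|/|A^+|$. Taking $M_0$ still larger drives the right-hand side below any prescribed $\eta\in(0,1)$, proving the lemma. The main obstacle I anticipate is the geometric estimate $|A^+_L|/|A^+|\to 0$: although morally clear from the closest-return combinatorics, it requires a careful unpacking of the definitions of $a^+$, $b^+$ and $\Psi$ from Section~\ref{extenint} together with Lipschitz control of $\Psi$ on the relevant scales; by comparison the identity $N(T)=N(J)$ and the inclusion $T\subset\hat J$ are routine consequences of (A2)--(A3) and Proposition~\ref{propextend}.
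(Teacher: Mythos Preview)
Your proof is correct and follows essentially the same approach as the paper's: push $T\cup J$ forward by $f^{N}$ with $N=N(J)$, use the bounded distortion of Proposition~\ref{distorp1} on the pair, and observe that the image ratio is at most $|A_L^+|/|A^+|$, which can be made arbitrarily small by enlarging $M_0$. The paper's version is terser---it simply asserts that the distortion of $f^{N(J)}$ on $T\cup J$ is bounded by $D_1$ and writes $\frac{|T|}{|J|}<(1+D_1)\frac{|A_L^+|}{|A^+|}$---whereas you supply the verifications it leaves implicit: the equality $N(T)=N(J)$ (which the paper records separately in Remark~\ref{decr}.\ref{decr2}), the inclusion $T\subset\hat{\hat J}$ needed to invoke Proposition~\ref{distorp1}, and the asymptotic $|A_L^+|\to 0$.
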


\begin{proof} To prove this lemma, we first remark that on $T\cup J$ the distortion of $f^{N(J)}$ is bounded by $D_1$ (Proposition~\ref{distorp1}). Consequently,$$\frac{|T|}{|J|}<(1+D_1)\frac{|A_L^+|}{|A^+|}=\eta.$$

One chooses $\eta$ small by taking $|A_L^+|$ small.
\end{proof}

\subsubsection{} We are now able to give the definition of {decompositions}:

\begin{definition}\label{dec}

\renewcommand{\theenumi}{\roman{enumi}}

A \textbf{decomposition} of $A_L^+$ is a partition $\mathcal{P}$ of $\interior(A_L^+)$ in intervals such that:
\begin{enumerate}
\item any interval $J\in\mathcal{P}$ is either a gap, a rough or a regular interval of $A_L^+$;
\item outside any neighbourhood of $c^+$, the partition $\mathcal{P}$ is finite.
\item \label{dec3} for any $J,J'\in\mathcal{P}$, $J<J'$ (for the order on $A_L^+$) implies $N(J)\leq N(J')$.
\end{enumerate}

A decomposition is \textbf{regular} if it does not contain any rough interval. A decomposition is \textbf{summable} if the sum $\sum_{J\in \mathcal{P}}N(J)|J|$ is finite.
\end{definition}

\begin{remarks}\label{decr}

\begin{enumerate}
\item From the definition we get that for any partition $\mathcal{P}$ there is an open interval in $\mathcal{P}$ whose boundary contains $a^+$, the left endpoint of $A_L^+$. Any interval $J$ in $\mathcal{P}$ is adjacent to an other interval $J'\in\mathcal{P}$ with $J<J'$. If $\boundary(J)$ does not contain $a^+$, $J$ is also adjacent to an interval $J''\in\mathcal{P}$ with $J''<J$. Moreover $J'$ (resp. $J''$) is open if and only if $J$ is closed.
\item\label{decr2} If $J$ is a gap then $N(J)=N(J')$.
\end{enumerate}
\end{remarks}

We want to prove that $\mathcal{M}^+$ gives a measurable partition of $A_L^+$, that is, we want to prove that $\lambda\big(A_L^+\setminus\cup_{J\in\mathcal{M}^+}J\big)=0$ (see Proposition \ref{propreg} below). With this purpose, we proceed in $3$ steps:

\begin{description}
\item[Step 1] We define a first decomposition $\mathcal{P}_0$ of $A_L^+$, called the primary decomposition.
\item[Step 2] For each decomposition $\mathcal{P}$, we construct a refined decomposition $\mathcal{P'}$.
\item[Step 3] By successive refinements of $\mathcal{P}_0$ we get a regular decomposition $\mathcal{P^+}$, and prove that $\mathcal{M}^+$ gives a measurable partition of $A_L^+$.
\end{description}

The same arguments apply for the partition $\mathcal{M}^-$.

\subsubsection{Step 1. The primary decomposition}\label{pint} We will build a partition of $\interior(A_L^+)$ that does not contain any regular interval. We have introduced in~\ref{defm0} the compact intervals $I_{k,l}^+$ for $k\geq M_0$ and $0\leq l< a_{2k+2}$. They are contained in $A^+_L$ and from the description of Section~\ref{rotation} about dynamics of rotations, we get by definition that they are rough intervals.

\begin{lemma} The connected components of $A_L^+\setminus\bigcup I_{k,l}^+$ are gaps of $A_L^+$.
\end{lemma}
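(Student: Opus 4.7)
The plan is to enumerate the connected components of $A_L^+\setminus\bigcup I_{k,l}^+$ and exhibit, for each one, a time $N$ realising Definition~\ref{reg1}. Via the semiconjugacy $h$ of $f_+$ with the rotation $R_+$ of angle $\rho^+$ (which collapses $\II^+$ to a single point $p_0=h(c^+)$), each closed interval $I_{k,l}^+$ is mapped by $h$ to the point $p_0-\delta_{k,l}$ with $\delta_{k,l}=(q_{2k}\rho^+-p_{2k})+l(q_{2k+1}\rho^+-p_{2k+1})>0$. Applying Proposition~\ref{rotationp} to $R_+^{-1}$ shows that these points approach $p_0$ from the left monotonically along the lexicographic sequence $(M_0,0),(M_0,1),\dots,(M_0,a_{2M_0+2}-1),(M_0+1,0),\dots$, and so the $I_{k,l}^+$ themselves lie in $A_L^+$ in this same order, accumulating at $c^+$. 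Accordingly, the components of $A_L^+\setminus\bigcup I_{k,l}^+$ within $\interior(A_L^+)$ are an initial open interval $J_{\mathrm{init}}$ bounded by $a^+$ and $I_{M_0,0}^+$, together with an open interval between each pair of consecutive $I$'s in the ordering.

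For a consecutive pair $I_{k,l}^+<I_{k',l'}^+$—with $(k',l')=(k,l+1)$ if $l+1<a_{2k+2}$, otherwise $(k',l')=(k+1,0)$—take $N$ to be the pullback time of the right member, so that $N=q_{2k}+(l+1)q_{2k+1}$ or $N=q_{2k+2}$; in both cases $N-N_L=q_{2k+1}$, where $N_L$ is the pullback time of the left member. Under $f_+^N$, the right member is sent homeomorphically onto $\II^+$ with its left endpoint reaching $c^+$, whereas the left member is collapsed to the single point $f_+^{q_{2k+1}}(\II^+)=\{f_+^{q_{2k+1}}(c^+)\}$, because $f_+$ is constant on $\II^+$. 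Hence $f_+^N(J)=(f_+^{q_{2k+1}}(c^+),c^+)$, and the closest-return combinatorics place $f_+^{q_{2k+1}}(c^+)$ in $A_L^+$ close to $c^+$ from the left; this gives the required form with $x_1=f_+^{q_{2k+1}}(c^+)$. For $J_{\mathrm{init}}$, take $N=q_{2M_0}$: then $f_+^N$ sends $I_{M_0,0}^+$ onto $\II^+$ (its left endpoint reaching $c^+$) and fixes $a^+$ by Lemma~\ref{def-a}, giving $f_+^N(J_{\mathrm{init}})=(a^+,c^+)=\interior(A_L^+)$ with $x_1=a^+=f_+^0(a^+)$.

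It remains to check the avoidance condition $g_+^n(J)\cap A^+=\emptyset$ for $0\leq n<N$, which ensures $g_+^n=f_+^n$ on $J$ throughout and so $g_+^N(J)=f_+^N(J)$. Up to time $N_L$, $f_+^n(J)$ stays in $\T^1\setminus\II^+$ by the closest-return property of $I_{k,l}^+$ extended to a neighbourhood containing $J$; a fortiori it is disjoint from $A^+\subset\II^+$. For the transitional iterates $N_L\leq n<N$, $h$ sends $f_+^n(J)$ to the rotation arc $(p_0+(n-N_L)\rho^+,\,p_0+(n-N_L)\rho^++\beta)$ with $\beta=p_{2k+1}-q_{2k+1}\rho^+>0$; this arc contains $p_0$ precisely when $(n-N_L)\rho^+\bmod 1$ lies in $(-\beta,0)$. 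By Corollary~\ref{bound-q-n} together with the identity $\delta_{k,0}=a_{2k+2}\beta+(q_{2k+2}\rho^+-p_{2k+2})>\beta$, this fails throughout $0<n-N_L<q_{2k+1}$ (the nearest candidate at $n-N_L=q_{2k-1}$ has offset $\beta_{k-1}>\beta_k=\beta$). Consequently $f_+^n(J)\cap\II^+=\emptyset$ on this range, and in particular $\cap A^+=\emptyset$. The hard part will be this continued-fraction orbit-avoidance estimate at the transitional times; once it is settled, each connected component is verified to be a gap.
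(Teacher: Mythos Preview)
Your treatment of the non-initial gaps is essentially correct, though more laborious than necessary: the paper obtains the full avoidance $f_+^s(T)\cap\II^+=\emptyset$ for $0\le s<N$ in one stroke from Proposition~\ref{rotationp} (which already says no $f_+^{-s}(\II^+)$ with $0<s<N$ meets the gap), instead of splitting into the ranges $[0,N_L)$ and $[N_L,N)$ and reproving the continued-fraction estimate by hand.

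There is, however, a genuine error in your handling of the initial gap $J_{\mathrm{init}}$. You assert that $f_+^{q_{2M_0}}$ fixes $a^+$, citing Lemma~\ref{def-a}; but that lemma gives $f^{q_{2M_0}}(a^+)=a^+$, not $f_+^{q_{2M_0}}(a^+)=a^+$. The $f$-orbit of $a^+$ passes through $b^+\in\II^+$ at time $q_{2M_0}-q_{2M_0-1}$, and since $f_+$ is constant on $\II^+$, the $f_+$-orbit diverges from the $f$-orbit at that moment and thereafter follows the forward orbit of $c^+$, never returning to $a^+$. Relatedly, your general claim that ``$g_+^n(J)\cap A^+=\emptyset$ ensures $g_+^n=f_+^n$ on $J$'' is false: avoidance of $A^+$ only yields $g_+^n=f^n$ (because $g_+=f$ off $A^+$), whereas $g_+=f_+$ holds only off $\II^+\supsetneq A^+$. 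For the non-initial gaps this is harmless since you actually show the orbit avoids all of $\II^+$; for $J_{\mathrm{init}}$ the orbit \emph{does} enter $\II^+$, and the argument collapses. You also never verify the avoidance condition for $J_{\mathrm{init}}$ at all---your last paragraph treats only the two-interval case with an $N_L$.

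The paper's fix is exactly this missing observation: at time $s=q_{2M_0}-q_{2M_0-1}$ the interval $f^s(J_{\mathrm{init}})$ has (excluded) left endpoint $b^+$, hence lies in $A_R^+=(b^+,d^+]$ and so misses $A^+=[c^+,b^+]$; for every other $0\le s<q_{2M_0}$ one already has $f^s(J_{\mathrm{init}})\cap\II^+=\emptyset$. Consequently $g_+^s=f^s$ on $J_{\mathrm{init}}$ throughout, and the $f$-periodicity of $a^+$ then gives $g_+^{q_{2M_0}}(J_{\mathrm{init}})=f^{q_{2M_0}}(J_{\mathrm{init}})=(a^+,c^+)=\interior(A_L^+)$.
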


\begin{proof} Let $T$ be such a connected component and $I_{k,l}^+$ the adjacent interval to its right ($T<I_{k,l}^+$). We note $n=q_{2k}+lq_{2k+1}$ the order of $I_{k,l}^+$. Let us first assume that $\boundary(T)$ does not contain $a^+$. By Proposition~\ref{rotationp}, $T$ does not meet any interval of the form $f_+^{-s}(\II^+)$, with $0\leq s\leq n$. Hence for any $0\leq s\leq n$, $f^s(T)$ does not meet $A^+$ and $f^n(T)$ is adjacent to $A^+$. Note that $f^n(T)$ is equal to $[f^{q_{2k+1}}_+(c^+),c^+)$ (if $l>0$) or to $[f_+^{q_{2k-1}}(c^+),c^+)$ (if $l=0$). 
As $a^+<f_+^{q_{2M_0-1}}(c^+)$ in $A_L^+$, in both situations $f^n(T)\subset A_L^+$ and $T$ is a gap. If $a^+$ belongs to $\boundary(T)$, then $n=q_{2M_0}$ and $T$ intersects $f_+^{-q_{2M_0}+q_{2M_0-1}}(\II^+)$. The only time $0\leq s\leq n$ that $f^s(T)$ intersects $\II^+$ occurs at $s=q_{2M_0}-q_{2M_0-1}$. Note that the left endpoint of $f^{s}(T)$ is $b^+$ so that $f^{s}(T)$ does not intersects $A^+$. Moreover, since $a^+$ is $q_{2M_0}$-periodic, $f^n(T)=\interior(A^+_L)$ and $T$ is a gap.
\end{proof}

The following proposition defines the \textbf{primary decomposition} $\mathcal{P}_0$.

\begin{proposition} The partition $\mathcal{P}_0$ of $A_L^+$ in intervals $I_{k,l}^+$ and connected components of $A_L^+\setminus \bigcup I_{k,l}^+$ is a decomposition.
\end{proposition}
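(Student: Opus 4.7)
The plan is to check the three items of Definition~\ref{dec} one by one, using what has already been set up in Section~\ref{pint}. The first item is essentially for free: the intervals $I_{k,l}^+$ were already observed to be rough intervals (from their definition as preimages of $\II^+$ together with Proposition~\ref{rotationp}, which guarantees that the intermediate iterates stay outside $A^+$), and the preceding lemma has just shown that every connected component of $A_L^+\setminus\bigcup I_{k,l}^+$ is a gap. So every element of $\mathcal{P}_0$ is a gap, rough, or regular interval (in fact no regular intervals appear, which will be used later).

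For the local finiteness condition (ii), I would argue that the intervals $I_{k,l}^+=f_+^{-q_{2k}-lq_{2k+1}}(\II^+)$ accumulate only on $c^+$: by the rotation combinatorics of Section~\ref{rotation} the corresponding preimages $f_+^{-q_{2k}-lq_{2k+1}}(c^+)$ tend to $c^+$ as $k\to\infty$, uniformly in $l$. Thus outside any fixed neighbourhood of $c^+$ only finitely many $I_{k,l}^+$ survive, and consequently only finitely many gaps separate them.

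The main point is the ordering condition (iii). Here I would invoke Proposition~\ref{rotationp} (and the ordering~(\ref{bonordre}) from Section~\ref{bonordres}) to describe how the $I_{k,l}^+$ sit inside $A_L^+$: for fixed $k$, the inequalities proved in that proposition give
\[
f_+^{-q_{2k}}(c^+)<f_+^{-q_{2k}-q_{2k+1}}(c^+)<\cdots<f_+^{-q_{2k+2}}(c^+)=f_+^{-q_{2(k+1)}}(c^+)<c^+,
\]
so the intervals $I_{k,0}^+,I_{k,1}^+,\dots,I_{k,a_{2k+2}-1}^+$ are ordered from left to right in $A_L^+$, and they are followed immediately (after one more gap) by the block at level $k+1$. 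Their orders are $q_{2k}+lq_{2k+1}$, which strictly increase both in $l$ and when passing from $(k,a_{2k+2}-1)$ to $(k+1,0)$ since $q_{2(k+1)}=q_{2k}+a_{2k+2}q_{2k+1}$. So the rough intervals are listed from left to right with strictly increasing orders. For any gap $T$, the proof of the preceding lemma computed $N(T)$ explicitly as the order of the rough interval immediately to its right; this is exactly the content of Remark~\ref{decr}\,(\ref{decr2}), and it implies that inserting the gaps between consecutive rough intervals preserves monotonicity of $N$ along $A_L^+$.

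The only mild subtlety — and the step I would pay the most attention to — is the edge case at the left endpoint $a^+$: one has to check that the leftmost element of $\mathcal{P}_0$ (the component containing $a^+$ in its boundary) is indeed a gap of order $q_{2M_0}$ and that the rough interval immediately to its right is $I_{M_0,0}^+$, so that monotonicity is not broken at the boundary. This is precisely what the final paragraph of the proof of the preceding lemma established via the $q_{2M_0}$-periodicity of $a^+$, so no further work is needed. Combining these three verifications yields that $\mathcal{P}_0$ is a decomposition of $A_L^+$.
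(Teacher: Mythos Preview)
Your proposal is correct and follows essentially the same route as the paper's proof, just with more detail filled in. The paper's proof is extremely terse: it declares that items (i) and (ii) are already in place, asserts that $I_{k,l}^+<I_{k',l'}^+$ implies $N(I_{k,l}^+)\le N(I_{k',l'}^+)$ ``by construction'', and then cites Remark~\ref{decr}\,(\ref{decr2}) to handle the gaps. Your version makes explicit the rotation-combinatorics argument behind the monotone ordering of the rough intervals and, for the gaps, appeals directly to the computation in the proof of the preceding lemma (where $N(T)$ is shown to equal the order of the rough interval on its right) rather than to Remark~\ref{decr}\,(\ref{decr2}); this is a minor improvement in presentation since that Remark is phrased as a property of decompositions and invoking it here could read as slightly circular.
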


\begin{proof} It remains to show that the order is monotone: by construction it is obvious that in $A_L^+$, if $I_{k,l}^+<I_{k',l'}^+$ then $N(I_{k,l}^+)\leq N(I_{k',l'}^+)$. 
By Remark~\ref{decr}.\ref{decr2}, property~\ref{dec3}) of Definition~\ref{dec} is satisfied.
\end{proof}

Recall that we have introduced an homeomorphism $\Psi$ at Section~\ref{defpsi} between some left neighbourhoods of $c^+$ and $d^+$. We define the \textbf{primary intervals} of $\II^+$ as image $I'_{k,l}$ by $\Psi$ of intervals $I_{k,l}^+$ with order larger or equal to $ q_{2M_0}$ or as connected components of $A_R^+\setminus \cup I'_{k,l}$. Note that $A^+$ and the primary intervals of $\II^+$ define a partition of $\II^+\setminus\{d^+\}$.

\subsubsection{Step 2. Decomposition's refinement.} Let $\mathcal{P}$ be a decomposition. Our aim here is to give a construction that associates to $\mathcal{P}$ a ``finer decomposition", $\mathcal{P}'$. Some intervals of $\mathcal{P}$ remain unchanged:
\begin{itemize}
\item the (unique) gap $T\in\mathcal{P}$ whose boundary contains $a^+$ remains in $\mathcal{P}'$;
\item let $J$ and $T$ be two adjacent intervals of $\mathcal{P}$ with $J<T$ and assume that $J$ is a regular interval and $T$ a gap. Then $J$ and $T$ remain in $\mathcal{P}'$.
\end{itemize}
In the following we consider two adjacent intervals $J$ and $T$ of $\mathcal{P}$ with $J<T$ and assume that $J$ is a rough interval and $T$ a gap. We will explain how to decompose $J\cup T$ in intervals that will belong to $\mathcal{P'}$.

\subsubsection{}\label{co1} We introduce some rough interval $J'$ adjacent to $T$ with $T<J'$: let $\bar J$ be the interval in $\mathcal{P}$ that is adjacent to $T$ with $T<\bar J$. Either $\bar J$ is a rough interval and $J'=\bar J$ or $\bar J$ is a regular interval and $J'$ is the rough interval associated to $\bar J$. Recall that by definition of decompositions, $N(J')=N(T)>N(J)$.

\begin{lemma} The interval $g_+^{N(J)}(J')$ has the following form:$$g_+^{N(J)}(J')=f_+^{N(J)-N(J')}(\II^+).$$

Moreover either $N(J')-N(J)=q_{2M_0-1}$ or there exists some integers $k\geq M_0$ and $0< l\leq a_{2k+1}$ satisfying $N(J')-N(J)=q_{2k-1}+lq_{2k}$.
\end{lemma}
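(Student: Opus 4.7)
My plan is to split the lemma into its two assertions and attack them in order. For the first assertion, the key point is that on a rough interval $J'$, the orbit under $g_+$ coincides with its orbit under $f_+$ for all times $0,\ldots,N(J')$. This is where one must be a bit careful: $g_+$ and $f_+$ disagree on $A_R^+=(b^+,d^+]\subset\mathbb{I}^+$, so a priori one only knows that the orbit of $J'$ by $g_+$ avoids $A^+$, not the whole plateau $\mathbb{I}^+$. I would verify the coincidence inductively along the refinement procedure that produces $J'$: at the primary level, $J'=I_{k,l}^+=f_+^{-(q_{2k}+lq_{2k+1})}(\mathbb{I}^+)$ by construction, and the closest-return property of $R_{\rho^+}$ (Proposition~\ref{rotationp}) together with the semi-conjugacy of $f_+$ to this rotation shows that $f_+^n(J')\cap\mathbb{I}^+=\emptyset$ for all $0\le n<N(J')$; in particular, these iterates never meet $A_R^+$, so $g_+^n=f_+^n$ on $J'$ throughout. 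At later refinement stages, each new rough interval is a component of a pull-back by $f_+$ of an earlier rough interval whose $f_+$-orbit already has this property, so the coincidence is preserved.

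Once this is established, from $g_+^{N(J')}(J')=\mathbb{I}^+$ one gets $f_+^{N(J')}(J')=\mathbb{I}^+$, hence $f_+^{N(J)}(J')=g_+^{N(J)}(J')$ is contained in $f_+^{-(N(J')-N(J))}(\mathbb{I}^+)$. Equality (not just containment) is obtained by using the extension $\hat J'$ from Proposition~\ref{propextend}: $f_+^{N(J)}$ is a homeomorphism on $\hat J'$, and the subinterval of $\hat J'$ mapped into the plateau $\mathbb{I}^+$ is precisely $J'$. So $f_+^{N(J)}(J')$ fills the component of $f_+^{-m}(\mathbb{I}^+)$ adjacent to the image of the gap $T$, which is the displayed interval.

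For the second assertion, I would use the location of $g_+^{N(J)}(J')$: since $J<T<J'$ in $A_L^+$ and $g_+^{N(J)}$ is orientation-preserving on $J\cup T\cup J'$ (no iterate hits $A^+$ before the last step), the image $g_+^{N(J)}(J')$ sits just to the left of $c^+$ in $A_L^+$, separated from $\mathbb{I}^+$ only by the gap image. An interval of the form $f_+^{-m}(\mathbb{I}^+)$ approaching $c^+$ from the left is, by the semi-conjugacy to $R_{\rho^+}$ and Proposition~\ref{rotationp}, a left closest-return interval at $c^+$, so $m$ must belong to the list~\eqref{gauche}: $m=q_{2k-1}+lq_{2k}$ with $k\ge 0$, $0<l\le a_{2k+1}$. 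The refinement procedure starts at level $M_0$, so $k\ge M_0$ except for the boundary case $m=q_{2M_0-1}$, which corresponds to the situation in which $J'$ is the neighbour of the leftmost gap of $\mathcal{P}_0$ (the one containing $a^+$), for which $N(J')=q_{2M_0}+q_{2M_0-1}$ and $N(J)=q_{2M_0}$ by Lemma~\ref{def-a}.

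The main obstacle is the first part: one has to be honest about why $g_+$-orbits of rough intervals never wander into the region $A_R^+$ where $g_+$ differs from $f_+$. This is not literally part of the definition of a rough interval, and must be extracted from the inductive construction of the decompositions together with the rotation combinatorics. Once this is cleanly justified, the identification of the preimage and the combinatorial classification of $m$ are direct consequences of Proposition~\ref{rotationp}.
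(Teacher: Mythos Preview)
Your inductive strategy for the first assertion contains a genuine error. The claim that on every rough interval the $g_+$-orbit coincides with the $f_+$-orbit for all times $0,\ldots,N(J')$ is \emph{false} beyond the primary level. When one refines a pair $(J,T)$, each new rough interval inside $J$ has the form $J''=(g_+^{N(J)})^{-1}\bigl(\Psi(I_{k',l'}^+)\bigr)$; at time $N(J)$ its $g_+$-orbit lands in $\Psi(I_{k',l'}^+)\subset A_R^+\subset\II^+$, where $f_+$ collapses everything to the single point $f(c^+)$ while $g_+=f$ does not. Hence $g_+^{N(J)+1}(J'')\neq f_+^{N(J)+1}(J'')$ and the induction breaks. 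The honest statement is only that $g_+^n=f^n$ on rough intervals (their $g_+$-orbits avoid $A^+$), not that $g_+^n=f_+^n$; you have correctly identified this as the main obstacle, but the proposed fix does not work.

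The paper avoids the issue by working directly with the triple $J<T<J'$ and looking only at the orbit segment from time $N(J)$ to $N(J')$. Setting $I=T\cup J'$, the left endpoint of $f^{N(J)}(I)$ is $d^+$ (since $J$ is rough with $g_+^{N(J)}(J)=\II^+$ and $T$ is right-adjacent to $J$). The forward $f$-orbit of $d^+$ coincides with that of $c^+$ and never meets $\II^+$; combined with $f^n(I)\cap A^+=\emptyset$ (so $c^+\notin f^n(I)$), this forces $f^n(I)\cap\II^+=\emptyset$ for $N(J)<n<N(J')$. Thus $f=f_+$ along this segment, and $f^{N(J')}(J')=\II^+$ yields $g_+^{N(J)}(J')=f^{N(J)}(J')=f_+^{-(N(J')-N(J))}(\II^+)$.

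Your geometric picture for the second assertion is also inverted: since $g_+^{N(J)}$ is increasing on $J\cup T\cup J'$ and sends $J$ onto $\II^+$, the images $g_+^{N(J)}(T)$ and $g_+^{N(J)}(J')$ lie to the \emph{right} of $d^+$, not in $A_L^+$. The paper uses that $f^{N(J)}(T)$ is right-adjacent to $d^+$ and meets no $f_+^{-n}(\II^+)$ with $0\le n<N(J')-N(J)$; Proposition~\ref{rotationp} (applied on the right side of $\II^+$) then forces $N(J')-N(J)$ into the list~\eqref{gauche}. The restriction $k\ge M_0$ (or the exceptional value $q_{2M_0-1}$) is not a statement about a particular primary pair, but comes from the gap condition $f^{N(J')}(T)\subset A_L^+$: this gives $a^+\le f^{N(J')-N(J)}(d^+)$, and comparing with the location of $a^+$ inside $f_+^{-q_{2M_0}+q_{2M_0-1}}(\II^+)$ rules out smaller values.
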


\begin{proof} Let us consider the interval $I=T\cup J'$. By assumption the intervals $f^n(I)$ for $0\leq n< N(J')$ do not meet $A^+$. Thus the restriction of the maps $f^n$ for $0\leq n\leq N(J')$ on $I$ are increasing homeomorphisms.

The left endpoint of $f^{N(J)}(I)$ (which also belongs to $f^{N(J)}(J')$) is $d^+$. Its iterates $f^{n}(d^+)$ for $n\geq 1$ never meet $\II^+$. This shows that the intervals $f^{n}(I)$ for $N(J)\leq n< N(J')$ do not meet $\II^+$. As $f^{N(J')}(J')=\II^+$, one gets in particular the first part of the proposition.

It implies also that $f_+^{-n}(\II^+)$ does not meet $f^{N(J)}(T)$ for $0\leq n< N(J')-N(J)$.
Since $f^{N(J)}(T)$ is adjacent to $f_+^{N(J)-N(J')}(\II_+)$,
by Proposition~\ref{rotationp}, there exist some integers $k\geq 0$ and $0< l\leq a_{2k+1}$ such that$$N(J')-N(J)=q_{2k-1}+lq_{2k}.$$

As $T$ is a gap, we get $f^{N(J')}(T)\subset A_L^+$ and in $A_L^+$,$$a^+\leq f^{N(J')-N(J)}(d^+)=f^{q_{2k-1}-lq_{2k}}(c^+).$$

Note also that since $a^+\in f_+^{-q_{2M_0}+q_{2M_0-1}}(\II^+)$ (see Section~\ref{defar}),
$$f^{q_{2M_0-1}-q_{2M_0-2}}(c^+)<a^+<f^{q_{2M_0-1}}(c^+).$$

Consequently the smallest possible value for $N(J')-N(J)$ is $q_{2M_0-1}$. In the other cases $k\geq M_0$.
\end{proof}

\subsubsection{}\label{co2} The map $g_+^{N(J)}$ induces an homeomorphism $h$ from $J\cup T$ onto $\II^+\cup g_+^{N(J)}(T)$. In the case $N(J')-N(J)= q_{2M_0-1}$, we take the following partition of $\II^+\cup g_+^{N(J)}(T)$:$$\{A^+, (\II^+\cup g_+^{N(J)}(T))\setminus A^+\}.$$

Otherwise, $N(J')-N(J)=q_{2k-1}+lq_{2k}$, $k\geq 2M_0$ $0<l\leq a_{2k+1}$. Then, we consider the partition with the following intervals:
\begin{itemize}
\item the interval $A^+$,
\item the primary intervals of $\II^+$ with order less or equal to $q_{2k}$ (recall Section~\ref{pint} for the definition of primary intervals of $\II^+$),
\item the interval $A_R^+(k)\cup  g_+^{N(J)}(T)$ with $A_R^+(k)=\Psi((f_+^{-q_{2k}}(d^+),c^+])$ from~\ref{defarp}.
\end{itemize}
By pulling back by $h$ this partition on $J\cup T$, we define the new intervals of the decomposition $\mathcal{P}'$:

\begin{itemize}
\item the interval $h^{-1}(A^+)$ is a regular interval;
\item the pulling back by $h$ of the primary intervals of $\II^+$ are gaps or rough intervals with order strictly between $N(J)$ and $N(J')$;
\item the interval $T$ has been extended to $T'=h^{-1}(A^+_R(k))\cup T$ (or to $T'=h^{-1}(A_R^+)\cup T$ if $N(J')-N(J)= q_{2M_0+1}$).
\end{itemize}

\subsubsection{} We have defined the new partition $\mathcal{P}'$. Now we prove:

\begin{proposition} The partition $\mathcal{P}'$ is a decomposition of $A^+_L$.
\end{proposition}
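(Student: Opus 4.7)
The plan is to check the three conditions of Definition \ref{dec} one at a time for the partition $\mathcal{P}'$ obtained from $\mathcal{P}$ by the construction of subsections \ref{co1}--\ref{co2}.

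First I would verify condition (i), that every element of $\mathcal{P}'$ is either a gap, a rough interval, or a regular interval. The intervals inherited unchanged from $\mathcal{P}$ are already of the required type. For each pair $(J,T)$ of adjacent intervals with $J$ rough, $T$ a gap, one uses the homeomorphism $h=g_+^{N(J)}\colon J\cup T\to \II^+\cup g_+^{N(J)}(T)$ and the fact, obtained from the construction of the primary decomposition in \ref{pint}, that the partition of $\II^+\setminus\{d^+\}$ into $A^+$ and the primary intervals of $\II^+$ consists only of regular intervals, rough intervals and gaps. Pulling back by $h$ and using that $g_+^n(J\cup T)\cap A^+=\emptyset$ for $0\le n<N(J)$, the interval $h^{-1}(A^+)$ becomes a regular interval of order $N(J)+N(A^+)$; each pulled-back primary rough (resp.\ gap) interval of $\II^+$ becomes a rough (resp.\ gap) interval whose order is $N(J)$ plus the order of the corresponding primary interval. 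The delicate case is the enlarged interval $T'=h^{-1}(A_R^+(k))\cup T$: I would check that $g_+^{N(T)}(T')$ has the form $(f_+^m(c^+),c^+)$ required in Definition~\ref{reg1}, using the identification $\Psi$ of left neighborhoods of $c^+$ and $d^+$ from \ref{defpsi} together with the description of $A_R^+(k)$ in \ref{defarp}. The same check handles the boundary case $N(J')-N(J)=q_{2M_0-1}$.

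Second, condition (ii) (local finiteness away from $c^+$) follows because the primary intervals of $\II^+$ accumulate only at $c^+$, and $h$ is a homeomorphism of $J\cup T$; thus the new intervals obtained inside $J\cup T$ accumulate only at $h^{-1}(c^+)$, which lies inside the regular interval $h^{-1}(A^+)$ and hence does not produce accumulation outside a neighborhood of $c^+$ in $A_L^+$. Since the primary decomposition $\mathcal{P}$ is already locally finite away from $c^+$, and each refined pair $(J,T)$ contributes finitely many intervals outside any neighborhood of $h^{-1}(c^+)$, the new partition remains locally finite outside any neighborhood of $c^+$.

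Third, I would verify the monotonicity condition \ref{dec3}. Within a single refined block $J\cup T$, the order of the new pieces is the sum of $N(J)$ and the order of the corresponding piece in the partition of $\II^+\cup g_+^{N(J)}(T)$; since primary intervals of $\II^+$ have orders bounded by $q_{2k}$ and the new $T'$ (being to the right of all of them in $A_L^+$) has order $N(T)=N(J')$, monotonicity inside the block follows from the analogous monotonicity inside the primary decomposition of $\II^+$, which was established in \ref{pint}. At the interfaces with unchanged neighbors one uses that $N(T')=N(T)$ on the right and that the leftmost new piece has order $\ge N(J)$ on the left, together with the monotonicity of $\mathcal{P}$ itself.

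The main obstacle will be the third step: the orders assigned to the refined pieces come from two different sources (the map $h$ and the primary partition of $\II^+$), and checking that the resulting sequence of orders is nondecreasing across the whole of $A_L^+$, and in particular across the boundaries of a refined block, requires careful bookkeeping of the combinatorial identities $N(J')-N(J)=q_{2k-1}+lq_{2k}$ from \ref{co1} together with the exact orders of primary intervals of $\II^+$ and the definition of $A_R^+(k)$.
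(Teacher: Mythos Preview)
Your overall framework---checking the three conditions of Definition~\ref{dec}---is the same as the paper's, but you have misjudged where the work lies, and the crucial verification is missing from your outline.

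The paper's proof is devoted almost entirely to showing that the enlarged interval $T'$ is a gap; once that is done, the authors simply say ``the other parts of Definition~\ref{dec} will then be easily satisfied.'' The point you do not address is the \emph{return condition}~\eqref{return} for $T'$: you only propose to check that $g_+^{N(T)}(T')$ has the correct endpoint form $(f_+^m(c^+),c^+)$, but Definition~\ref{reg1} also requires $g_+^n(T')\cap A^+=\emptyset$ for all $0\le n<N(T')=N(J')$. For the original gap $T$ this is known, and for $0\le n<N(J)$ the added piece $h^{-1}(A_R^+(k))$ sits inside $J$, so it is fine. But for $N(J)\le n<N(J')$ one must show that $g_+^{\,n-N(J)}(A_R^+(k))$ avoids $\II^+$, and this is exactly where the combinatorics enter: the paper uses Proposition~\ref{rotationp} together with the inequality $N(J')-N(J)=q_{2k-1}+lq_{2k}<q_{2k}+q_{2k+1}$ to conclude. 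This step is the heart of the argument and is absent from your plan.

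A few minor corrections. The regular interval $h^{-1}(A^+)$ has order $N(J)$, not $N(J)+N(A^+)$; the target $A^+$ carries no order. The primary intervals of $\II^+$ accumulate at $d^+$, not at $c^+$; but more to the point, only the finitely many primary intervals with order $\le q_{2k}$ are used in each refined block, so condition~(ii) is immediate once you note that $\mathcal{P}$ itself is locally finite away from $c^+$. Finally, the monotonicity condition~(iii) that you flag as the ``main obstacle'' is in fact straightforward bookkeeping once the orders of the new pieces are identified: they lie between $N(J)$ (for $h^{-1}(A^+)$ on the left) and $N(J')=N(T)$ (for $T'$ on the right), and the intermediate pulled-back primary intervals inherit the monotone order from the primary decomposition via the increasing map $h$.
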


\begin{proof} We consider the situation described in Sections~\ref{co1} to~\ref{co2} and show that $T'$ is a gap (with order $N(J')$). The other parts of Definition~\ref{dec} will then be easily satisfied. In the particular case $N(J')-N(J)= q_{2M_0-1}$, one gets immediately that the intervals $g_+^n(A_R^+)$ for $1\leq n\leq q_{2M_0+1}$ do not meet $\II^+$
(see Section~\ref{defar}). Since $a^+$ is $q_{2M_0}$-periodic, we have $f^{q_{2M_0-1}}(T')=\interior(A_L^+)$.

We now suppose $N(J')-N(J)=q_{2k-1}+lq_{2k}>q_{2M_0-1}$.
By construction for $0\leq n\leq N(J)$, the interval $g_+^n(T')$ does not meet $A^+$. As $g_+^{N(J)}(T')=A_R^+(k)\cup T$, and as $T$ is a gap we only consider the intervals $g_+^n(A_R^+(k))$ for $0\leq n\leq N(J')-N(J)$.
Since $N(J')-N(J)<q_{2k}+q_{2k+1}$, they do not meet $\II^+$, by Proposition~\ref{rotationp}.

Note that the left endpoint of $g^{N(J')-N(J)}(A_R(k))$ is $f_+^{q_{2k-1}+(l-1)q_{2k}}(c^+)$, and belongs to $A_L^+$. Thus $T'$ is a gap.
\end{proof}

\subsubsection{Step 3. The regular decomposition.}

\begin{proposition} There exists a regular decomposition $\mathcal{P}^+$ of $A_L^+$.
\end{proposition}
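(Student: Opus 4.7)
The plan is to construct $\mathcal{P}^+$ as the ``limit'' of the sequence of decompositions $\mathcal{P}_0,\mathcal{P}_1,\mathcal{P}_2,\ldots$ obtained by iterating the refinement procedure of Step 2: set $\mathcal{P}_{n+1}:=(\mathcal{P}_n)'$, starting from the primary decomposition $\mathcal{P}_0$ of Step 1. From the construction in \ref{co2} one reads off three monotonicity properties that I would verify first. Across the refinement $\mathcal{P}_n\leadsto\mathcal{P}_{n+1}$:
(a) every regular interval of $\mathcal{P}_n$ is also a regular interval of $\mathcal{P}_{n+1}$;
(b) each gap of $\mathcal{P}_n$ is contained in a (possibly larger) gap of $\mathcal{P}_{n+1}$, and is equal to it unless it is the right neighbour $T$ of some rough interval $J$, in which case it gets extended to $T'=h^{-1}(A_R^+(k))\cup T$;
(c) each rough interval $J\in\mathcal{P}_n$ is partitioned into a regular interval $h^{-1}(A^+)$, finitely many pull-backs of primary intervals of $\mathbb{I}^+$ (each of strictly larger order), and a contribution $h^{-1}(A_R^+(k))$ to the extended gap on its right. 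In particular, the finiteness outside any neighbourhood of $c^+$ required by Definition~\ref{dec} is preserved, and the order-monotonicity~\ref{dec3} persists from one step to the next.

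Denote by $\mathcal{R}_n\subset A_L^+$ the union of the rough intervals of $\mathcal{P}_n$. The main quantitative step is to prove that there exists $\theta\in(0,1)$, independent of $n$, with
\[\lambda(\mathcal{R}_{n+1})\;\leq\;\theta\,\lambda(\mathcal{R}_n).\]
For each rough interval $J\in\mathcal{P}_n$, the refinement replaces $J$ by $h^{-1}(A^+)$ (regular), pull-backs of primary intervals of $\mathbb{I}^+$ (gaps or rough), and $h^{-1}(A_R^+(k))$ (absorbed into a gap). The diffeomorphism $g_+^{N(J)}$ sends $J\cup T$ onto $\mathbb{I}^+\cup g_+^{N(J)}(T)$ with distortion uniformly bounded by Proposition~\ref{distorp1}, so
\[\frac{|h^{-1}(A^+)|}{|J|}\;\geq\;(1+D_1)^{-1}\,\frac{|A^+|}{|\mathbb{I}^+|}\;=:\;c>0.\]
Since $h^{-1}(A^+)$ is regular (hence lies in $\mathcal{P}_{n+1}\setminus\mathcal{R}_{n+1}$), it contributes a definite fraction of $|J|$ that leaves the rough set at this step. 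Summing over all $J\in\mathcal{P}_n$ contained in $\mathcal{R}_n$ gives $\lambda(\mathcal{R}_{n+1})\leq (1-c)\,\lambda(\mathcal{R}_n)$, hence $\lambda(\mathcal{R}_n)\to 0$ geometrically.

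Next I would define $\mathcal{P}^+$ as the countable collection consisting of all regular intervals that appear in some $\mathcal{P}_n$, together with the ``limit gaps'' obtained as follows: by (b) the gaps form a nested sequence across $n$, and a gap eventually stabilises once its left neighbour ceases to be rough (which happens as soon as the rough interval on its left is split). The leftmost gap of $\mathcal{P}_0$ and every gap whose left neighbour in some $\mathcal{P}_n$ is a regular interval is already stable. For gaps that keep growing across infinitely many steps, I take the increasing union, which is again an open interval of $A_L^+$. The order-monotonicity~\ref{dec3} and the local finiteness outside neighbourhoods of $c^+$ pass to the limit, and by construction $\mathcal{P}^+$ contains no rough interval. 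Finally,
\[\lambda\!\left(A_L^+\setminus\bigcup_{J\in\mathcal{P}^+}J\right)\;\leq\;\lim_{n\to\infty}\lambda(\mathcal{R}_n)\;=\;0,\]
so $\mathcal{P}^+$ is a regular decomposition of $A_L^+$ in the sense of Definition~\ref{dec}.

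The main technical obstacle is the uniformity of the constant $c$ above: one must ensure that the distortion bound of Proposition~\ref{distorp1} applies to \emph{every} rough interval produced in the iteration. This is where the structural choices of Section~\ref{defm0}--\ref{def-hat-II} enter — rough intervals are always pull-backs of $\mathbb{I}^+$ by an iterate of $g_+$ along an orbit avoiding $A^+$, and the extension $\hat J$ mapping onto $\hat A^+=\hat{\mathbb{I}}^+$ is available because, by Remark~\ref{feapr}, the orbit of the extended interval also avoids $A^+$ along the way. Once this uniformity is in place, the rest is bookkeeping on the order-monotone, locally-finite structure of the partitions $\mathcal{P}_n$.
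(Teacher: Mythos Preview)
Your overall plan --- iterate the refinement $\mathcal{P}_{n+1}=(\mathcal{P}_n)'$ and take the limit --- is exactly the paper's. The difference is in how the ``limit'' is shown to be a decomposition.

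The paper's argument is purely combinatorial and shorter: whenever a rough interval $J$ is refined, every new rough interval produced inside it has order strictly larger than $N(J)$; hence the minimum order among rough intervals in $\mathcal{P}_n$ increases by at least one with $n$. Since decompositions are finite away from $c^+$ and orders are monotone (Definition~\ref{dec}\ref{dec3}), for each bound $M$ the intervals of order $\leq M$ eventually consist only of regular intervals and gaps, and thereafter never change. The set of intervals that are eventually stable is the desired regular decomposition. No distortion estimate is used at this stage.

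Two points in your write-up deserve correction. First, your worry about ``gaps that keep growing across infinitely many steps'' is vacuous: when a gap $T$ is extended, its order $N(T)$ is unchanged while the order of its (rough) left neighbour strictly increases and remains $<N(T)$; since orders are integers, after finitely many refinements the left neighbour is regular and $T$ stabilises. So there is no need to form increasing unions, and you avoid having to check that such a union is still a ``gap'' in the sense of Definition~\ref{reg1} (which you do not verify). Second, your concluding step ``$\lambda(A_L^+\setminus\bigcup_{J\in\mathcal{P}^+}J)=0$, so $\mathcal{P}^+$ is a decomposition'' is not sufficient as written: Definition~\ref{dec} asks for a genuine \emph{partition} of $\interior(A_L^+)$, not a measurable one, and your metric estimate only controls the measure of the set of points lying in a rough interval at every stage. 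The paper's order-stabilisation argument addresses coverage directly via the locally-finite, order-monotone structure of each $\mathcal{P}_n$, rather than through a measure bound. Your distortion estimate $\lambda(\mathcal{R}_{n+1})\leq(1-c)\lambda(\mathcal{R}_n)$ is correct and is precisely the mechanism used later in Proposition~\ref{propreg}, but it is not the ingredient needed here.
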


\begin{proof} Let us call $\mathcal{P}_0$ the primary decomposition of $A_L^+$. By refining inductively the decomposition, one gets a sequence of decompositions $(\mathcal{P}_k)_{k\in\mathbb{N}}$. One sees easily that for $k$ large the intervals with bounded order are the same in all decompositions $\mathcal{P}_k$. Two successive decompositions $\mathcal{P}_k$ and $\mathcal{P}_{k+1}$ can not contain a same gap or rough interval. Hence, the set of intervals $J$ of $A_L^+$ such that there exist some $m$ with $J\subset \mathcal{P}_k$ for any $k\geq m$ is a regular decomposition of $A_L^+$.
\end{proof}

\subsubsection{}\label{regp} Note that regular intervals belong to $\mathcal{M}^+$ (see Section~\ref{feap}). Hence they are disjoint. One shows now that $\mathcal{M}^+$ gives a measurable partition of $A_L^+$.

\begin{proposition}\label{propreg} The set of regular intervals for $g_+$ in $A_L^+$ defines a measurable partition.
\end{proposition}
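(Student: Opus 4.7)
To prove $\lambda\bigl(A_L^+ \setminus \bigcup_{J \in \mathcal{M}^+} J\bigr) = 0$, the plan is to iterate the regular decomposition construction of Subsection~\ref{decompositions} and obtain a geometric bound on the Lebesgue measure of the residual set.

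First I establish that the total Lebesgue measure of gaps in the regular decomposition $\mathcal{P}^+$ is at most $\eta\,|A_L^+|$. Since $\mathcal{P}^+$ is regular it contains no rough intervals, and since gaps are open while regular intervals are closed, Remark~\ref{decr} forces gaps and regular intervals to alternate along $A_L^+$. By Lemma~\ref{ptes}, each gap $T \in \mathcal{P}^+$ satisfies $|T| < \eta\,|J|$, where $J$ is the regular interval immediately to the right of $T$. As distinct gaps have distinct right-neighbours, summing gives
\[
\sum_{T \in \operatorname{gaps}(\mathcal{P}^+)} |T| \;\leq\; \eta \sum_{J \in \operatorname{regular}(\mathcal{P}^+)} |J| \;\leq\; \eta\,|A_L^+|.
\]

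Next I refine each gap recursively. For a gap $T \in \mathcal{P}^+$ of order $N=N(T)$, Definition~\ref{reg1} implies that $g_+^{N}$ maps $T$ homeomorphically onto some $(x_1, c^+) \subset A_L^+$, and the analogue of Proposition~\ref{inducp} for $g_+$ extends this to a diffeomorphism from an interval $\hat{T} \supset T$ onto a neighbourhood of $(x_1, c^+)$. Applying the construction of Subsection~\ref{decompositions} (with $A_L^+$ replaced by $(x_1, c^+)$) and pulling back by $g_+^{N}$ produces a decomposition of $T$ into new regular intervals---which are elements of $\mathcal{M}^+$ of order strictly greater than $N$---together with smaller gaps. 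The bounded-distortion estimate of Proposition~\ref{distorp1}, applied to the diffeomorphism $g_+^{N}: \hat{T} \to g_+^{N}(\hat{T})$, then shows that the total Lebesgue measure of the new gaps inside $T$ is at most $K\eta\,|T|$ for a universal constant $K$.

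By Lemma~\ref{ptes} one may choose $M_0$ large enough that $K\eta < 1$. Iterating the refinement, the Lebesgue measure of the subset of $A_L^+$ not yet covered by regular intervals after $k$ refinements is at most $(K\eta)^k\,|A_L^+|$, which tends to $0$ as $k\to\infty$. Hence $\mathcal{M}^+$ covers $A_L^+$ up to a Lebesgue null set. The main obstacle is the bookkeeping for the iterative pullback: at every step one must check that the extension $\hat{T}$ is mapped by $g_+^{N}$ onto a domain containing an appropriate extension of $(x_1, c^+)$, so that Proposition~\ref{distorp1} applies with a distortion constant $K$ independent of the depth, and that the regular decomposition construction of Subsection~\ref{decompositions} generalises verbatim to every left-open subinterval of $A_L^+$ with right endpoint $c^+$ arising as $g_+^{N}(T)$.
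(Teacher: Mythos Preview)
Your overall strategy---iteratively refine the gaps and use Lemma~\ref{ptes} together with bounded distortion to get geometric decay of the residual measure---is exactly what the paper does, and the estimate $\sum_{\text{gaps}}|T|\leq \eta(1+D_1)|J|$ at each stage is the same. The difference lies in how the refinement of a gap is carried out.

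You propose to rerun the entire construction of Subsection~\ref{decompositions} on each image interval $(x_1,c^+)$ and then pull back. As you yourself note, this requires checking that the primary decomposition and the refinement procedure (Steps~1 and~2) generalise to an arbitrary left-open subinterval of $A_L^+$ with right endpoint $c^+$; that is not automatic, since the construction of $\mathcal{P}_0$ uses the specific left endpoint $a^+$ and its periodic orbit. The paper avoids this detour entirely: at each stage it simply pulls back the \emph{fixed} partition $\mathcal{M}_0=\mathcal{P}^+$ by $f^{N(J)}$. The only thing to check is that this pullback does not cut any regular interval of $\mathcal{M}_0$ in two, i.e.\ that the left endpoint $x_1$ of $f^{N(J)}(J)$ never lies in the interior of a regular interval $J'\in\mathcal{M}_0$. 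This follows from the form of $x_1$ recorded in Definition~\ref{reg1}: one has $x_1=f_+^m(c^+)$ or $x_1=f_+^m(a^+)$, hence $f^n(x_1)\notin\interior(A^+)$ for every $n\geq 0$, so $x_1$ cannot be interior to any interval that returns homeomorphically onto $A^+$. This single observation replaces your ``main obstacle'' and makes the inductive step a one-line pullback rather than a reconstruction.
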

%
%
%
%

\begin{proof}[Proof of Proposition~\ref{propreg}] We define inductively a sequence of measurable partitions $\mathcal{M}_k$ of $A_L^+$ whose elements are regular intervals or gaps. First $\mathcal{M}_0$ is a regular decomposition of $A_L^+$. If $J\in\mathcal{M}_k$ is regular, one sets $J\in\mathcal{M}_{k+1}$. If $J\in\mathcal{M}_k$ is a gap, one refines it by pulling back by $f^{N(J)}$ the partition $\mathcal{M}_0$: for any interval $J'\in\mathcal{M}_0$ such that $f^{N(J)}(J)\cap J'\not =\emptyset$, one sets $J''=J\cap g_+^{-N(J)}(J')$ and $J''\in\mathcal{M}_{k+1}$. Either $J'$ is a gap (and one checks easily that $J''$ is a gap) or $J'$ is a regular interval. In this second case, we consider $(x_1,c^+)=f^{N(J)}(J)$. By Definition~\ref{reg1}, $x_1=f_+^m(c^+)$ or $x_1=f_+^m(a^+)$ for some $m\in\mathbb{Z}$. Hence, for any $n\in\mathbb{N}$, $f^n(x_1)\not \in \interior(A^+)$. As $J'$ returns to $A^+$ homeomorphically, one gets $x_1\not\in \interior(J')$ and $J'\subset f^{N(J)}(J)$. Consequently, $J''$ is regular. Since $f^{N(J)}$ is a diffeomorphism for any gap $J\in\mathcal{M}_k$, and since $\mathcal{M}_0$ is a measurable partition, the collection $\mathcal{M}_{k+1}$ is a measurable partition of $A_L^+$.

Any $J\in\mathcal{M}_k$ decomposes into regular intervals and gaps of $\mathcal{M}_{k+1}$. Lemma~\ref{ptes} gives$$\sum_{\substack{J'' \text{ gap of } \mathcal{M}_{k+1},\\J''\subset J}} |J''|\leq \eta (1+D_1) |J|.$$

One can assume that $\eta$ is small and $\eta (1+D_1)<1$. This implies that
\begin{equation}\label{estgap}
\sum_{J \text{ gap of }
\mathcal{M}_{k}}|J|\leq (\eta(1+D_1))^n\sum_{J \text{ gap of }
\mathcal{M}_{0}}|J|
\underset{n\rightarrow \infty}\longrightarrow0.
\end{equation}

This ends the proof.
\end{proof}

\subsubsection{}\label{regm} The same constructions could be done for the lower map $g_-$. There exists a regular decomposition $\mathcal{P}^-$ of $A_R^-$. One defines also regular intervals of $A_R^-$ for $g_-$. They belong to $\mathcal{M^-}$ and defines a measurable partition of $A_R^-$.

\subsection{The summability conditions}\label{sumcond} 

We assume in this subsection that Condition \ref{C+} is satisfied and we prove into three steps that the measurable partition $\mathcal{M}^+$ of $A_L^+$ is summable:

\begin{description}
\item[Step 1] The primary decomposition as defined at~\ref{pint} is summable.
\item[Step 2] The summability is preserved by refinement.
\item[Step 3] The measurable partition $\mathcal{M}^+$ of $A_L^+$ is summable. 
\end{description}

The same arguments apply for the lower map $g_-$ and the partition $\mathcal{M}^-$.

\subsubsection{Step 1. Summability of the primary decomposition}\label{sc1}
Let us start with a Lemma 
\begin{lemma}\label{in1}
There exists a constant $\gamma>1$ such that for all small $\epsilon$ and for $k$ large enough
\begin{equation*}
\begin{split}
\sum_{l=1}^{a_{2k+2}-1}(q_{2k}+lq_{2k+1})|I_{k,l}^+|\leq
&\left(\sum_{n\geq 0}(n+2)\gamma^{-n}\right)q_{2k+1}|I_{k,1}^+|+\\
&\qquad \left(1+\frac{1-\varepsilon}{1+\varepsilon}\gamma\right).q_{2k+2}\dd(f^{q_{2k+1}}(c^+),c^+).
\end{split}
\end{equation*}
\end{lemma}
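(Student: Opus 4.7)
The plan is to prove Lemma~\ref{in1} by combining a uniform geometric contraction between consecutive intervals $I_{k,l}^+$ with a sharp boundary estimate for the last interval $I_{k,a_{2k+2}-1}^+$ in terms of the closest-return distance $\dd(f^{q_{2k+1}}(c^+),c^+)$.

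First, I would establish, for $k$ large and $\varepsilon$ small, a real-bounds inequality of the form
\begin{equation*}
|I_{k,l+1}^+|\;\leq\;\gamma^{-1}\,|I_{k,l}^+|,\qquad 1\leq l\leq a_{2k+2}-2,
\end{equation*}
for some fixed $\gamma>1$. Indeed, $I_{k,l+1}^+=f_+^{-q_{2k+1}}(I_{k,l}^+)$ via the appropriate inverse branch of $f_+^{q_{2k+1}}$, and by Proposition~\ref{rotationp} the intermediate iterates $f_+^{j}(I_{k,l+1}^+)$ for $0<j<q_{2k+1}$ avoid a definite neighbourhood of the critical points. Proposition~\ref{distorp1} therefore yields bounded distortion of this inverse branch, and the uniform expansion of $f_+^{q_{2k+1}}$ away from $c^+$, combined with the Graczyk estimates of Section~\ref{S:geomest}, delivers the contraction factor $\gamma^{-1}$; sharpness in $\varepsilon$ is obtained by taking $M_0$ large.

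Second, I would prove that for $k$ large,
\begin{equation*}
|I_{k,a_{2k+2}-1}^+|\;\leq\;(1+\varepsilon)\,\dd\!\bigl(f^{q_{2k+1}}(c^+),c^+\bigr).
\end{equation*}
The interval $I_{k,a_{2k+2}-1}^+$ lies just to the left of $c^+$, with left endpoint $f_+^{-q_{2k+2}+q_{2k+1}}(c^+)$. Proposition~\ref{rotationp} applied to the rotation $R_{\rho^+}$, together with the homeomorphism $\Psi$ of Section~\ref{defpsi} and a Koebe distortion correction (giving the prefactor $(1+\varepsilon)$ once $M_0$ is large), places this interval inside a neighbourhood of $c^+$ of size essentially $\dd(f^{q_{2k+1}}(c^+),c^+)$.

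Combining these, I split the sum as
\begin{equation*}
\sum_{l=1}^{a_{2k+2}-1}(q_{2k}+lq_{2k+1})|I_{k,l}^+|=\sum_{l=1}^{a_{2k+2}-2}(q_{2k}+lq_{2k+1})|I_{k,l}^+|+(q_{2k+2}-q_{2k+1})|I_{k,a_{2k+2}-1}^+|.
\end{equation*}
In the first piece, use $q_{2k}+lq_{2k+1}\leq(l+1)q_{2k+1}$ and iterate the contraction to obtain $|I_{k,l}^+|\leq\gamma^{-(l-1)}|I_{k,1}^+|$, producing
\begin{equation*}
\sum_{l=1}^{a_{2k+2}-2}(q_{2k}+lq_{2k+1})|I_{k,l}^+|\;\leq\; q_{2k+1}|I_{k,1}^+|\sum_{n\geq 0}(n+2)\gamma^{-n}.
\end{equation*}
In the second piece, the boundary estimate bounds $|I_{k,a_{2k+2}-1}^+|$ by $\tfrac{1}{1+\varepsilon}$ times $(1+\varepsilon)^2\dd(f^{q_{2k+1}}(c^+),c^+)$; adding an additional $\gamma$ multiple coming from the transition at $l=a_{2k+2}-2$, where the contraction inequality above must be replaced by the boundary bound, and using $q_{2k+2}-q_{2k+1}\leq q_{2k+2}$, gives the announced contribution $\bigl(1+\tfrac{1-\varepsilon}{1+\varepsilon}\gamma\bigr)q_{2k+2}\dd(f^{q_{2k+1}}(c^+),c^+)$.

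The hard part will be justifying the uniform geometric contraction factor $\gamma^{-1}$ with the sharpness needed for the prefactor $\tfrac{1-\varepsilon}{1+\varepsilon}\gamma$: while Koebe and the combinatorics of rotations make the decay plausible, obtaining a universal $\gamma>1$ together with the $\varepsilon$-refinements requires combining the distortion estimates of Section~\ref{distprop} with Graczyk's geometry (Theorem~\ref{thmgraczyk} and Remark~\ref{rkgraczyk}) in a quantitative way.
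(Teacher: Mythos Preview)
Your proposed uniform contraction $|I_{k,l+1}^+|\leq\gamma^{-1}|I_{k,l}^+|$ for \emph{all} $1\leq l\leq a_{2k+2}-2$ is not true, and this is the heart of the matter. The intervals $I_{k,l}^+$ accumulate on $c^+$ as $l$ increases, and the iterate $F=f^{q_{2k+1}}$ sending $I_{k,l}^+$ onto $I_{k,l-1}^+$ factors as $f^{q_{2k+1}-1}\circ f$ with the \emph{first} application of $f$ taken at points of $I_{k,l}^+$ arbitrarily close to the critical point. Your appeal to Proposition~\ref{rotationp} only controls the iterates $f^j(I_{k,l}^+)$ for $0<j<q_{2k+1}$; it says nothing about $j=0$. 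Since $|Df|$ vanishes at $c^+$, one has $|DF(x)|\asymp \ell^+A\,\dd(x,c^+)^{\ell^+-1}\to 0$ as $x\to c^+$, so for $l$ large $|I_{k,l-1}^+|=|F(I_{k,l}^+)|<|I_{k,l}^+|$: the sequence $|I_{k,l}^+|$ eventually \emph{increases}. Splitting off only the single term $l=a_{2k+2}-1$ cannot repair this when $a_{2k+2}$ is large.

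The paper's argument proceeds differently. It writes $F$ near $c^+$ in the normal form $\dd(F(x),c^+)\approx A\,\dd(x,c^+)^{\ell^+}+B$ with $B=\dd(f^{q_{2k+1}}(c^+),c^+)$, fixes $\gamma>1$ with $\gamma/\ell^+<1$, and introduces a $k$-dependent threshold point $y$ where $|DF|\approx\gamma$. The crucial estimate is that, because $F(y)\notin[y,c^+]$ (no periodic point of $F$ lies there), the whole region $(y,c^+)$ has length bounded by a constant times $B$. The dichotomy is then: either $I_{k,l}^+$ lies to the left of $y$, where $|DF|\gtrsim\gamma$ gives the geometric decay and the term $\bigl(\sum_{n\geq0}(n+2)\gamma^{-n}\bigr)q_{2k+1}|I_{k,1}^+|$; or some $I_{k,l}^+$ lies in $(y,c^+)$, and then the total length of all such intervals, weighted by $q_{2k+2}$, produces the second term. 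The constant $1+\tfrac{1-\varepsilon}{1+\varepsilon}\gamma$ records precisely this transition, not a single boundary interval. To fix your argument you must introduce such a threshold and exploit the power-law structure of $f$ at $c^+$ rather than invoke a uniform expansion that does not exist.
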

\begin{proof}
 We consider the map $F=f^{q_{2k+1}}=f^{q_{2k+1}-1}\circ f$ on $U=(f_+^{-q_{2k}-q_{2k+1}}(d^+),c^+)$. On one hand, we remind that near $\tilde c^+$ we have $\tilde f=\tilde f(\tilde c^+)-|\psi^+|^{\ell^+}$ (from assumption (A1)). On the other hand, from Koebe Theorem~\ref{koebethm}, the distortion of $f^{q_{2k+1}-1}$ on the interval $f(U)$ is arbitrarily small if $k$ is large enough: by Proposition~\ref{rotation}, the iterates $f^k(U)$ for $0\leq k \leq q_{2k+1}-1$ are disjoint; there exists also an interval $\hat I\supset f(U)$ that is sent homeomorphically by $f^{q_{2k+1}-1}$ onto $\hat A^+$ ($\hat I$ is the extension of the interval $f_+^{-q_{2k+1}+1}(A^+)$, see Section~\ref{feap}); if $k$ is large enough, $\Di (f^{q_{2k+1}}(U),\hat A^+)$ is small (see Section~\ref{distest}).

Let $\gamma>1$ such that $\gamma/\ell^+<1$.
Hence, for any small constant $\varepsilon>0$ and $k$ large enough, there exist two constants $A>0$, $B=\dd(f^{q_{2k+1}}(c^+),c^+)$
such that for any $x\in (f_+^{-q_{2k}-q_{2k+1}}(d^+),c^+)$,

\begin{equation*}
\begin{split}
(1-\varepsilon)(A\dd(x,c^+)^{\ell^+}+B)\leq d(F(x),c^+)\leq (1+\varepsilon)(A\dd(x,c^+)^{\ell^+}+B),\\
(1-\varepsilon)\ell^+A\dd(x,c^+)^{\ell^+-1}\leq | \D F(x)|\leq (1+\varepsilon)\ell^+A\dd(x,c^+)^{\ell^+-1}.
\end{split}
\end{equation*}

Let us consider now the point $y\in [f_+^{-q_{2k}-q_{2k+1}}(d^+),c^+)$ such that
$$(1+\varepsilon)\ell^+A\dd(y,c^+)^{\ell^+-1}=\gamma.$$
By definition of $y$, $|\D F(y)|\leq \gamma$ and from the combinatorics of $F$ on $U$ (see Proposition~\ref{rotation}) $F(y)\not \in [y,c^+]$. This implies that:$$\gamma\geq |\D F(y)|\geq (1-\varepsilon)\ell^+A\dd(y,c^+)^{\ell^+-1}\text{ and }(1+\varepsilon)(A\dd(y,c^+)^{\ell^+}+B)\geq\dd(y,c^+).$$

One gets some constant $C_1>0$ such that
$$\dd(y,c^+)\leq \left(\frac{1}{1+\varepsilon}-\frac{\gamma}{\ell^+(1-\varepsilon)}\right)^{-1}B
\leq C_1. \dd(f^{q_{2k+1}}(c^+),c^+).$$

Let us consider the intervals $I_{k,l}^+$ for $1\leq l \leq a_{2k+2}$ (see Section~\ref{defm0}). We note that $F(I_{k,l}^+)=I_{k,l-1}^+$. Either $I_{k,l}^+$ is contained in $(f_+^{-q_{2k+1}}(d^+),y)$ and:$$|I_{k,l}^+|<\gamma|I_{k,l-1}^+|,$$or $I_{k,l+1}^+$ is contained in $(y,c^+)$
and $|DF|\geq \frac{1-\varepsilon}{1+\varepsilon}\gamma$ on $I_{k,l+1}^+$. One gets:

\begin{equation*}
\begin{split}
\sum_{l=1}^{a_{2k+2}-1}(q_{2k}+lq_{2k+1})|I_{k,l}^+|\leq
&\left(\sum_{n\geq 0}(n+2)\gamma^{-n}\right)q_{2k+1}|I_{k,1}^+|+\\
&\qquad \left(1+\frac{1-\varepsilon}{1+\varepsilon}\gamma\right).q_{2k+2}\dd(f^{q_{2k+1}}(c^+),c^+).
\end{split}
\end{equation*}
\end{proof}

\begin{proposition}\label{primsum} Under \ref{C+}, the primary decomposition $\mathcal{P}_0$ is summable.
\end{proposition}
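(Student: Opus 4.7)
The plan is to bound $\sum_{J\in\mathcal{P}_0}N(J)|J|$ by separately handling rough intervals and gaps, then reducing the rough-interval sum to the series in~\ref{C+} via Lemma~\ref{in1}.

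First, I will show that the gap contribution is absorbed into the rough-interval contribution. By Remark~\ref{decr}(ii), each gap $T\in\mathcal{P}_0$ inherits the order of the rough interval $J'$ immediately to its right, i.e.\ $N(T)=N(J')$. The bounded-distortion argument of Lemma~\ref{ptes} applies verbatim with $J'$ in place of a regular interval: the map $f^{N(J')}$ is a diffeomorphism on $T\cup J'$ with distortion bounded by $D_1$ (Proposition~\ref{distorp1}), $f^{N(J')}(J')=\II^+$, and $f^{N(J')}(T)\subset A_L^+$, yielding $|T|\leq(1+D_1)\tfrac{|A_L^+|}{|\II^+|}|J'|$, a bound that can be made arbitrarily small by enlarging $M_0$. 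Summing, $\sum_{T}N(T)|T|\leq\eta\sum_{J'}N(J')|J'|$ with $\eta<1$, so it suffices to bound the rough-interval contribution $S_{\mathrm{rough}}$.

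Second, I will split
\[
S_{\mathrm{rough}}=\sum_{k\geq M_0}q_{2k}|I_{k,0}^+|+\sum_{k\geq M_0}\sum_{l=1}^{a_{2k+2}-1}(q_{2k}+lq_{2k+1})|I_{k,l}^+|,
\]
apply Lemma~\ref{in1} to the double sum to produce two error terms $C\sum_k q_{2k+1}|I_{k,1}^+|$ and $C\sum_k q_{2k+2}\dd(f^{q_{2k+1}}(c^+),c^+)$, and reindex $k\mapsto k+1$ in the latter to recognize exactly the tail of the series from~\ref{C+}, which is finite by hypothesis.

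Third, I will dispatch the two leftover contributions $\sum_k q_{2k}|I_{k,0}^+|$ and $\sum_k q_{2k+1}|I_{k,1}^+|$. The combinatorics of closest returns from the left (Proposition~\ref{rotationp}) places both intervals inside $[f^{q_{2k-1}}(c^+),c^+]\cap A_L^+$, giving $|I_{k,0}^+|,|I_{k,1}^+|\leq\dd(f^{q_{2k-1}}(c^+),c^+)$. This immediately makes $\sum_k q_{2k}|I_{k,0}^+|\leq\sum_k q_{2k}\dd(f^{q_{2k-1}}(c^+),c^+)$ a tail of the~\ref{C+} series. For the $q_{2k+1}|I_{k,1}^+|$ term, since $q_{2k+1}/q_{2k}$ can be unbounded the naive bound is too weak, and I will instead use the expansion of $F=f^{q_{2k+1}}$ from $I_{k,1}^+$ onto $I_{k,0}^+$, controlled by Koebe distortion together with the non-flat form of $f$ at $c^+$ exactly as in the proof of Lemma~\ref{in1}, to compare $|I_{k,1}^+|$ with $|I_{k,0}^+|$ and to absorb the remaining contribution into the already-controlled $\sum_k q_{2k+2}\dd(f^{q_{2k+1}}(c^+),c^+)$ tail.

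The main technical obstacle is this last bound on $q_{2k+1}|I_{k,1}^+|$: the purely combinatorial estimate from the return structure is insufficient on its own, and closing the argument requires the non-flat critical structure at $c^+$ together with Koebe distortion—this is precisely why Lemma~\ref{in1} was set up with the $\ell^+$-dependent constants $\gamma$, and reusing that machinery will allow either direct comparison $|I_{k,1}^+|\leq C|I_{k,0}^+|$ (when $I_{k,1}^+$ lies in the $\gamma$-expanding region) or absorption into the Condition~\ref{C+} tail (when $I_{k,1}^+$ lies in the near-critical region).
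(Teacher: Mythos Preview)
Your outline tracks the paper's argument closely through the gap-handling and the invocation of Lemma~\ref{in1}, but there is a genuine gap in your third step, in the ``expanding'' branch of the dichotomy for $q_{2k+1}|I_{k,1}^+|$.

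In that branch you obtain only $|I_{k,1}^+|\leq C|I_{k,0}^+|$, hence $q_{2k+1}|I_{k,1}^+|\leq Cq_{2k+1}|I_{k,0}^+|$. Combined with $|I_{k,0}^+|\leq\dd(f^{q_{2k-1}}(c^+),c^+)$ this yields a term $q_{2k+1}\dd(f^{q_{2k-1}}(c^+),c^+)$, which is \emph{not} controlled by~\ref{C+}: the condition sums $q_{2k}\dd(f^{q_{2k-1}}(c^+),c^+)$, and the ratio $q_{2k+1}/q_{2k}\sim a_{2k+1}$ is unbounded. The Koebe/non-flat machinery of Lemma~\ref{in1} by itself gives no mechanism to absorb this extra factor of $a_{2k+1}$; the derivative of $F=f^{q_{2k+1}}$ on $I_{k,1}^+$ is governed by $\dd(\,\cdot\,,c^+)^{\ell^+-1}$ and carries no direct dependence on $a_{2k+1}$.

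The paper closes this gap by invoking Graczyk's geometric estimate (Remark~\ref{rkgraczyk}), which is a substantially deeper input: it gives
\[
\frac{\dd(I_{k,0}^+,c^+)}{|I_{k,0}^+|}\leq \exp\!\Big(-C_2^{-1}\prod_{0<j\leq k}\big(1+\tfrac{a_{2j+1}}{\ell^+}\big)+C_2\Big)\leq \exp\!\Big(-C_2^{-1}\big(1+\tfrac{a_{2k+1}}{\ell^+}\big)+C_2\Big).
\]
Since $I_{k,1}^+\subset(f_+^{-q_{2k}}(c^+),c^+)$ one has $|I_{k,1}^+|\leq\dd(I_{k,0}^+,c^+)$, and then
\[
q_{2k+1}|I_{k,1}^+|\leq(a_{2k+1}+1)\,q_{2k}\,\dd(I_{k,0}^+,c^+)\leq C_3\,q_{2k}|I_{k,0}^+|,
\]
because $(a_{2k+1}+1)\exp(-C_2^{-1}a_{2k+1}/\ell^+)$ is uniformly bounded. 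This is the missing ingredient: the exponential decay of $\dd(I_{k,0}^+,c^+)/|I_{k,0}^+|$ in $a_{2k+1}$ is precisely what neutralizes the unbounded $a_{2k+1}$ factor, and it does not follow from Koebe plus the local power-law form at $c^+$.
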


\begin{proof}[Proof of Proposition \ref{primsum}]
From Graczyk's estimates at Remark~\ref{rkgraczyk}, there are constants $C_2,C_3>0$ such that:
\begin{equation*}
\begin{split}
q_{2k+1}|I_{k,1}^+|&\leq(a_{2k+1}+1)q_{2k}\dd(|I_{k,0}^+|,c^+)\leq\\
&(a_{2k+1}+1)q_{2k}\exp\left(-C_2^{-1}\left(1+\frac{a_{2k+1}}{\ell^+}\right)+C_2\right)|I_{k,0}^+|\leq C_3\;q_{2k}|I_{k,0}^+|.
\end{split}
\end{equation*}
Therefore:
\begin{equation}\label{in2}
q_{2k+1}|I_{k,1}^+|\leq C_3\;q_{2k}|I_{k,0}^+|\leq C_3\;q_{2k}\dd(f^{q_{2k-1}}(c^+),c^+).
\end{equation}
Combining~Lemma \ref{in1} and~(\ref{in2}) we obtain with some uniform constant $C_4>0$,$$\sum_{l=0}^{a_{2k+2}-1}(q_{2k}+lq_{2k+1})|I_{k,l}^+|\leq C_4\left(q_{2k}\dd(f^{q_{2k-1}}(c^+),c^+)+q_{2k+2}\dd(f^{q_{2k+1}}(c^+),c^+)\right).$$
Consequently, \ref{C+} implies that the sum $\sum_{k\geq 0}\sum_{l=0}^{a_{2k+2}-1}N(I_{k,l}^+)|I_{k,l}^+|$ is finite.

Let $T$ be a gap of the primary decomposition. There exists a rough interval $I_{k,l}^+$ adjacent to $T$ such that $T<I_{k,l}^+$. Recall that $N(T)=N(I_{k,l}^+)$ and from Lemma~\ref{ptes}, $|T|<\eta |I_{k,l}^+|$. This ends the proof of the proposition.
\end{proof}

\subsubsection{Step 2. Summability of refined decompositions}\label{sc2}

\begin{proposition}\label{regsum} Under Condition \ref{C+}, any decomposition obtained by refining the primary decomposition $\mathcal{P}_0$ is summable.
\end{proposition}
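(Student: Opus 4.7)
My plan is to show that the sum $S_k := \sum_{J \in \mathcal{P}_k} N(J)|J|$ stays uniformly bounded as we iterate the refinement procedure of Section~\ref{co2} starting from $\mathcal{P}_0$; since the sequence $(\mathcal{P}_k)_{k \geq 0}$ exhausts the refinements of $\mathcal{P}_0$, this suffices.

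The first step is a single-step change formula. Let $(J_*, T_*)$ be an adjacent rough--gap pair in $\mathcal{P}_k$ with $N(J'_*) - N(J_*) = q_{2m-1} + l q_{2m}$ for some $m \geq M_0$ and $0 < l \leq a_{2m+1}$. Writing $h_* := g_+^{N(J_*)}$, which sends $J_*$ diffeomorphically onto $\II^+$, the new pieces inside $J_* \cup T_*$ are the regular interval $h_*^{-1}(A^+)$ of order $N(J_*)$, pull-backs $h_*^{-1}(I')$ of primary intervals $I' \subset \II^+$ of order $\leq q_{2m}$ (each of order $N(J_*) + N(I')$), and the enlarged gap $T' = T_* \cup h_*^{-1}(A_R^+(m))$ of order $N(T_*)$. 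Since these pieces partition $J_* \cup T_*$, a direct calculation gives
\[
\Delta_{J_*} = \sum_{I' \in \mathcal{I}_m} N(I')\,|h_*^{-1}(I')| + (N(T_*) - N(J_*))\,|h_*^{-1}(A_R^+(m))|,
\]
where $\mathcal{I}_m$ is the family of primary intervals of $\II^+$ of order $\leq q_{2m}$. The set $A_R^+(m)$ is partitioned (up to zero measure) by the remaining primary intervals $I''$ of $\II^+$, each of order $\geq q_{2m+1} \geq N(T_*) - N(J_*)$; hence the second summand is bounded by $\sum_{I'' \notin \mathcal{I}_m} N(I'')|h_*^{-1}(I'')|$, so the distortion bound of Proposition~\ref{distorp1} applied to $h_*$ yields
\[
\Delta_{J_*} \leq \frac{C\,|J_*|}{|\II^+|}\,\Sigma_{\II^+}, \qquad \Sigma_{\II^+} := \sum_{\text{primary } I' \subset \II^+} N(I')\,|I'|.
\]
Since $\Psi$ is a $C^2$-diffeomorphism on its compact domain and $N(\Psi(I_{k,l}^+)) = N(I_{k,l}^+)$, one has $|\Psi(I_{k,l}^+)| \leq L_\Psi |I_{k,l}^+|$, so Proposition~\ref{primsum} together with the $\II^+$-analogue of Lemma~\ref{ptes} (controlling the gap primary intervals of $\II^+$) gives $\Sigma_{\II^+} < \infty$.

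The remaining step is to show that the total rough length $R_k := \sum_{J_* \text{ rough in } \mathcal{P}_k} |J_*|$ decays geometrically: the rough intervals inside the refinement of $J_*$ are the pull-backs of rough primary intervals of $\II^+$, all contained in $A_R^+$, hence their combined length is at most $\tau |J_*|$ where $\tau := C |A_R^+|/|\II^+|$. By Lemma~\ref{def-a}, taking $M_0$ sufficiently large makes $b^+$ arbitrarily close to $d^+$, so $\tau < 1$ and $R_{k+1} \leq \tau R_k$. Summing the single-step estimate over all rough intervals of $\mathcal{P}_k$ and then over $k$:
\[
\sup_k S_k \;\leq\; S_0 + \frac{C\,\Sigma_{\II^+}}{|\II^+|}\cdot\frac{R_0}{1-\tau} \;<\; \infty.
\]
The main obstacle is the careful bookkeeping in the single-step identity, especially the absorption of $A_R^+(m)$ (a union of primary intervals of $\II^+$ of varying high orders) into the new gap; once this is handled, the distortion estimates of Section~\ref{distprop} and Proposition~\ref{primsum} close the argument.
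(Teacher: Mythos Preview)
Your approach is genuinely different from the paper's, and it has a real gap in the distortion step.

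The paper's argument is much shorter and uses no distortion control on rough intervals at depth $k$. It exploits only the monotonicity of the order in a decomposition (Definition~\ref{dec}(\ref{dec3})): for any refinement $\mathcal{P}$ and any piece $J\subset I_{k,l}^+\cup T_{k,l}$ with $1\le l<a_{2k+2}$, monotonicity gives $N(J)\le N(I_{k,l+1}^+)<2N(I_{k,l}^+)$, so the contribution of $I_{k,l}^+\cup T_{k,l}$ to $\sum_{J\in\mathcal{P}}N(J)|J|$ is at most $2(N(I_{k,l}^+)|I_{k,l}^+|+N(T_{k,l})|T_{k,l}|)$. A small variant handles $l=0$. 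This yields $\sum_{J\in\mathcal{P}}N(J)|J|\le 3\sum_{J_0\in\mathcal{P}_0}N(J_0)|J_0|$ uniformly over all refinements, with Proposition~\ref{primsum} as the only analytic input.

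Your argument, by contrast, needs the estimate $|h_*^{-1}(I')|\le C\,|I'|\,|J_*|/|\II^+|$ uniformly over all rough intervals $J_*$ appearing at any stage, and you attribute this to Proposition~\ref{distorp1}. That proposition, however, controls distortion only on $\Hat{\Hat{J}}$, where $J$ is a return interval for $T^+$ (i.e.\ the \emph{regular} subinterval $J'\subset J_*$), and $f^{N}(\Hat{\Hat{J}})$ is an $\varepsilon|A^+|$-neighborhood of $A^+$. The rough interval $J_*$ maps onto all of $\II^+=[c^+,d^+]$, whose right endpoint coincides with that of the extension $\hat A^+=[a^+,d^+]$; thus $\closure(\II^+)\not\subset\interior(\hat A^+)$, the Koebe space on the right is zero, and neither Theorem~\ref{koebethm} nor Proposition~\ref{distorp1} yields a uniform distortion bound on $J_*$. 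In particular, the primary intervals of $\II^+$ close to $d^+$ (those $I'_{k,l}$ with $k$ near $M_0$) lie outside $f^N(\Hat{\Hat{J'}})$, and for them $|h_*^{-1}(I')|/|I'|$ is not controlled by the constant you call $C$. The same issue undermines your geometric decay $R_{k+1}\le\tau R_k$: the quantity $\tau=C|A_R^+|/|\II^+|$ presupposes exactly the uniform distortion on $J_*$ that is unavailable.

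This is probably fixable---only finitely many primary intervals lie outside the $\varepsilon|A^+|$-collar of $A^+$, and their orders are bounded, so one could treat them separately once the decay of $R_k$ is established by an independent argument---but that independent argument is missing, and as written the proof does not close. The paper's route via order-monotonicity sidesteps all of this.
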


\begin{proof}[Proof of Proposition \ref{regsum}] As before we consider the rough intervals $I_{k,l}^+$ and denote $I_{k,a_{2k+2}}:=I_{k+1,0}$. Let $T_{k,l}$ be the gap in $\mathcal{P}_0$ between $I_{k,l}^+$ and $I_{k,l+1}^+$.
Any decomposition $\mathcal{P}$ obtained by refining $\mathcal{P}_0$ induces a partition of $I_{k,l}^+\cup T_{k,l}$.

Let us assume first $1\leq l<a_{k,l}$ and an interval $J\subset I_{k,l}^+\cup T_{k,l}$.
By the monotonicity of the order in decompositions (Definition~\ref{dec}.\ref{dec3}), we get,$$N(J)\leq (q_{2k}+(l+1)q_{2k+1})<2(q_{2k}+lq_{2k+1}).$$
Consequently,
\begin{equation}
\begin{split}
\sum_{\substack{J\in\mathcal{P},\\
J\subset I_{k,l}^+\cup T_{k,l}}}N(J)|J|
&\leq 2(q_{2k}+lq_{2k+1})(|I_{k,l}^+|+|T_{k,l}|)\\
&\leq 2(N(I^+_{k,l})|I_{k,l}^+|+N(T_{k,l})|T_{k,l}|).
\end{split}
\end{equation}

We now consider the case $l=0$.
The interval $J\subset I_{k,l}^+\cup T_{k,l}$ contains the point
$z_k:=f_+^{-q_{2k}}(\Psi^{-1}\circ f_+^{-q_{2k}}(c^+))$.
Note that $T:=(z_k, f_+^{-q_{2k}-q_{2k+1}}(c^+))$ is a gap adjacent to $I_{k,1}$.
By Lemma~\ref{ptes}, it has length smaller than $\eta |I_{k,1}^+|$.
Since $f^{2q_{2k}}(z_k)=c^+$, and since the order of a decomposition is monotone,
for any decomposition, the order on $(f_+^{-q_{2k}}(c^+), z_k)$ is smaller than $2q_{2k}$.
This gives
\begin{equation}
\begin{split}
\sum_{\substack{J\in\mathcal{P},\\
J\subset I_{k,0}^+\cup T_{k,1}}}N(J)|J|
&\leq 2q_{2k}(|I_{k,0}^+|+|T_{k,1}|)+\eta(q_{2k}+q_{2k+1})|I_{k,1}^+|\\
&\leq 2(N(I^+_{k,0})|I_{k,0}^+|+N(T_{k,0})|T_{k,0}|) + N(I^+_{k,1})|I_{k,1}^+|.
\end{split}
\end{equation}
One concludes using Proposition~\ref{primsum}:
$$\sum_{J\in\mathcal{P}}N(J)|J|\leq 3 \sum_{J_0\in\mathcal{P}_0}N(J_0)|J_0|<+\infty.$$
\end{proof}

\subsubsection{Step 3. Summability of the measurable partition $\mathcal{M}^+$}

\label{sc3}

\begin{proposition}\label{mesr} Under \ref{C+}, the measurable partition $\mathcal{M}^+$ of $A_L^+$ is summable.
\end{proposition}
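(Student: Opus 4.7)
The plan is to leverage the iterative refinement construction of $\mathcal{M}^+$ from Proposition~\ref{propreg}: $\mathcal{M}^+$ arises as an increasing limit of measurable partitions $\mathcal{M}_k$ starting from $\mathcal{M}_0 = \mathcal{P}^+$, which is summable by Proposition~\ref{regsum}. Writing $T_k := \sum_{J \in \mathcal{M}_k} N(J)|J|$ and $R_k$ for the partial sum over the regular intervals of $\mathcal{M}_k$ (which persist as $k$ grows), monotone convergence gives $R_k \uparrow \sum_{J \in \mathcal{M}^+} N(J)|J|$, so it suffices to bound $T_k$ uniformly in $k$.

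The central step is a recursive estimate. For a gap $J \in \mathcal{M}_k$ of order $N(J)$, each sub-interval of the next refinement has the form $J'' = J \cap g_+^{-N(J)}(J')$ with $J' \in \mathcal{M}_0$, $N(J'')=N(J)+N(J')$, and the bounded distortion of $g_+^{N(J)}$ on $J$ supplied by Proposition~\ref{distorp1} yields $|J''| \leq K |J||J'|/|g_+^{N(J)}(J)|$ for a uniform constant $K$. Using $g_+^{N(J)}(J) \subset A_L^+$ and $\sum_{J' \in \mathcal{M}_0} N(J')|J'| = T_0$, the additional contribution at level $k+1$ satisfies
$$T_{k+1} - T_k \;\leq\; K\,T_0\,\Lambda_k, \qquad \Lambda_k \;:=\; \sum_{J \text{ gap in } \mathcal{M}_k} \frac{|J|}{|g_+^{N(J)}(J)|}.$$
The same distortion bound applied to the terms of $\Lambda_{k+1}$ factors the sum as $\Lambda_{k+1} \leq K\,\Lambda_k\,\Lambda_0$, whence $\Lambda_k \leq K^k\Lambda_0^{k+1}$. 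Provided $K\Lambda_0 < 1$, the series $\sum_k \Lambda_k$ converges geometrically, and telescoping yields $T_k \leq T_0/(1-K\Lambda_0)$ uniformly in $k$.

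To secure $K\Lambda_0 < 1$ for $M_0$ chosen large, I would compare each gap $J \in \mathcal{M}_0$ to an adjacent regular interval $J'$ (with $N(J)=N(J')$ by Remark~\ref{decr}): bounded distortion on $J \cup J'$ using the extension from Proposition~\ref{propextend} gives $|J|/|g_+^{N(J)}(J)| \leq K|J'|/|A^+|$, and summing over gaps with the observation that each regular interval is adjacent to at most two gaps yields $\Lambda_0 \leq 2K|A_L^+|/|A^+|$. Since $|A^+|$ stays bounded below (close to $|\II^+|$) while $|A_L^+| \to 0$ as $M_0 \to \infty$ (the periodic point $a^+$ tends to $c^+$), enlarging $M_0$ forces $K\Lambda_0$ to be arbitrarily small. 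The symmetric argument applied to the extended lower map $g_-$ yields summability of $\mathcal{M}^-$ in $A_R^-$. The main obstacle is the clean factorization $\Lambda_{k+1}\leq K\Lambda_k\Lambda_0$ across all refinement levels; it hinges on the uniform applicability of Proposition~\ref{distorp1} to every layer's gaps, which follows from the fact that each refined gap continues to admit the extension required for the Koebe bound.
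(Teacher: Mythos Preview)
Your proposal is correct and follows essentially the same route as the paper: both exploit the refinement sequence $(\mathcal{M}_k)$ from the proof of Proposition~\ref{propreg}, establish a recursive estimate for $T_{k+1}-T_k$ via the distortion bound of Proposition~\ref{distorp1}, and close the argument with a geometrically decaying sum. The only difference is bookkeeping: you introduce the auxiliary quantity $\Lambda_k=\sum_{J\text{ gap}}|J|/|g_+^{N(J)}(J)|$ and derive its decay $\Lambda_{k+1}\le K\Lambda_k\Lambda_0$ from scratch, whereas the paper reuses the already-established decay~\eqref{estgap} of the total gap length $G_k=\sum_{J\text{ gap}}|J|$ and writes $T_{k+1}\le T_k+\eta'G_k$ with $\eta'=(1+D_1)T_0/|A^+|$.
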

\begin{proof}
This is a consequence of Proposition~\ref{regsum} and of the proof of Proposition~\ref{propreg}: we consider again the sequence $(\mathcal{M}_k)$. From Proposition~\ref{regsum}, $\mathcal{M}_0$ is summable. Let $J$ be a gap of $\mathcal{M}_k$ for some $k\in\mathbb{N}$. From Lemma~\ref{distorp1}, we get,
$$\sum_{\substack{J''\in\mathcal{M}_{k+1},\\J''\subset J}}N(J'')|J''|\leq\left( N(J)+\frac{1+D_1}{|A^+|}\sum_{J'\in\mathcal{M}_0}N(J')|J'|\right)|J|$$and $\eta':=\frac{1+D_1}{|A^+|}\sum_{J'\in\mathcal{M}_0}N(J')|J'|$ is arbitrarily small by taking $M_0$ large enough.

One deduces using inequality~(\ref{estgap}),

\begin{equation*}
\begin{split}
\sum_{J''\in\mathcal{M}_{k+1}}N(J'')|J''|&\leq\sum_{J\in\mathcal{M}_k} N(J)|J|+\eta'\sum_{J\text{ gap of }\mathcal{M}_{k}}|J|,\\
&\leq \left(1+\frac{\eta'}{1-\eta(1+ D_1)}\right)\sum_{J\in\mathcal{M}_0} N(J)|J|.
\end{split}
\end{equation*}
We proved that, being $\mathcal{M}_0$ summable, the partition $\mathcal{M}_k$ is summable for any $k$. By the construction of $\mathcal{M}_k$, see proof of Proposition \ref{propreg}, any elent of $\mathcal{M}^+$ can be approximated by elements of $\mathcal{M}_k$. As consequence the partition $\mathcal{M}^+$ of $A_L^+$ is also summable.
\end{proof}

\subsection{Proof of Propositions~\ref{fonda} and~\ref{fondb}} This subsection will be devoted to the proof of Propositions~\ref{fonda} and~\ref{fondb}. Let us start fixing some more definition.

\begin{definition}\label{regf} An interval $J\subset \mathbb{T}^1\setminus\II$ is a \textbf{regular interval for $f$} if there exists an integer $N\geq 1$ such that
\begin{enumerate}

\item\label{regf1} $\forall 0\leq n\leq N-1, \; f^n(J)\cap\II=\emptyset$;

\item\label{regf2} there exists an interval $\hat J\supset J$ (called the extension of $J$) such that $f^N$ in restriction to $\hat J$ is an homeomorphism and one of these cases occurs:

\begin{itemize}
\item $f^N(J)=A^+$ and $f^N(\hat J)=\hat A^+$,
\item $f^N(J)=A^-$ and $f^N(\hat J)=\hat A^-$,
\item $f^N(J)=A$ and $f^N(\hat J)=\hat A$.
\end{itemize}
\end{enumerate}

The integer $N$ is uniquely defined, called the \textbf{order} of $J$ and denoted by $N(J)$.
\end{definition}

\subsubsection{}\label{pr} We now introduce a family $\mathcal{R}$ of regular intervals of $f$ such that:
\begin{enumerate}
\item $\mathcal{R}$ is a measurable partition of $\mathbb{T}^1\setminus\II$;
\item if conditions \ref{C-} and \ref{C+} are satisfied, the order $N:\mathcal{R}\rightarrow\mathbb{N}$ is summable.
\end{enumerate}
It coincides on $A^+_L$ (resp. $A^-_R$) with the restriction of the partition $\mathcal{M}^+$ (resp. $\mathcal{M}^-$), as in Sections~\ref{regp} and~\ref{regm} and on $\mathbb{T}^1\setminus A$ with the partition $\mathcal{S}$ (see Section~\ref{frma}). From Proposition~\ref{frmap} and~\ref{propreg}, we get a measurable partition $\mathcal{R}$ of $\mathbb{T}^1\setminus \II$.

The second property is a direct consequence of Propositions~\ref{frmap} and~\ref{mesr}.

\subsubsection{The family $\mathcal{N}$ is a measurable partition} The main ingredient in the proof of Propositions~\ref{fonda} and~\ref{fondb} is the following:

\begin{proposition}\label{fondap}
\begin{enumerate}
\item\label{fondap1} The family $\mathcal{N}$ is a measurable partition of $\mathbb{T}^1$.
\item\label{fondap2} The order $N^0:\mathcal{N}\rightarrow \mathbb{N}$ is summable if \ref{C-} and \ref{C+} are satisfied.
\end{enumerate}
\end{proposition}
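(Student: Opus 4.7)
The plan is to iteratively refine the measurable partition $\mathcal{R}$ of $\mathbb{T}^1\setminus\II$ constructed in Section~\ref{pr}. For each $J\in\mathcal{R}$, the iterate $f^{N(J)}$ sends $J$ homeomorphically onto one of $A$, $A^+$, or $A^-$, with uniformly bounded distortion on the Koebe extension $\hat J$ by Proposition~\ref{distorp1}. Because $\II$ occupies a fixed positive fraction of each of $A$, $A^+$, $A^-$, bounded distortion yields a subinterval $J^\ast\subset J$ with $f^{N(J)}(J^\ast)=\II$ and $|J^\ast|\geq c_0|J|$ for a uniform $c_0>0$. Any point of $J^\ast$ has first entry time exactly $N(J)$, so $J^\ast$ is the element of $\mathcal{N}$ containing those points. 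The complement $J\setminus J^\ast$ is sent by $f^{N(J)}$ into $A_L^+\cup A_R^-\cup(\mathbb{T}^1\setminus A)$, where $\mathcal{R}$ is already defined; pulling back $\mathcal{R}$ through $f^{N(J)}$ refines $J\setminus J^\ast$ into intervals on which the next induced return is defined with a constant larger total order, and the procedure is then repeated.

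Iterating produces a nested sequence of partitions $(\mathcal{N}_k)_{k\geq 0}$ of $\mathbb{T}^1\setminus\II$, together with residual sets $U_k\subset\mathbb{T}^1\setminus\II$ of points not yet assigned. By bounded distortion, a uniform fraction $c_1>0$ of each $U_{k-1}$ is removed at step $k$, yielding $\lambda(U_k)\leq (1-c_1)^k\lambda(\mathbb{T}^1\setminus\II)\to 0$. The elements of $\mathcal{N}$ contained in $\II$ are then obtained by pulling back through $f$ this partition of $\mathbb{T}^1\setminus\II$ (together with intervals on which $f$ already maps into $\II$), hence also form a measurable partition. This establishes \ref{fondap1}.

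For \ref{fondap2}, I bound $\sum_{J\in\mathcal{N}}N^0(J)|J|=\int N^0\,d\lambda$ by summing contributions step by step. Writing $N^0(x)=\sum_{k\geq 0}N_k(x)\,\mathbf{1}_{U_{k-1}}(x)$ where $N_k(x)$ is the order added at step $k$, bounded distortion (Proposition~\ref{distorp1} and the estimates for composed induced maps proved in Section~\ref{distprop}) bounds $\int N_k\,\mathbf{1}_{U_{k-1}}\,d\lambda$ by a uniform constant times $\lambda(U_{k-1})\cdot S$, where $S:=\sum_{J\in\mathcal{R}}N(J)|J|$ is finite under \ref{C-} and \ref{C+} by the summability statement in Section~\ref{pr}. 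The geometric decay of $\lambda(U_{k-1})$ then makes the series $\sum_k\lambda(U_{k-1})\cdot S$ convergent, proving summability.

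The principal technical obstacle is maintaining uniform distortion constants along the arbitrarily long compositions arising in the refinement. This is precisely what Proposition~\ref{distorp2} is designed for; verifying its combinatorial hypotheses (each next piece avoids the interior of $\II$, and $T^\pm$ is followed by pieces in $A_L^+\cup A_R^-$) is automatic in our setting, since by construction the residuals $J\setminus J^\ast$ are sent outside $\II$ and are then subdivided according to whether the image falls in $A_L^+$, $A_R^-$, or $\mathbb{T}^1\setminus A$, which is exactly the combinatorial pattern the proposition requires.
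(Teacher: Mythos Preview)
Your proposal is correct and follows essentially the same approach as the paper's own proof: start from the partition $\mathcal{R}$ on $\mathbb{T}^1\setminus\II$, iteratively refine it by pulling back $\mathcal{R}\cup\{\II\}$ through the induced maps (the paper calls the resulting sequence $(\mathcal{Q}_k)$), use Proposition~\ref{distorp2} to get uniform distortion and hence exponential decay of the residual measure together with control of the summed orders, and finally extend to $\II$ by pulling back through $f$. The paper organizes the bookkeeping slightly differently (separating the two inequalities~\eqref{raffin3} and~\eqref{raffin4} and passing through Lemmas~\ref{sompr1} and~\ref{sompr}), but the substance is identical.
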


 The strategy of the proof is to refine the measurable partition $\mathcal{R}$ in order to prove that $\mathcal{N}$ is a measurable partition of $\mathbb{T}^1\setminus \II$. For the second claim,  the summability conditions proven in the previous Section for $\mathcal{R}$ will be used. The result will be then extended to $\II$ by pulling back the measurable partition of $\mathbb{T}^1\setminus \II$ by the map $f$ which is a local diffeomorphism on $\interior(\II)$.

We start with introducing refining partitions and we continue proving some thecnical lemmas. 

\subsubsection{}We got from Section~\ref{pr} a measurable partition $\mathcal{Q}_0=\mathcal{R}$ of $\mathbb{T}^1\setminus\II$. By refining inductively $\mathcal{Q}_0$, we build a sequence $(\mathcal{Q}_k)$ of measurable partitions of $\mathbb{T}^1\setminus \II$ which satisfies:

\begin{description}
\item[(P)] For $k\in\mathbb{N}$, any $J\in\mathcal{Q}_k$ either belongs to $\mathcal{N}$ or is a regular interval of $f$.
\end{description}

Let us fix $k\in\mathbb{N}$ and consider $J\in\mathcal{Q}_k$.

\begin{itemize}

\item In the case $J\in\mathcal{N}$, we set $J\in\mathcal{Q}_{k+1}$.

\item In the case $J\not\in\mathcal{N}$, we get $f^{N(J)}(J)\in\{A^-,A^+,A\}$. One refines $J$ by introducing for any $J'\in\mathcal{R}\cup\{\II\}$,$$J''=f^{-N(J)}(J')\cap J.$$
If $J''$ is not empty, we set $J''\in\mathcal{Q}_{k+1}$.
\end{itemize}
Note that since $f^{N(J)}$ is a diffeomorphism and $\mathcal{R}\cup\{\II\}$ a measurable partition of the circle, $\mathcal{Q}_{k+1}$ is a measurable partition of $\mathbb{T}^1\setminus\II$.

\begin{lemma}
 $\mathcal{Q}_{k+1}$ satisfies (P). 
\end{lemma}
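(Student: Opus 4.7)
The plan is to argue by induction-style case analysis, reading off from the construction of $\mathcal{Q}_{k+1}$. Fix $J\in\mathcal{Q}_{k+1}$. By the definition of the refinement, either $J$ already belonged to $\mathcal{N}\cap\mathcal{Q}_k$ (and is retained unchanged), or else $J = J_0\cap f^{-N_0}(J')$ for some $J_0\in\mathcal{Q}_k\setminus\mathcal{N}$ with $N_0:=N(J_0)$ and some $J'\in\mathcal{R}\cup\{\II\}$. In the first case $J\in\mathcal{N}$ and property (P) holds trivially. In the second case, the inductive hypothesis (P) applied to $\mathcal{Q}_k$ tells us that $J_0$ is a regular interval of $f$, with extension $\hat J_0$ such that $f^{N_0}(J_0)\in\{A^+,A^-,A\}$ and $f^{N_0}(\hat J_0)\in\{\hat A^+,\hat A^-,\hat A\}$.

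First I would treat the subcase $J'=\II$. Since $f^{N_0}|_{\hat J_0}$ is a homeomorphism onto some $\hat A^?$ whose interior contains $\II$, the interval $J = \hat J_0\cap f^{-N_0}(\II)$ is sent homeomorphically onto $\II$ by $f^{N_0}$, and for $0\le n < N_0$ we have $f^n(J)\subset f^n(J_0)$, which is disjoint from $\II$ by regularity of $J_0$. Consequently $N^0\equiv N_0$ on $J$. Maximality of $J$ among intervals on which $f^{N_0}$ is monotone and lands in $\II$ follows from the extension property applied through Proposition~\ref{inducp}, so $J\in\mathcal{N}$.

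Next I would treat the subcase $J'\in\mathcal{R}$. By definition of $\mathcal{R}$, $J'$ is itself a regular interval of $f$ with some order $N':=N(J')$ and extension $\hat J'$ with $f^{N'}(\hat J')\in\{\hat A^+,\hat A^-,\hat A\}$. I claim $J$ is a regular interval of $f$ of order $N_0+N'$. Condition (\ref{regf1}) splits into two ranges: for $n<N_0$ we use that $f^n(J)\subset f^n(J_0)$ is disjoint from $\II$, and for $N_0\le n<N_0+N'$ we write $f^n(J)\subset f^{n-N_0}(J')$ and apply regularity of $J'$. For condition (\ref{regf2}) I would take as extension $\hat J := \hat J_0\cap f^{-N_0}(\hat J')$, so that $f^{N_0+N'}|_{\hat J}$ is a composition of two homeomorphisms onto the appropriate $\hat A^?$.

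The main obstacle is the compatibility of extensions in this last step: one must verify that $\hat J'\subset f^{N_0}(\hat J_0)$, so that pulling back $\hat J'$ via $f^{N_0}|_{\hat J_0}$ actually produces an interval whose image under $f^{N_0+N'}$ equals exactly one of $\hat A^+,\hat A^-,\hat A$, as required by Definition~\ref{regf}. This reduces to a combinatorial check based on the cyclic order $d^-<b^-<a^+<c^+<c^-<a^-<b^+<d^+$: one enumerates the possible values of $f^{N_0}(J_0)\in\{A^+,A^-,A\}$ and of the set $\{A^+_L,A^-_R,\mathbb{T}^1\setminus A\}$ containing $J'$, ruling out impossible configurations (e.g.\ $f^{N_0}(J_0)=A^+$ with $J'\subset A^+_L$, since $A^+\cap A^+_L=\emptyset$) and in the remaining cases choosing the target $\hat A^?$ so that the extensions nest properly (using that $M_0$ is chosen large, so $\hat A^+,\hat A^-,\hat A$ may be taken small relative to the ambient intervals). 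This bookkeeping is the only real work; once it is dispatched, $\hat J$ has the required image and $J$ is indeed a regular interval of $f$, establishing (P) for $\mathcal{Q}_{k+1}$.
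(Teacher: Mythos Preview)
Your overall architecture matches the paper's: split according to whether $J'=\II$ or $J'\in\mathcal{R}$, conclude $J''\in\mathcal{N}$ in the first case, and show $J''$ is a regular interval of $f$ in the second. The divergence is in how you produce the extension $\hat J''$ in the second case.

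You propose $\hat J'':=\hat J_0\cap f^{-N_0}(\hat J')$, which forces you to verify $\hat J'\subset f^{N_0}(\hat J_0)$. You call this ``bookkeeping,'' but it is more delicate than you indicate. The target $\hat A^{?}$ is \emph{not} yours to choose: by Definition~\ref{regf} the image $f^{N'}(J')\in\{A^+,A^-,A\}$ dictates which of $\hat A^+,\hat A^-,\hat A$ the extension must hit, so the phrase ``choosing the target $\hat A^{?}$ so that the extensions nest properly'' has no content. What you actually need is that the dynamically determined interval $\hat J'$ stays inside $f^{N_0}(\hat J_0)$; this is a statement about the \emph{location and size} of $\hat J'$, not about the sizes of the fixed intervals $\hat A^{\pm},\hat A$, and it does not follow from a purely combinatorial enumeration of the cyclic order. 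For instance, when $f^{N_0}(J_0)=A^+$ and $J'\in\mathcal{M}^-$ sits in $A^-_R\subset A^+$, you must argue that the extension $\hat J'$ (which stretches beyond $J'$ on both sides) does not escape $\hat A^+$; this would require a distortion estimate, not just combinatorics.

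The paper avoids this entirely. It observes that $J''$ contains a subinterval $\check J$ mapped onto $\II$ by $f^{N_0+N'}$ with all earlier iterates disjoint from $\II$, and then applies Proposition~\ref{inducp} directly with $I=\II$ and $\hat I=f^{N'}(\hat J')\in\{\hat A^+,\hat A^-,\hat A\}$. That proposition manufactures the extension $\hat J''\supset\check J$ with $f^{N_0+N'}(\hat J'')=\hat I$ for free; one then checks $J''\subset\hat J''$ because $f^{N_0+N'}(J'')=f^{N'}(J')\subset\hat I$. No nesting of $\hat J'$ inside $f^{N_0}(\hat J_0)$ is ever needed. So the paper's route is genuinely shorter: it replaces your unfinished case analysis by a single invocation of Proposition~\ref{inducp}.
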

\begin{proof}
If $J'=\II$, for any $x\in J''$ we get $N^0(x)=N(J)$ so that $J''\in\mathcal{N}$. If $J'\in\mathcal{R}$, the interval $J'$ is a regular interval and there exists an extension $\hat J'$ which satisfies Definition~\ref{regf}. Let us assume $f^{N(J)}(J)=A^+$ (the cases $f^{N(J)}(J)=A^-$ or $=A$ are similar).

If $J''\not=\emptyset$ then $J'\subset f^{N(J)}(J)=A^+$. In particular, $J''$ contains an interval $\check J$ which is mapped on $\II$
by $f^{N(J)+N(J')}$ and whose $N(J)+N(J')-1$ first iterates are disjoint from $\II$.
One can thus apply the Proposition~\ref{inducp} to $I:=\II$ and to $\hat I:=f^{N(J')}(\hat J')$
(which is one of the intervals $\hat A^+$, $\hat A^-$ or $\hat A$).
Hence, there exists $\hat J''$ which contains $\check J$ and which is mapped homeomorphically on $f^{N(J')}(\hat J')$ by $f^{N(J)+N(J')}$.
In particular, $\hat J''$ contains $J''$, which is mapped on $f^{N(J)}(\hat J)$.
One deduces that $J''$ is regular for $f$ and that $\mathcal{Q}_{k+1}$ satisfies the Property (P).
\end{proof}
\begin{lemma}
There exist some new constants $C>0$ and $\kappa>1$ such that
\begin{equation}
\label{raffin3}
\forall k\in\mathbb{N},\;
\sum_{J''\in\mathcal{Q}_{k+1}\setminus\mathcal{Q}_k}N(J'')|J''|<C.\sum_{J\in\mathcal{Q}_{k}\setminus\mathcal{N}}N(J)|J|
\end{equation}

\begin{equation}
\label{raffin4}
\forall k\in\mathbb{N},\;
\forall J\in\mathcal{Q}_k\setminus\mathcal{N},\;
\sum_{\substack{J''\in\mathcal{Q}_{k+1}\setminus\mathcal{N},\\
J''\subset J}}N(J'')|J''|\leq \kappa^{-2}N(J)|J|+C|J|.
\end{equation}
\end{lemma}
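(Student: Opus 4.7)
The plan is to exploit the regular-interval structure of every $J\in\mathcal{Q}_k\setminus\mathcal{N}$: by construction $f^{N(J)}$ maps $\hat J$ diffeomorphically onto one of $\hat A^+$, $\hat A^-$, $\hat A$, and Proposition~\ref{distorp1} gives a uniform bound $K_d\geq 1$ for its distortion on $\hat J$. The children of $J$ in $\mathcal{Q}_{k+1}$ are exactly the nonempty sets $J''=J\cap f^{-N(J)}(J')$ with $J'\in\mathcal{R}\cup\{\II\}$. I would split them into two families: the single $\mathcal{N}$-child coming from $J'=\II$ (for which $N(J'')=N(J)$) and the regular children coming from $J'\in\mathcal{R}$ (for which $N(J'')=N(J)+N(J')$).

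Bounded distortion yields $|J''|/|J|\leq K_d|J'|/|f^{N(J)}(J)|$ for every child. Summing,
\[
\sum_{J''\subset J}N(J'')|J''|=N(J)|J|+\sum_{\substack{J''\subset J\\ J''\notin\mathcal{N}}}N(J')|J''|\leq N(J)|J|+\frac{K_d|J|}{|f^{N(J)}(J)|}\sum_{J'\in\mathcal{R}}N(J')|J'|.
\]
The rightmost sum is finite under conditions \ref{C-} and \ref{C+} by Proposition~\ref{mesr} (and its analogue for $\mathcal{M}^-$) together with Proposition~\ref{frmap} for $\mathcal{S}$. Since $|f^{N(J)}(J)|$ is one of the fixed lengths $|A^+|$, $|A^-|$, $|A|$, I obtain $\sum_{J''\subset J}N(J'')|J''|\leq N(J)|J|+C_1|J|$ for a uniform $C_1$. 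Summing over $J\in\mathcal{Q}_k\setminus\mathcal{N}$ and using $|J|\leq N(J)|J|$ (since $N(J)\geq 1$) yields~(\ref{raffin3}).

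For~(\ref{raffin4}) the key additional input is that $\II$ is contained in each of $A^+$, $A^-$ and $A$, so the $\mathcal{N}$-child $J\cap f^{-N(J)}(\II)$ carries a definite fraction of $|J|$. Bounded distortion yields
\[
\sum_{\substack{J''\subset J\\ J''\notin\mathcal{N}}}|J''|\leq\left(1-\frac{1}{K_d}\cdot\frac{|\II|}{\max(|A^+|,|A^-|,|A|)}\right)|J|=:\theta|J|,
\]
with $\theta<1$ uniform in $J$ and $k$. Replacing the factor $1$ by $\theta$ in the previous computation for the regular-children sum gives $\sum_{J''\notin\mathcal{N},\,J''\subset J}N(J'')|J''|\leq\theta N(J)|J|+C_2|J|$, and any choice of $\kappa>1$ with $\kappa^{-2}\geq\theta$ establishes~(\ref{raffin4}).

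The main obstacle I anticipate is bookkeeping: verifying that the distortion constant $K_d$ is genuinely uniform at every level of the refinement (which reduces to checking that Proposition~\ref{distorp1} applies, via the Koebe extensions $\hat J\to\hat A^\pm$ or $\hat A$ built into Definition~\ref{regf}), and confirming that no boundary-straddling interval $J'\in\mathcal{R}$ breaks the length comparison (at most a bounded number of such boundary intervals arise per child-family, and they can be absorbed into the uniform constants). The geometric ratio $\theta<1$ is immediate from the fixed constants $|\II|,|A^+|,|A^-|,|A|$ chosen once and for all in Section~\ref{preliminaries}.
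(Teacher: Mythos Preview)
Your argument is essentially identical to the paper's: both split the children of $J\in\mathcal Q_k\setminus\mathcal N$ into the single $\II$-child and the $\mathcal R$-children, use summability of $\mathcal R$ to control $\sum N(J')|J''|$, and use that the $\II$-child occupies a definite proportion of $|J|$ to extract the contraction factor $\theta=\kappa^{-2}<1$.

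The one point where you diverge from the paper is precisely the ``main obstacle'' you flag. Your proposed resolution --- that the Koebe extension $\hat J\to\hat A^\pm$ or $\hat A$ from Definition~\ref{regf} lets you invoke Proposition~\ref{distorp1} directly --- does not quite work. Proposition~\ref{distorp1} is stated for \emph{return} intervals of a single induced map $T\in\{T^0,T^-,T^+,T_A\}$, and its proof uses that the iterates $J,f(J),\dots,f^{N-1}(J)$ are pairwise disjoint (so that $\sum_{n<N}|f^n(J)|\le 1$ feeds into Koebe). For $k\ge 1$ an interval $J\in\mathcal Q_k\setminus\mathcal N$ is obtained by \emph{composition}: its order $N(J)$ is a sum of several return times, and the orbit of $J$ may revisit $A^+\setminus\II$ (or $A^-\setminus\II$, or $\mathbb T^1\setminus A$) many times before time $N(J)$, so disjointness of iterates fails and the Koebe bound in Proposition~\ref{distorp1} is not uniform in $k$. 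The paper handles exactly this by citing Proposition~\ref{distorp2} instead, which is tailored to compositions $T_n\circ\cdots\circ T_0$ and yields a distortion bound $D_2$ independent of the depth. With that substitution your argument goes through verbatim and matches the paper's proof.
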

\begin{proof}
 By Proposition~\ref{distorp2}, there exists a constant $D_2>0$ that bounds the distortion of $f^{N(J)}$ on any interval $J\in\mathcal{Q}_k\setminus\mathcal{N}$, for any $k\in\mathbb{N}$.

One deduces for some constant $C>0$,
\begin{equation}
\label{estim1}
\begin{split}
\sum_{J'\in\mathcal{R}}N(J')|f^{-N(J)}(J')\cap J|&\leq\\
D_2&\left(\sum_{J'\in\mathcal{R}}N(J')|J'|\right)\frac{|J|}{\inf(|A^+|,|A^-|,|A|)}\leq C|J|,
\end{split}
\end{equation}
Proposition~\ref{distorp2} implies that $J$ contains a large interval $J''\in\mathcal{Q}_{k+1}\cap\mathcal{N}$:$$|f^{-N(J)}(\II)\cap J|>D_2^{-1}\frac{|\II|}{\sup(|A^+|,|A^-|,|A|)}|J|.$$

Consequently for some constant $\kappa^{-2}:=1-D_2^{-1}\frac{|\II|}{\sup(|A^+|,|A^-|,|A|)}$,
\begin{equation}\label{estim2}
\sum_{J'\in\mathcal{R}}|f^{-N(J)}(J')\cap J|\leq\kappa^{-2}|J|.
\end{equation}

From both estimates~(\ref{estim1}) and~(\ref{estim2}), we get
\begin{equation}\label{lstes}
\begin{split}
\sum_{J'\in\mathcal{R}}
(N(J)&+N(J'))|f^{-N(J)}(J')\cap J|\leq\\
&\leq N(J)\sum_{J'\in\mathcal{R}}|f^{-N(J)}(J')\cap J|+\sum_{J'\in\mathcal{R}}N(J')|f^{-N(J)}(J')\cap J|,\\
&\leq \kappa^{-2} N(J)|J|+C|J|.
\end{split}
\end{equation}

This gives~(\ref{raffin4}). We get also from~(\ref{lstes}),

\begin{equation*}
\begin{split}
N(J)|f^{-N(J)}(\II)\cap J|&+\sum_{J'\in\mathcal{R}}(N(J)+N(J'))|f^{-N(J)}(J')\cap J|\\
&\leq (1+C+\kappa^{-2})N(J)|J|.
\end{split}
\end{equation*}

By summing over $J$, one gets~(\ref{raffin3}):$$\sum_{J''\in\mathcal{Q}_{k+1}\setminus\mathcal{Q}_{k}}N(J'')|J''|\leq (1+C+\kappa^{-2})\sum_{J\in\mathcal{Q}_k\setminus \mathcal{N}}N(J)|J|.$$
\end{proof}

\begin{lemma}\label{sompr1}
Let us consider$$\mathcal{N}'=\bigcup_{l\in\mathbb{N}}\bigcap_{k\geq l}\mathcal{Q}_k.$$
Then $\mathcal{N}'$ is a measurable partition and $\mathcal{N}'=\mathcal{N}\setminus\mathcal{N}^0$.
\end{lemma}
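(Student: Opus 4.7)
My plan is to prove the two set-theoretic inclusions $\mathcal{N}' \subset \mathcal{N}\setminus\mathcal{N}^0$ and $\mathcal{N}\setminus\mathcal{N}^0 \subset \mathcal{N}'$, and then establish the measurable partition property using the geometric decay already embedded in the proof of~\eqref{raffin4}.

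For the easy inclusion $\mathcal{N}'\subset\mathcal{N}\setminus\mathcal{N}^0$, I would simply invoke the refinement rule defining the sequence $(\mathcal{Q}_k)$: an interval $J\in\mathcal{Q}_k$ is passed unchanged to $\mathcal{Q}_{k+1}$ exactly when $J\in\mathcal{N}$, while every $J\in\mathcal{Q}_k\setminus\mathcal{N}$ is a regular interval for $f$ and is properly subdivided. Consequently, if $J$ stabilizes (i.e.\ $J\in\bigcap_{k\ge l}\mathcal{Q}_k$), then $J\in\mathcal{N}$. Since each $\mathcal{Q}_k$ is a partition of $\mathbb{T}^1\setminus\II$, any such $J$ is disjoint from $\II$, and therefore does not belong to $\mathcal{N}^0$.

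For the reverse inclusion, I would fix $J\in\mathcal{N}\setminus\mathcal{N}^0$, pick $x$ in its interior, set $N := N^0(x)$, and track the element $J_k\in\mathcal{Q}_k$ containing $x$. The key observation is that the orders $N(J_k)$ form a bounded, strictly increasing sequence on the refinement steps: whenever $J_k\notin\mathcal{N}$, the interval $J_k$ is regular for $f$ and its first $N(J_k)-1$ iterates avoid $\II$, forcing $N(J_k)\le N$; and the next refinement produces $J_{k+1}\subset J_k$ with $N(J_{k+1})=N(J_k)+N(J')$ for some $J'\in\mathcal{R}$, strictly larger. After at most $N$ refinements one reaches $N(J_k)=N$, so $f^{N(J_k)}(x)\in\II$ and the next refinement selects $J'=\II$, producing $J_{k+1}=f^{-N}(\II)\cap J_k\in\mathcal{N}$. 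Since $J_{k+1}$ and $J$ both contain $x$ and share the same extension $\hat J_k=\hat J$ (using $\hat A=\hat\II$ from Section~\ref{def-hat-II}), both are the unique preimage of $\II$ under the homeomorphism $f^N\colon\hat J_k\to\hat\II$, so $J_{k+1}=J$. Thus $J$ lies in $\mathcal{Q}_{k'}$ for all $k'>k$, i.e.\ $J\in\mathcal{N}'$.

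Finally, for the measurable-partition statement, I would reuse the contraction estimate already established at~\eqref{estim2}: for each $J\in\mathcal{Q}_k\setminus\mathcal{N}$, the pieces of the refinement lying outside $\mathcal{N}$ correspond to $J'\in\mathcal{R}$, and their total length is at most $\kappa^{-2}|J|$. Summing over $J\in\mathcal{Q}_k\setminus\mathcal{N}$ yields the geometric decay
\[
\sum_{J''\in\mathcal{Q}_{k+1}\setminus\mathcal{N}}|J''|\ \le\ \kappa^{-2}\sum_{J\in\mathcal{Q}_k\setminus\mathcal{N}}|J|,
\]
hence $\lambda\bigl((\mathbb{T}^1\setminus\II)\setminus\bigcup_{J\in\mathcal{N}'}J\bigr)\le \lim_k\sum_{J\in\mathcal{Q}_k\setminus\mathcal{N}}|J|=0$, giving that $\mathcal{N}'$ is a measurable partition of $\mathbb{T}^1\setminus\II$. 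The only delicate point is the identification $J_{k+1}=J$ at the terminating refinement step; once one spots that $\hat A=\hat\II$ makes the two candidate extensions coincide, the rest of the argument is bookkeeping.
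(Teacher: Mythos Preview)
Your argument for the easy inclusion and for the measurable-partition property is exactly the paper's: the paper records the contraction~\eqref{raffin1} (your displayed inequality with $\kappa^{-2}$ replaced by $\kappa^{-1}$), sums it to get~\eqref{raffin2}, and concludes $\lambda\bigl((\mathbb{T}^1\setminus\II)\setminus\bigcup_{J\in\mathcal{N}'}J\bigr)=0$.

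Where you differ is the reverse inclusion $\mathcal{N}\setminus\mathcal{N}^0\subset\mathcal{N}'$. The paper does not argue this directly at all: once $\mathcal{N}'\subset\mathcal{N}\setminus\mathcal{N}^0$ and $\mathcal{N}'$ is a measurable partition of $\mathbb{T}^1\setminus\II$, equality is immediate from disjointness of the family~$\mathcal{N}$, since any $J\in(\mathcal{N}\setminus\mathcal{N}^0)\setminus\mathcal{N}'$ would be disjoint from every element of $\mathcal{N}'$ and hence have $\interior(J)$ contained in a null set. Your constructive tracking argument is a legitimate alternative and gives slightly more information (an explicit bound on the stabilization time), but the identification step at the end is not quite right as written. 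You invoke $\hat A=\hat\II$ to conclude $\hat J_k=\hat J$, but the terminal regular interval $J_k$ may have $f^{N(J_k)}(J_k)=A^+$ or $A^-$, in which case $f^{N}(\hat J_k)=\hat A^\pm\neq\hat\II$ and the extension argument breaks down. The fix is the one-line observation you skipped: $J_{k+1}\in\mathcal{N}$ and $J\in\mathcal{N}$ both contain $x$, and the intervals of $\mathcal{N}$ are pairwise disjoint (Section~\ref{frmi}), so $J_{k+1}=J$. With that replacement (and the remark that a generic $x\in\interior(J)$ avoids the finitely many null sets where the refinement is undefined), your proof is complete.
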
 
\begin{proof}
Observe that there exists some constant $\kappa>1$ such that
\begin{equation}
\label{raffin1}
\forall k\in\mathbb{N},\;\forall J\in\mathcal{Q}_k\setminus\mathcal{N},\;
\sum_{\substack{J''\in\mathcal{Q}_{k+1}\setminus \mathcal{N},\\
J''\subset J}}|J''|<\kappa^{-1}|J|.
\end{equation}
The proof of~(\ref{raffin1}) is very similar but easier than the proof of~(\ref{raffin4}). 
One deduces from~(\ref{raffin1}) that for some $C>0$,
\begin{equation}
\label{raffin2}
\forall k\in\mathbb{N},\;
\sum_{J''\in\mathcal{Q}_{k+1}\setminus\mathcal{Q}_k}|J''|<C\kappa^{-k}.
\end{equation}
By construction, $\mathcal{N}'\subset\mathcal{N}\setminus\mathcal{N}^0$ and $\mathcal{Q}_k\setminus\mathcal{N}'=\mathcal{Q}_k\setminus\mathcal{N}$ so that by~(\ref{raffin2}),$$\lambda(\mathbb{T}^1\setminus(\II\cup\bigcup_{J\in\mathcal{N}'}J))=0.$$
\end{proof}

\begin{lemma}\label{sompr} 
 $N^0:\mathcal{N}'\rightarrow \mathbb{N}$ is summable.
\end{lemma}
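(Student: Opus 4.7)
The plan is to express $\mathcal{N}'$ as a disjoint union indexed by the refinement level at which each interval stabilizes, and then estimate the contribution of each level using the three recursive inequalities already established. Explicitly, every $J\in\mathcal{N}'$ belongs to some $\mathcal{Q}_k$ for all $k$ large enough, so
\[
\mathcal{N}' \;=\; (\mathcal{Q}_0\cap\mathcal{N}) \;\sqcup\; \bigsqcup_{k\geq 0}\bigl((\mathcal{Q}_{k+1}\setminus\mathcal{Q}_k)\cap\mathcal{N}\bigr),
\]
and accordingly
\[
\sum_{J\in\mathcal{N}'}N^0(J)|J| \;=\; \sum_{J\in\mathcal{Q}_0\cap\mathcal{N}}N^0(J)|J| \;+\; \sum_{k\geq 0}\sum_{J\in(\mathcal{Q}_{k+1}\setminus\mathcal{Q}_k)\cap\mathcal{N}}N^0(J)|J|.
\]
The first term is controlled because $\mathcal{Q}_0=\mathcal{R}$, which is summable by Section~\ref{pr} under \ref{C-}, \ref{C+}. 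For the remaining sums we use estimate~(\ref{raffin3}), which bounds each of them by $C\cdot S_k$, where
\[
S_k:=\sum_{J\in\mathcal{Q}_k\setminus\mathcal{N}}N(J)|J|.
\]

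The key point will then be to show that $S_k$ decays geometrically. Introduce also the auxiliary quantity $T_k:=\sum_{J\in\mathcal{Q}_k\setminus\mathcal{N}}|J|$. Applying~(\ref{raffin1}) and summing over $J\in\mathcal{Q}_k\setminus\mathcal{N}$ gives $T_{k+1}\leq \kappa^{-1}T_k$, hence $T_k\leq \kappa^{-k}T_0\leq \kappa^{-k}$. Summing~(\ref{raffin4}) over the same $J$'s yields the recursion
\[
S_{k+1}\;\leq\;\kappa^{-2}\,S_k \;+\; C\,T_k \;\leq\;\kappa^{-2}\,S_k+C\,\kappa^{-k}.
\]
Since $S_0$ is finite (it is a partial sum of the summable $N:\mathcal{R}\to\mathbb{N}$), an elementary induction gives $S_k\leq C'\,\kappa^{-k}$ for some constant $C'>0$ depending only on $S_0$, $C$ and $\kappa$.

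Plugging this back into the decomposition yields
\[
\sum_{J\in\mathcal{N}'}N^0(J)|J| \;\leq\; \sum_{J\in\mathcal{R}\cap\mathcal{N}}N(J)|J| \;+\; C\sum_{k\geq 0}S_k \;<\;+\infty,
\]
which is the desired summability. There is no real obstacle beyond bookkeeping: the three inequalities~(\ref{raffin3}), (\ref{raffin4}) and~(\ref{raffin1}) are designed exactly so that the two coupled quantities $S_k,T_k$ satisfy a triangular linear system with contracting diagonal, from which geometric decay of $S_k$ follows immediately. The only mild subtlety is verifying that the $N$ appearing in $S_k$ (the regular order for $f$) coincides, upon further refinement inside $\mathcal{N}$, with the first-return time $N^0$; this is guaranteed by construction of the refinement step, which pulls the partition $\mathcal{R}\cup\{\II\}$ back under a diffeomorphism $f^{N(J)}$, so that for any interval $J''=f^{-N(J)}(\II)\cap J$ declared to lie in $\mathcal{N}$ one has $N^0\equiv N(J)$ on $J''$.
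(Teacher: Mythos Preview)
Your proof is correct and follows essentially the same route as the paper: both arguments reduce to showing that $S_k=\sum_{J\in\mathcal{Q}_k\setminus\mathcal{N}}N(J)|J|$ decays geometrically and then invoke~(\ref{raffin3}). The only cosmetic difference is in how the decay of $S_k$ is obtained: you track the auxiliary quantity $T_k$ via~(\ref{raffin1}) and solve the coupled recursion $S_{k+1}\leq\kappa^{-2}S_k+C\kappa^{-k}$, whereas the paper observes that for $J\in\mathcal{Q}_k\setminus\mathcal{N}$ the order $N(J)$ grows with $k$ (each refinement step strictly increases it), so that eventually $\kappa^{-2}N(J)+C\leq\kappa^{-1}N(J)$ and~(\ref{raffin4}) alone gives $S_{k+1}\leq\kappa^{-1}S_k$ directly.
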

\begin{proof}
In case \ref{C-} and \ref{C+} are satisfied, $\mathcal{Q}_0=\mathcal{R}$ is summable (Section~\ref{pr}) and 

From~\eqref{raffin3}, each $\mathcal{Q}_k$ is summable. Note that for $k$ large enough and $J\in\mathcal{Q}_k\setminus\mathcal{N}$, the order $N(J)$ is large so that $\kappa^{-2}N(J)+C\leq \kappa^{-1}N(J)$. One deduces from~(\ref{raffin4}) that for some $C'>0$,

\begin{equation*}
\label{raffin5}
\forall k\in\mathbb{N},\;
\sum_{J\in\mathcal{Q}_{k}\setminus\mathcal{N}}
N(J)|J|\leq C'\kappa^{-k}
\sum_{J\in\mathcal{R}}N(J)|J|,
\end{equation*}

and by~(\ref{raffin3}) $N^0:\mathcal{N}'\rightarrow \mathbb{N}$ is summable.
\end{proof}

\subsubsection{\textit{Proof of  Proposition~\ref{fondap}}}\label{conclusion}

 We have proved the proposition on $\mathbb{T}^1\setminus \II$. In restriction to $\interior(\II)$, the map $f$ is a local diffeomorphism. One gets the partition $\mathcal{N}^0$ on $\II$ by pulling back by $f$ the partition $\mathcal{N}\cup \{\II\}$ on $\mathbb{T}^1$. One deduces from Lemma~\ref{sompr1} that $\mathcal{N}^0$ is a measurable partition of $\II$. For almost any $x\in\interior(\II)$, either $f(x)\in\II$ and $N^0(x)=1$ or $f(x)\in J$ for some $J\in\mathcal{N}'$.

The summability of $\mathcal{N}^0$ on compact subsets of $\interior(\II)$ follows from the summability of $\mathcal{N}$ on $\mathbb{T}^1\setminus \II$.
Assumption (A1) gives, on a neighborhood of each critical point, an orientation reversing diffeomorphism $\theta$ which satisfies $f\circ \theta=f$. This shows that the summability of $\mathcal{N}$ near the critical points holds, once it holds on $\mathbb{T}^1\setminus \II$. In case conditions \ref{C-} and \ref{C+} are satisfied, and by Lemma~\ref{sompr}, one concludes that $N^0:\mathcal{N}^0\rightarrow\mathbb{N}$ is summable. The Proposition~\ref{fondap} is now proved.\qed

\subsubsection{\textit{Proof of Proposition~\ref{fonda}.}} From Propositions~\ref{fondap}.(\ref{fondap1}), one knows that the family $\mathcal{N}^0$ is a measurable partition of $\II$ and that for any $J\in\mathcal{N}^0$, the map $T^0$ is a $C^1$ diffeomorphism from $\interior(J)$ onto $\interior(\II)$. The distortion of $T^0$ has been bounded at Proposition~\ref{distorp3}. This implies that $T^0$ is a Markov map of $\interior(\II)$. 

\subsubsection{\textit{Proof of Proposition~\ref{fondb}.}} From Proposition~\ref{fondap}.(\ref{fondap2}), one knows that if conditions \ref{C-} and \ref{C+} are satisfied then $N^0:\mathcal{N}^0\rightarrow \mathbb{N}$ is summable.

Reciprocally let us assume that $N^0:\mathcal{N}^0\rightarrow \mathbb{N}$ is summable. Arguing as in Section~\ref{conclusion}, $\mathcal{N}$ is then summable on a neighborhood of the critical points. The order of any interval in $\mathcal{N}^0$ contained in $[f^{-q_{2k}}_+(c^+),f^{-q_{2k+2}}_+(c^+)]$ is bounded from below by $q_{2k}$. Hence the series $\sum_{k\geq 1}q_{2k}\dd(f^{-q_{2k}}_+(c^+),f^{-q_{2k+2}}_+(c^+))$ is finite.

Note that $f_+^{q_{2k-1}}(c^+)$ belongs to the gap $T$ of the primary decomposition $\mathcal{P}_0$ (see Section~\ref{pint}) adjacent to $I_{k,0}^+$ with $T<I_{k,0}^+$. By Lemma~\ref{ptes}, $|T|\leq \eta |I_{k,0}^+|$. Hence$$d(f_+^{q_{2k-1}}(c^+),f^{-q_{2k}}_+(c^+))\leq \eta \dd(f^{-q_{2k}}_+(c^+),f^{-q_{2k+2}}_+(c^+)).$$

By Remark~\ref{rkgraczyk}, for $k$ large enough,
$$\dd(f^{-q_{2k+2}}_+(c^+),c^+)<\dd(I_{k,0},c^+)<|I_{k,0}|<\dd(f^{-q_{2k}}_+(c^+),f^{-q_{2k+2}}_+(c^+)).$$

Consequently, $\dd(f_+^{q_{2k-1}}(c^+),c^+)$ is bounded by $3\dd(f^{-q_{2k}}_+(c^+),f^{-q_{2k+2}}_+(c^+))$. This shows that \ref{C+} is satisfied. The same holds for \ref{C-}.
\qed

\section{Proof of Theorem \ref{thmA} and Theorem \ref{thmB}}\label{mainthm}

In this final section, as usual, $\phi_\ast\mu$ denotes the push-forward of a Borel measure $\mu$ under a Borel map $\phi$, that is, $\phi_\ast\mu(A)=\mu\big(\phi^{-1}(A)\big)$ for any Borel set $A\subset\cir$. Also, given an interval $J$ we denote by $\chi_J$ its characteristic function, and by $\mu\res{}{J}$ the Borel measure given by $\big(\mu\res{}{J}\big)(A)=\mu(A \cap J)$ for any Borel set $A$.

The key tool in order to build invariant measures that are absolutely continuous with respect to $\lambda$ is the following folklore theorem.

\renewcommand{\theenumi}{\roman{enumi}}

\begin{theorem}[see~\cite{demelovanstrien}, Chapter V, Theorem 2.2]\label{induce} Let $T$ be a Markov map of $\interior(\II)$. Then there exists a $T$-invariant probability measure $\mu^\ast$ which is equivalent to the Lebesgue measure $\lambda$ on $\interior(\II)$. Moreover,
\begin{enumerate}
\item\label{densityinduce} its density $\frac{\dd \mu^\ast}{\dd \lambda}$ is uniformly bounded from above and from below;
\item\label{ergodicinduce} $\mu^\ast$ is ergodic;
\item\label{induce3} the Lyapunov exponent $\int \log|\D T|\dd \mu^\ast$ of $\mu^\ast$ is strictly positive.
\end{enumerate}
\end{theorem}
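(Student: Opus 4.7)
The strategy is the classical transfer-operator (Perron--Frobenius) approach for Markov maps with bounded distortion. Let $L$ denote the transfer operator of $T$,
\[
L\phi(x) \;=\; \sum_{I_m \in \mathcal{N}} \frac{\phi(y_m(x))}{|\D T(y_m(x))|},
\]
where $y_m(x)\in I_m$ is the unique preimage of $x$. The first step is to show that the iterates $L^n\mathbf{1}$ are uniformly bounded away from $0$ and $+\infty$ on $\interior(\II)$. For an $n$-cylinder $J = J_0\cap T^{-1}(J_1)\cap\dots\cap T^{-(n-1)}(J_{n-1})$ with $J_i\in\mathcal{N}$, the Markov property makes $T^n|_J\colon J\to\interior(\II)$ a diffeomorphism, and Definition~\ref{markovdef}.\ref{markovdef2} gives $|J|\,|\D T^n(\xi)|$ comparable to $|\II|$ with multiplicative constant $1+D_0|\II|$, uniformly in $\xi\in J$. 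Summing $L^n\mathbf{1}(x) = \sum_J |\D T^n(y_J(x))|^{-1}$ over all $n$-cylinders and using $\sum_J |J| = |\II|$ yields uniform two-sided bounds $0<C_1\le L^n\mathbf{1}\le C_2<\infty$ on $\interior(\II)$.

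The Ces\`aro averages $h_N := N^{-1}\sum_{n=0}^{N-1}L^n\mathbf{1}$ inherit these bounds, and Banach--Alaoglu produces a weak-$\ast$ subsequential limit $h\in L^\infty(\lambda)$ satisfying $Lh=h$ and $C_1\le h\le C_2$. Normalizing, $d\mu^\ast := h\,d\lambda/\int h\,d\lambda$ is a $T$-invariant probability measure equivalent to $\lambda$ whose density is uniformly bounded above and below, proving conclusion~(\ref{densityinduce}). For ergodicity~(\ref{ergodicinduce}), I would argue that the $n$-cylinders generate the Borel $\sigma$-algebra (their lengths are bounded by $(1+D_0|\II|)|\II|/|\D T^n|$, which decays along orbits by the expansion inherent in the Markov property with each $I_m$ mapped onto $\II$); then apply the Lebesgue density theorem together with the Lipschitz form of the distortion bound: for a $T$-invariant set $A$ of positive measure, density points sit inside shrinking cylinders $J$ with $\lambda(A\cap J)/|J|\to 1$, and pushing forward under $T^n|_J$, the distortion restricted to preimages of small subintervals of $\II$ becomes arbitrarily close to $1$, forcing $\lambda(A)=|\II|$.

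For the positive Lyapunov exponent~(\ref{induce3}), combining Birkhoff's theorem with the cylinder-length estimate $|J_n(x)|\le(1+D_0|\II|)|\II|/|\D T^n(x)|$ gives, for $\mu^\ast$-a.e.\ $x$,
\[
\int\log|\D T|\,d\mu^\ast \;=\; \lim_{n\to\infty}\tfrac{1}{n}\log|\D T^n(x)| \;\ge\; \liminf_{n\to\infty} -\tfrac{1}{n}\log|J_n(x)|;
\]
the Shannon--McMillan--Breiman theorem applied to the generating partition $\mathcal{N}$ then identifies the right-hand side with the metric entropy $h_{\mu^\ast}(T,\mathcal{N})>0$, positivity being automatic since $\mathcal{N}$ is a nontrivial generator (alternatively, Rokhlin's entropy formula gives the equality directly). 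The main technical obstacle throughout is the countability of $\mathcal{N}$, which turns $L\phi$ into an infinite sum; the bounded-distortion property of Definition~\ref{markovdef}.\ref{markovdef2} is precisely what keeps these sums uniformly convergent and drives every step of the argument.
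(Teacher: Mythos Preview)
Your treatment of parts~(\ref{densityinduce}) and~(\ref{ergodicinduce}) via the transfer operator and Lebesgue density points is the standard route, and is essentially what the cited reference carries out; the paper does not reprove those parts.

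For part~(\ref{induce3}) your argument has a real gap. You invoke Shannon--McMillan--Breiman for the countable partition $\mathcal{N}$, but that theorem requires $H_{\mu^\ast}(\mathcal{N})=-\sum_m \mu^\ast(I_m)\log\mu^\ast(I_m)<\infty$, which is not among the hypotheses and can genuinely fail for countable Markov partitions (e.g.\ $|I_m|\asymp 1/(m\log^2 m)$ gives $\sum|I_m|<\infty$ but $-\sum|I_m|\log|I_m|=\infty$). The fallback to Rokhlin's formula is circular: to extract positivity of the Lyapunov exponent from $h_{\mu^\ast}(T)=\int\log|\D T|\,d\mu^\ast$ you must already know $h_{\mu^\ast}(T)>0$, and your justification ``positivity being automatic since $\mathcal{N}$ is a nontrivial generator'' is false as stated --- an irrational rotation has nontrivial generating partitions and zero entropy. (In the present setting positivity of entropy does hold, but it needs an argument: at least two $I_m$ carry positive $\mu^\ast$-mass because $\mu^\ast\sim\lambda$, and the full-branch Markov structure makes the system a factor of a Bernoulli shift on those symbols.)

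The paper's proof of~(\ref{induce3}) is elementary and sidesteps entropy entirely. One observes that $T^n$ is again a Markov map with the \emph{same} distortion constant $D_0$, and that for $n$ large the $n$-cylinders $J\in\mathcal{N}_n$ are uniformly small (the same shrinking you already invoke for ergodicity). As soon as $|\II|>2(1+D_0|\II|)\,|J|$ holds for every $J\in\mathcal{N}_n$, the distortion bound in Definition~\ref{markovdef}.\ref{markovdef2} forces $|\D T^n(x)|>2$ for every $x$, hence $\int\log|\D T^n|\,d\mu^\ast>\log 2$, and by $T$-invariance of $\mu^\ast$ one concludes $\int\log|\D T|\,d\mu^\ast>(\log 2)/n>0$.
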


Assertion~\eqref{induce3} is not stated in~\cite{demelovanstrien} but follows easily from the definitions. Indeed:

\begin{proof}[Proof of Theorem~\ref{induce}, assertion~\eqref{induce3}] We note that for any integer $n\geq 1$, $T^n$ is a Markov map and the constant $D_0$ remains the same. The partition $\mathcal{N}$ has to be refined and replaced by a partition $\mathcal{N}_n$. By bounded distortion (Definition~\ref{markovdef}.\eqref{markovdef2}), if $n$ is large enough, the intervals $J\in\mathcal{N}_n$ can be assumed arbitrarily small so that $$\II>2(1+D_0|\II|)|J|.$$

One deduces that for any $J\in\mathcal{N}_n$ and $x\in J$, $|\D T^n(x)|>2$. Thus one gets $\int \log |\D T^n|\ddi \mu^\ast>\log(2)$. As $\mu^\ast$ is $T$-invariant, one deduces$$\int \log|\D T| \ddi \mu^\ast>\frac{\log(2)}{n}>0.$$
\end{proof}

\subsection{Proof of Theorem \ref{thmA}}\label{pfs1} From Proposition~\ref{fonda} one knows that $T^0$ is a Markov map of $\interior(\II)$, according to Definition \ref{markovdef}. By Theorem~\ref{induce} (folklore theorem) $T^0$ preserves a probability measure $\mu^\ast$ in $\interior(\II)$ which is absolutely continuous with respect to the Lebesgue measure. We define the following Borel measure in the unit circle:$$\bar \mu=\sum_{J\in\mathcal{N}^0}\sum_{k=0}^{N(J)-1}f_\ast^k\big(\mu^\ast\res{}{J}\big)\,.$$

Since $\sum_{J\in\mathcal{N}^0}f_\ast^{N(J)}\big(\mu^\ast\res{}{J}\big)=T_\ast^0\mu^\ast=\mu^\ast=\sum_{J\in\mathcal{N}^0}\mu^\ast\res{}{J}$ one obtains $f_\ast\bar\mu=\bar\mu$, that is, the measure $\bar\mu$ is $f$-invariant. It is also clear that $\bar\mu$ is absolutely continuous with respect to the Lebesgue measure (since $\mu^\ast$ itself is absolutely continuous and $f$ is smooth). The restrictions of $\bar \mu$ and $\lambda$ to $\II$ are equivalent. The same holds on $\mathbb{T}^1$, from the invariance and the following lemma:
\begin{lemma}\label{l.recouvre}
There exists $N\geq 1$ such that $f^N(\II)=\mathbb{T}^1$.
\end{lemma}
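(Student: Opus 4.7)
The plan is to derive this lemma as an immediate consequence of the topological mixing property of $f$ on the circle, which is announced already in the caption of Figure \ref{figcovering} and proved as Proposition \ref{topmix} in Appendix \ref{apptopasp}. For a continuous piecewise monotone endomorphism of $\mathbb{T}^1$ satisfying (A1)--(A3), topological mixing in the whole circle gives the ``locally eventually onto'' conclusion: for every non-empty open set $U \subset \mathbb{T}^1$ there exists $N \in \mathbb{N}$ with $f^N(U) = \mathbb{T}^1$. Since $\interior(\mathbb{I}) \neq \emptyset$ by definition, applying this to $U = \interior(\mathbb{I})$ yields an integer $N \geq 1$ with $f^N(\mathbb{I}) \supset f^N(\interior(\mathbb{I})) = \mathbb{T}^1$, and the lemma follows.

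If instead one wishes to bypass Appendix \ref{apptopasp}, a direct route would go as follows. Let $U_n = f^n(\interior(\mathbb{I}))$: each $U_n$ is a non-empty connected subset of $\mathbb{T}^1$, and moreover $U_n$ contains the iterates $f^n(c^\pm)$ of the critical points. By irrationality of $\rho^\pm$ and semiconjugacy of the upper (resp.\ lower) map to the rotation by $\rho^+$ (resp.\ $\rho^-$), the forward orbits of $c^+$ and of $c^-$ are each dense in $\mathbb{T}^1$. In particular the closure $\overline{\bigcup_n U_n}$ equals $\mathbb{T}^1$. Together with connectedness of $U_n$ and with a standard Markov-type argument using expansion away from the critical set (condition (A3) and the absence of wandering intervals in the bimodal setting), one promotes this density to an actual covering $f^N(\interior(\mathbb{I})) = \mathbb{T}^1$ for some $N$.

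The main potential obstacle is the gap between ``classical'' topological mixing (for any open $U,V$, $f^n(U) \cap V \neq \emptyset$ eventually) and the \emph{leo} statement ``$f^N(U) = \mathbb{T}^1$ for some single $N$''. For general continuous maps these notions differ; however for our class $\Bimod$, where $f$ has positive topological entropy and all periodic orbits are repelling, the two coincide (this is essentially what Appendix \ref{apptopasp} establishes), so the argument in the first paragraph is the cleanest route and is the one I would use in the paper.
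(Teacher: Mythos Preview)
Your primary route is circular. In this paper, Proposition~\ref{topmix} is \emph{proved using} Lemma~\ref{l.recouvre}: the argument in Appendix~\ref{apptopasp} first uses the Markov structure to bring an iterate of any interval onto $\II$, and then explicitly invokes Lemma~\ref{l.recouvre} to pass from $\II$ to the whole circle. So you cannot appeal to Proposition~\ref{topmix} here without begging the question. Your remark that ``this is essentially what Appendix~\ref{apptopasp} establishes'' has the logical dependency backwards.

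Your fallback sketch also misses the actual mechanism. Density of the forward orbits of $c^\pm$ is neither immediate from the semiconjugacy (which may collapse arcs, leaving only a Cantor minimal set) nor what is needed. The paper's proof is a one-line computation on the lift that you overlooked: the endpoints $\tilde c^+,\tilde c^-$ of $\tilde\II$ have rotation numbers $\rho^+$ and $\rho^-$ respectively, and these are \emph{distinct} (the upper and lower maps $\tilde f_+\neq\tilde f_-$ are ordered with irrational rotation numbers). Hence the length of $\tilde f^n(\tilde\II)$, which contains both $\tilde f^n(\tilde c^+)$ and $\tilde f^n(\tilde c^-)$, grows at rate at least $n(\rho^+-\rho^-)$ and eventually exceeds $1$, so its projection is all of $\mathbb{T}^1$. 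No Markov structure, no absence of wandering intervals, and no expansion argument is required.
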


\begin{proof}[Proof of Lemma \ref{l.recouvre}] Since the maps $\tilde f_-\leq \tilde f_+$ are distinct, the irrational rotation numbers $\rho^-,\rho^+$ are distinct.
Since the orbits of $\tilde c^+$ (resp. $\tilde c^-$) have rotation number $\rho^+$ (resp. $\rho^-$),
for $n\geq 1 $ large enough the image $f^n([\tilde c^+,\tilde c^-])$ has length larger than $1$.
\end{proof}

Therefore, as pointed out in the introduction, any map in $\Bimod$ preserves a $\sigma$-finite measure which is equivalent to the Lebesgue measure. The proof of Theorem \ref{thmA} has now three steps.

\begin{enumerate}
\item \label{pf1} We claim first that if the summability conditions \ref{C-} and \ref{C+} are satisfied, $\bar \mu$ is finite. Indeed, note first that:
\begin{align*}
\bar\mu(\cir)&=\sum_{J\in\mathcal{N}^0}\sum_{k=0}^{N(J)-1}f_\ast^k\big(\mu^\ast\res{}{J}\big)(\cir)=\sum_{J\in\mathcal{N}^0}\!N(J)\,\mu^\ast(J)\,.
\end{align*}

Therefore, to prove that $\bar\mu$ is a finite measure we need to prove that the series $\displaystyle\sum_{J\in\mathcal{N}^0}\!N(J)\,\mu^\ast(J)$ is finite. By Proposition~\ref{fondb}, conditions \ref{C-} and \ref{C+} are satisfied if and only if $\sum_{J\in\mathcal{N}^0}N(J)|J|$ is finite. By Assertion \eqref{densityinduce} of Theorem~\ref{induce} the density $\frac{\dd \mu^\ast}{\dd \lambda}$ is uniformly bounded from above, and then the sequence $\big\{\mu^\ast(J)/|J|\big\}_{J\in\mathcal{N}^0}$ is bounded. This implies that $\displaystyle\sum_{J\in\mathcal{N}^0}\!N(J)\,\mu^\ast(J)$ is finite, as claimed.

Therefore, after normalization, $f$ preserves a \emph{probability} measure $\mu$ which is absolutely continuous with respect to the Lebesgue measure. Namely:$$\mu=\left(\frac{1}{\sum_{J\in\mathcal{N}^0}\!N(J)\,\mu^\ast(J)}\right)\sum_{J\in\mathcal{N}^0}\sum_{k=0}^{N(J)-1}f_\ast^k\big(\mu^\ast\res{}{J}\big)\,.$$

Since the system $(T^0,\mu^\ast)$ is ergodic, the same is true for $(f,\mu)$. Finally, note that:
\begin{align*}
\int\log|\D T^0|\,d\mu^\ast&=\sum_{J\in\mathcal{N}^0}\int\log|\D f^{N(J)}|\,d(\mu^\ast\res{}{J})\\
&=\sum_{J\in\mathcal{N}^0}\sum_{k=0}^{N(J)-1}\int\log|\D f| \circ f^{k}\,d(\mu^\ast\res{}{J})\\
&=\sum_{J\in\mathcal{N}^0}\sum_{k=0}^{N(J)-1}\int\log|\D f| \,d\,f_\ast^k(\mu^\ast\res{}{J})=\int\log|\D f|\,d\bar\mu\,.
\end{align*}

From Assertion \eqref{induce3} of Theorem~\ref{induce} we know that $(T^0,\mu^\ast)$ has a Lyapunov exponent which is strictly positive, and therefore this is also true for $(f,\bar\mu)$, and then for $(f,\mu)$.

\item\label{pf2} Reciprocally, we show in the next section that if $f$ preserves a probability measure $\nu$ which is absolutely continuous with respect to the Lebesgue measure, the summability conditions \ref{C-} and \ref{C+} are satisfied.

\item\label{pf3} One ends by proving Theorem \ref{thmA} as a direct consequence of Steps~\eqref{pf1} and~\eqref{pf2}: let us assume that $f$ preserves a probability measure $\nu$ which is absolutely continuous with respect to the Lebesgue measure $\lambda$. From Step~\eqref{pf2}, one gets the summability conditions \ref{C-} and \ref{C+}. From Step~\eqref{pf1}, $f$ preserves a probability measure $\mu$ which is ergodic and equivalent to the Lebesgue measure. In particular $\nu\ll\mu$ and then $\nu=\mu$ by the ergodicity of $\mu$. Again from Step~\eqref{pf1} we already know that $\mu$ has a strictly positive Lyapunov exponent, and this implies that $\mu$ has positive metric entropy (see for instance \cite{ledra}). Finally, the fact that $\mu$ is a global physical measure for $f$ (see Definition \ref{deffis}) follows from Birkhoff's Ergodic Theorem, since $\mu$ is ergodic and equivalent to the Lebesgue measure.
\end{enumerate}

It remains to prove Step~\eqref{pf2}, that is, the fact that both conditions \ref{C-} and \ref{C+} are \emph{necessary} for an element in $\Bimod$ to leave invariant an absolutely continuous \emph{probability} measure (recall that, as we saw in Section \ref{pfs1}, any map in $\Bimod$ preserves a $\sigma$-finite measure which is equivalent to Lebesgue).

\subsection{Existence $\implies$ summability conditions \ref{C-} and \ref{C+}} Let $\nu$ be an $f$-invariant Borel probability measure in $\cir$, which is absolutely continuous with respect to Lebesgue.

\subsubsection{}\label{s1} We claim first that $\nu(\II)>0$. Indeed, from Proposition~\ref{fondap} we know that Lebesgue-almost every point returns to $\II$, that is:
\begin{equation}\label{aeret}
\lambda\big(\big\{x\in\mathbb{T}^1:f^n(x)\in\cir\!\setminus\!\II\,\mbox{ for all $n \geq 1$}\big\}\big)=0\,.
\end{equation}

Since $\nu$ is absolutely continuous with respect to Lebesgue, we get from \eqref{aeret} that $\nu\big(\cup_{n \geq 1}f^{-n}(\II)\big)=1$, that is, $\nu$-almost every point returns to $\II$. This implies at once that $\nu(\II)>0$, since $\nu$ is $f$-invariant.

We consider now the Birkhoff averages for the characteristic function $\chi_\II$ of $\II$ under the action of $f$, that is, for any $n\in\mathbb{N}$ and $x\in\mathbb{T}^1$, we denote:$$S_n(x)=\frac{1}{n}\sum_{k=0}^{n-1}\chi_\II(f^k(x)).$$

Let $A_0=\{x\in \II,\; \displaystyle\lim_{n\to+\infty}S_n(x)>0\}$. We claim that $\lambda(A_0)>0$.

Indeed, by Birkhoff's Ergodic Theorem the sequence $S_n(x)$ converges for $\nu$-almost every point $x$ to some non-negative real number $h(x)\geq 0$. The measurable function $h:\mathbb{T}^1\rightarrow \mathbb{R}$ is $f$-invariant and has a strictly positive integral under $\nu$:$$\int_{\mathbb{T}^1} h\ddi \nu=\int_{\mathbb{T}^1}\chi_\II\ddi\nu=\nu(\II)>0\,.$$
In particular $h(x)>0$ for $x$ in a subset of $\mathbb{T}^1$ with positive $\nu$-measure. Since $h$ and $\nu$ are invariant, and using~\eqref{aeret}, one gets $\nu(A_0)>0$. This proves the claim, since $\nu$ is absolutely continuous with respect to Lebesgue.

\subsubsection{}\label{s2} Let us consider now the Markov system $(T^0,\mu^\ast)$ and the Birkhoff averages of the return time $N^0$: for any $m\in\mathbb{N}$ and $\lambda$-almost any $x\in\II$, we denote:$$R_m(x)=\frac{1}{m}\sum_{k=0}^{m-1}N^0\big((T^0)^k(x)\big)\,.$$

\begin{lemma}\label{lemnec} There exists a Borel set $A\subset\II$ with $\mu^\ast(A)>0$ such that for all $x \in A$ we have $\displaystyle\lim_{m\to+\infty}R_m(x)<\infty$.
\end{lemma}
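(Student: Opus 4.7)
The plan is to relate the Birkhoff averages $S_n$ of $\chi_{\II}$ under $f$, studied in Subsection~\ref{s1}, to the averages $R_m$ of the return time $N^0$ under the induced map $T^0$. The key observation is an elementary identity: for $x \in \II$ where all iterates $(T^0)^k(x)$ are defined, if one sets $\tau_m(x) = \sum_{k=0}^{m-1} N^0\big((T^0)^k(x)\big)$, then the $f$-orbit $\{x, f(x), \ldots, f^{\tau_m - 1}(x)\}$ visits $\II$ exactly at the times $0, \tau_1, \ldots, \tau_{m-1}$, i.e.\ exactly $m$ times. Hence
$$S_{\tau_m}(x) \;=\; \frac{m}{\tau_m(x)} \;=\; \frac{1}{R_m(x)}.$$

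I would take $A$ to be the set $A_0 \subset \II$ defined in Subsection~\ref{s1}, intersected with the Lebesgue-full-measure subset of $\II$ where all iterates $(T^0)^k$ are defined. To see that $\mu^*(A) > 0$, recall that $\nu(A_0) > 0$ was established in~\ref{s1}; since $\nu \ll \lambda$ this forces $\lambda(A_0) > 0$, and the equivalence of $\mu^*$ and $\lambda$ on $\interior(\II)$ (Theorem~\ref{induce}~\eqref{densityinduce}) then gives $\mu^*(A_0) > 0$.

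Finally, for any $x \in A$ the Birkhoff limit $h(x) = \lim_n S_n(x)$ exists and is strictly positive. Since $N^0 \geq 1$ one has $\tau_m(x) \geq m \to \infty$, so applying the above identity along the subsequence $n = \tau_m(x)$ yields
$$\frac{1}{R_m(x)} \;=\; S_{\tau_m(x)}(x) \;\xrightarrow[m\to\infty]{}\; h(x) \;>\; 0,$$
and therefore $R_m(x) \to 1/h(x) < \infty$, as desired.

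There is no real obstacle here: the argument is essentially a Kac-type bookkeeping identity combined with the conclusion of Subsection~\ref{s1}. The only point to be careful about is verifying $\mu^*(A) > 0$, which requires the chain $\nu(A_0) > 0 \Rightarrow \lambda(A_0) > 0 \Rightarrow \mu^*(A_0) > 0$ using, respectively, $\nu \ll \lambda$ (hypothesis on $\nu$) and $\mu^* \sim \lambda$ on $\interior(\II)$ (folklore theorem).
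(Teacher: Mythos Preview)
Your proof is correct and follows essentially the same approach as the paper: both arguments rest on the Kac-type identity $R_m(x)\,S_{\tau_m(x)}(x)=1$ (the paper writes it as $R_m(x)\,S_{n(m)}(x)=1$ with $n(m)=\tau_m(x)$) combined with the fact, established in Subsection~\ref{s1}, that $\lim_n S_n(x)>0$ on the positive-measure set $A_0$. The only cosmetic difference is that the paper intersects $A_0$ with the Birkhoff-regular set $A_1$ for $(T^0,\mu^\ast)$ to guarantee \emph{a priori} that $R_m$ converges, whereas you deduce convergence of $R_m$ directly from the identity and the convergence of $S_n$; your route is slightly more economical but otherwise identical.
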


\begin{proof}[Proof of Lemma \ref{lemnec}] From Theorem \ref{induce} we know that $\mu^\ast$ is ergodic under the action of $T^0$, and that it is equivalent to Lebesgue. Birkhoff's Ergodic Theorem implies then that there exists a Borel set $A_1\subset\II$ such that $\lambda(\II \setminus A_1)=0$ and such that for all $x \in A_1$ the sequence $R_m(x)$ converges as $m$ goes to infinity.

Let $A= A_0\cap A_1$, where $A_0\subset\II$ was obtained in \ref{s1}, and note that $\mu^\ast(A)>0$ since $\lambda(A)>0$. We fix $x \in A$ and for each $m\in\nt$ let $n=n(m)\in\nt$ be given by $n=m\,R_m(x)$, that is:$$n(m)=\sum_{k=0}^{m-1}N^0\big((T^0)^k(x)\big).$$

By definition of $T^0$ and $N^0$, we know that between iterates $0$ to $n-1$ the $f$-orbit of $x$ falls precisely $m$ times in $\II$, that is, $m=\sum_{k=0}^{n-1}\chi_\II(f^k(x))=n\,S_n(x)$. In particular:
\begin{equation}\label{claims2}
R_m(x)\,S_{n(m)}(x)=1\quad\mbox{for all $m \geq 1$.}
\end{equation}

Since $x \in A \subset A_0$ we know from \ref{s1} that $\displaystyle\lim_{n\to+\infty}S_n(x)>0$, and then we obtain from \eqref{claims2} that $\displaystyle\lim_{m\to+\infty}R_m(x)<\infty$.
\end{proof}

\subsubsection{} From Birkhoff's Ergodic Theorem we know that for $\mu^\ast$ almost every $x\in\II$ the sequence $R_m(x)$ converges to $\int_\II N^0\ddi \mu^\ast$ as $m$ goes to infinity, and therefore we obtain from Lemma \ref{lemnec} the finiteness condition $\int_{\II}N^0\,d\mu^{\ast}<\infty$.

As remarked at Section~\ref{pfs1}, Assertion \eqref{densityinduce} of Theorem~\ref{induce} implies that this finiteness condition is equivalent to the fact that the family $\mathcal{N}^0$ is summable. By Proposition \ref{fondb}, the latter is equivalent to the fact that both conditions \ref{C-} and \ref{C+} hold.
This concludes the proof of Step~\eqref{pf2} (and the proof of Theorem \ref{thmA}).

\subsection{Proof of Theorem \ref{thmB}}\label{provadoB}
\subsubsection{} From Graczyk's estimates already mentioned in Section \ref{S:geomest}, it follows that $\dd\big(f_+^{q_{2k-1}^+}(c^+),c^+\big)$ goes to zero \emph{super-exponentially fast} \cite[Definition 1.1 and Theorem 1]{jacekmanuscript}, and from this fact we will deduce in this section that both \ref{C-} and \ref{C+} hold under a suitable Diophantine condition.

The proof goes by elementary calculus: let $l=\max\big\{\ell^-\,,\,\ell^+\big\}>1$ and fix some constant $\beta=\beta(l)$ determined by$$0<\beta<\sqrt{1+1/2l}-1\,.$$

Note that:$$1<(1+\beta)^2<1+1/2l=\left(\frac{l+1/2}{l+1}\right)\left(1+\frac{1}{l}\right)<1+1/l\,.$$

We assume from now on that both irrational numbers $\rho^-$ and $\rho^+$ are Diophantine with exponent $\beta$. Using Corollary~\ref{bound-q-n}, this implies that $q_{n+1}^+\leq cte. (q_n^+)^\beta$ for each $n$. Hence, there exists $C=C(\rho^-,\rho^+)>0$ such that for any $n\in\mathbb{N}$ we have$$\log(q_{n}^+)\leq C(1+\beta)^n\quad\mbox{and}\quad\log(q_{n}^-)\leq C(1+\beta)^n\,.$$

From \cite[First Basic Lemma, page 271]{jacekmanuscript} there exists a constant $\gamma\in(0,1)$ such that for all $k\in\nt$ we have:
\begin{align*}
\log\dd\big(f_+^{q_{2k-1}^+}(c^+),c^+\big)&\leq\log\gamma\!\prod_{i=1}^{k-1}\left(1+\frac{a_{2i+1}^+}{\ell^+}\right)\leq\log\gamma\,(1+1/\ell^+)^{k-1}<0\,.
\end{align*}

We have used that $a_{2i+1}^{+} \geq 1$ for all $i\in\nt$. With this at hand we have:
\begin{align*}
\log\big(q_{2k}^+\dd(f^{q_{2k-1}^+}(c^+),c^+)\big)&\leq C(1+\beta)^{2k}+\log\gamma\,(1+1/\ell^+)^{k-1}\\
&\leq \left[C\left(\frac{\ell^++1/2}{\ell^++1}\right)^{k}+\frac{\log\gamma}{1+1/\ell^+}\right](1+1/\ell^+)^{k}\,.
\end{align*}

Let $k_0\in\nt$ be such that:$$0<\left(\frac{\ell^++1/2}{\ell^++1}\right)^{k}<\frac{-\log\gamma}{2C}\left(\frac{\ell^+-1}{\ell^++1}\right)\quad\mbox{for all $k \geq k_0$\,.}$$

Then:$$C\left(\frac{\ell^++1/2}{\ell^++1}\right)^{k}+\frac{\log\gamma}{1+1/\ell^+}<\frac{\log\gamma}{2}<0\quad\mbox{for all $k \geq k_0$\,.}$$

With this at hand we obtain that:$$\sum_{k\geq k_0}q^+_{2k}\dd\big(f^{q^+_{2k-1}}(c^+),c^+\big)\leq\sum_{k\geq k_0}\exp\left(\frac{\log\gamma}{2}\,(1+1/\ell^+)^{k}\right),$$which implies condition \ref{C+} since the ratio:$$\frac{\exp\left(\frac{\log\gamma}{2}\,(1+1/\ell^+)^{k+1}\right)}{\exp\left(\frac{\log\gamma}{2}\,(1+1/\ell^+)^{k}\right)}=\exp\left(\frac{\log\gamma}{2\ell^+}\,(1+1/\ell^+)^{k}\right)$$belongs to $(0,1)$ and goes to zero as $k$ goes to infinity. Note, finally, that the same arguments hold for \ref{C-} and $f_-$ (the setting is symmetric).

\subsubsection{} Let us consider the opposite situation. From \cite[Second Basic Lemma, page 271]{jacekmanuscript} there exists a constant $\gamma\in(0,1)$ such that for all $k\in\nt$ we have:
\begin{align*}
\log\dd\big(f_+^{q_{2k-1}^+}(c^+),c^+\big)&\geq\log\gamma\!\prod_{i=1}^{k-1}\left(1+\frac{\ell^++1}{\ell^+-1}a_{2i+1}^+\right).
\end{align*}

As an example, consider an irrational number $\rho^+$ such that $a_{2k}^+\geq\exp(k^k)$ and $a_{2k+1}^+=1$ for any integer $k$. Then we have:
\begin{align*}
\log\big(q_{2k}^+\dd(f^{q_{2k-1}^+}(c^+),c^+)\big)&\geq\log(q_{2k}^+)+\log\gamma\prod_{i=1}^{k-1}\left(1+\frac{\ell^++1}{\ell^+-1}a_{2i+1}^+\right)\\
&\geq k^k+\log\gamma\left(1+\frac{\ell^++1}{l^+-1}\right)^{k-1}\,.
\end{align*}

This shows that Condition \ref{C+} fails.

\appendix

\section{Topological transitivity}\label{apptopasp} In this appendix we show that maps in the class $\Bimod$ (Definition \ref{ourspace} in Section \ref{setting}) are topologically mixing in the whole circle. This is done without using conditions \ref{C-} and \ref{C+}.

\begin{proposition}\label{topmix} For any endomorphism $f\in\Bimod$ and any non-trivial interval $I$, there exists $N\geq 1$ such that $f^N(I)=\mathbb{T}^1$. In particular $f$ is topologically mixing.
\end{proposition}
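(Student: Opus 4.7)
My plan is to combine the full-branch Markov structure of the first-return map $T^0$ on $\interior(\II)$, available for every $f\in\Bimod$ via Proposition \ref{fonda}, with Lemma \ref{l.recouvre}, which asserts that some iterate of $\II$ covers the whole circle.

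First, I would show that for any non-trivial interval $I\subset\mathbb{T}^1$, some iterate $f^{n_0}(I)$ contains an open sub-interval $K$ of $\interior(\II)$. Since $I$ has positive Lebesgue measure and $\mathcal{N}$ is a measurable partition of $\mathbb{T}^1$ by Proposition~\ref{fondap}.(\ref{fondap1}), the first-entry time $N^0$ to $\II$ is finite Lebesgue-almost everywhere on $I$. Avoiding the countable set of preimages $f^{-j}(\{c^+,c^-\})$ for $0\leq j<n_0$, one can pick $x$ in the interior of $I$ with $n_0=N^0(x)<\infty$ and $f^{n_0}(x)\in\interior(\II)$. A sufficiently small open neighbourhood $U\subset I$ of $x$ is then mapped by $f^{n_0}$ as a diffeomorphism onto an open interval containing $f^{n_0}(x)$; intersecting with $\interior(\II)$ yields the desired $K\subset f^{n_0}(I)\cap\interior(\II)$.

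Next, I would exploit the full-branch Markov property: each atom $J\in\mathcal{N}^0$ is mapped by $T^0$ diffeomorphically onto $\interior(\II)$, and the atoms of $(T^0)^n$ are intersections $J_0\cap(T^0)^{-1}(J_1)\cap\cdots\cap(T^0)^{-(n-1)}(J_{n-1})$, each mapped diffeomorphically onto $\interior(\II)$ by $(T^0)^n$. Combining the distortion bound of Proposition \ref{distorp3} with Theorem \ref{maneinduce} applied to $T^0$ (available because (A3) makes all its periodic orbits hyperbolic repelling), every such atom has diameter at most $C'\kappa^{-n}$ for uniform constants $C'>0$, $\kappa>1$. For $n$ large enough, some atom $J_\star$ of $(T^0)^n$ lies entirely in $K$, and by construction $f^m(J_\star)\supset (T^0)^n(J_\star)=\interior(\II)$ for the integer $m$ equal to the sum of the first $n$ return times along the $(T^0)$-orbit of $J_\star$. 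Hence $f^{n_0+m}(I)\supset\II$, and invoking Lemma \ref{l.recouvre} to get $f^{N'}(\II)=\mathbb{T}^1$, one concludes $f^N(I)=\mathbb{T}^1$ with $N:=n_0+m+N'$.

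Topological mixing then follows immediately: $f$ must be surjective on $\mathbb{T}^1$ (otherwise $\{f^k(\mathbb{T}^1)\}_{k\geq 0}$ is a strictly decreasing chain of proper subsets, contradicting $f^{N'}(\mathbb{T}^1)\supset f^{N'}(\II)=\mathbb{T}^1$), so $f^n(I)=\mathbb{T}^1$ for all $n\geq N$; applied to any open sub-interval of a given open set $U$, this gives $f^n(U)\supset V$ for every open $V$ and all sufficiently large $n$. The main obstacle is the second step above, namely producing a Markov atom of $(T^0)^n$ inside the given open sub-interval $K\subset\interior(\II)$; this amounts to the uniform shrinking of atom diameters and rests on the uniform expansion Theorem \ref{maneinduce}, which is the key non-trivial input that uses hypothesis (A3).
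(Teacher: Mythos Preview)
Your proposal is correct and follows essentially the same route as the paper: use Proposition~\ref{fondap} to land some iterate of $I$ in $\interior(\II)$, use the full-branch Markov structure of $T^0$ to find a cylinder inside that piece which is mapped onto $\II$, and finish with Lemma~\ref{l.recouvre}. The only difference is cosmetic: the paper's proof invokes ``the definition of Markov maps'' for the density/shrinking of cylinders, whereas you make this explicit via Theorem~\ref{maneinduce} and Proposition~\ref{distorp3}, and you spell out the surjectivity argument for mixing.
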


\begin{proof}[Proof of Proposition \ref{topmix}] Let $I$ be a non-trivial interval. By Proposition~\ref{fondap}, there exists a positive iterate $f^m(I)$ which intersects $\interior(\II)$.

Let us fix some $\varepsilon>0$ smaller than $\big|f^m(I)\cap\interior(\II)\big|$. The definition of Markov maps implies that for $\ell\geq 1$ large enough, there exists a dense set of intervals
$J$ in $\II$ such that $(T^0)^\ell(J)=\II$. Consequently $f^{m+\ell}(J)\supset \mathbb{T}^1$.
Then Lemma~\ref{l.recouvre} concludes.
\end{proof}

\section{Ma\~n\'e's theorem for induced maps}\label{appMa}

We prove here our version of Ma\~n\'e's theorem (Theorem~\ref{maneinduce}) for induced maps. The main steps of the proof are the same as in~\cite{demelovanstrien}, Chapter III.5 and some classical parts of the proof are only sketched here.

\subsection{} \textbf{Step 0: control of the distortion.} Let $I$, $T(I),\cdots,T^{n-1}(I)$ be a sequence of intervals in $\mathcal{J}$. Then, for any $x,y\in I$,$$\frac{|\D T^n(x)|}{|\D T^n(y)|}\leq \prod_{k=1}^{n}(1+D_d|T^k(I)|)\leq \exp\left(D_d\sum_{k=1}^n|T^k(I)|\right).$$

\subsection{} \textbf{Step 1: no wandering interval.} Let us consider for any $n\in\mathbb{N}$ the supremum $e_n$ of $|I|$ over all connected components $I$ of $\cap_{k=0}^{n-1}T^{-k}(\mathcal{J})$. We claim that

\begin{equation*}
\begin{CD}
e_n@>>{n\rightarrow\infty}>0.
\end{CD}
\end{equation*}

One proves this claim by contradiction and assumes that there exists some interval $I$ with non-empty interior such that $T^n(I)\subset \mathcal{J}$ for any $n\in\mathbb{N}$. One can suppose that $I$ is maximal with respect to the inclusion for this property. Note that for any interval $U\subset \mathcal{J}$, $\closure(U)$ is contained in $\mathcal{J}$. Thus, $I$ is also compact. The sequence $(T^n(I))$ is not preperiodic, otherwise for some large $n$, $T^n(I)$ would contain a periodic point which is not both hyperbolic and repulsive. The Markov properties of $T$ imply then that all the intervals $T^n(I)$ are disjoint.

Let us fix $\eta_0=(2(1+D_d))^{-1}$. Since any $J\in\mathcal{N}^0$ is a compact subinterval of the open interval $(0,1)$, there exists $0<\eta_1<\eta_0$ such that for any $J\in \mathcal{N}^0$,

\begin{itemize}
\item either $J\subset (0,\eta_0)\cup (1-\eta_0,1)$,
\item or $J\subset (\eta_1,1-\eta_1)$.
\end{itemize}

For $J\in \mathcal{N}^0$ contained in $(0,\eta_0)\cup(1-\eta_0,1)$, the derivative of $T$ on $J$ is greater than $2$. If for any $n$ larger than some integer $n_0$, the interval $J\in\mathcal{N}^0$ that contains $T^n(I)$ is included in $(0,\eta_0)\cup(1-\eta_0,1)$, the derivative of $T^k$ on $T^{n_0}(I)$ is larger than $2^k$ for any $k\in\mathbb{N}$. This implies $|T^{n_0}(I)|=0$ and this is a contradiction.

Consequently, there exists an infinite sequence $n_1<n_2<\cdots$ such that for any $i\in\mathbb{N}$ the interval $J_{n_i}\in\mathcal{N}^0$ containing $T^{n_i}(I)$ is included in $(\eta_1,1-\eta_1)$. Since the intervals $T^n(I)$ are disjoint, on $I$ the distortion of $T^k$ for any $k\in\mathbb{N}$ is bounded by $\exp(D_d)$ (Step 0). For $\varepsilon>0$ and any $i$, we introduce the interval $U_{n_i,\varepsilon}$ containing $T^{n_i}(I)$ such that both components of $U_{n_i,\varepsilon}\setminus T^{n_i}(I)$ have length $\varepsilon|T^{n_i}(I)|$. If $\varepsilon>0$ is small enough, $U_{n_i,\varepsilon}$ is included in $[0,1]$.

Let $I(n_i,\varepsilon)$ be the interval containing $I$ that is mapped onto $U_{n_i,\varepsilon}$ by $T^{n_i}$. By the same argument as in Proposition~\ref{distorp1}, if $\varepsilon>0$ is small, the distortion of $T^{n_i}$ on $I(n_i,\varepsilon)$ is bounded by $2(1+\exp(D_d))$. Consequently,$$|I(n_i,\varepsilon)|>\left(1+(1+\exp D_d)^{-1})\varepsilon\right)|I|.$$

One deduces that the intersection$$I'=\bigcap_i I(n_i,\varepsilon)$$is an interval larger than $I$ and that all its iterates are contained in $\mathcal{J}$. This contradicts the maximality of $I$.

\subsection{} \textbf{Step 2: growth of derivatives, the periodic case.}

\begin{lemma}\label{lgdp} There exists a constant $C_d>0$ such that for any periodic orbit $x_0,\cdots,x_q=x_0$ of $T$ with minimal period $q$,$$\left|\D T^q(x_0)\right|\geq \frac{C_d}{e_q}.$$
\end{lemma}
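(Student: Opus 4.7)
The plan is to convert the desired derivative lower bound into a length estimate on a canonical Markov interval around $x_0$. Let $J_k\in\mathcal{N}^0$ be the partition element with $T^k(x_0)\in J_k$ for $0\le k\le q-1$, and define
\[
I_0 \;=\; J_0\cap T^{-1}(J_1)\cap\cdots\cap T^{-(q-1)}(J_{q-1}),
\]
the connected component of $\bigcap_{k=0}^{q-1}T^{-k}(\mathcal{J})$ containing $x_0$. Since each $T|_{J_k}$ is a diffeomorphism onto $[0,1]$, so is $T^q|_{I_0}$; in particular $|I_0|\le e_q$, and by the mean value theorem there is $\xi\in I_0$ with $|\D T^q(\xi)|=1/|I_0|$. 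Applying the distortion estimate of Step~0 to the orbit $I_0,T(I_0),\ldots,T^{q-1}(I_0)\subset\mathcal{J}$ yields
\[
|\D T^q(x_0)| \;\ge\; \frac{1}{e_q}\,\exp\!\Bigl(-D_d\sum_{k=1}^{q}|T^k(I_0)|\Bigr),
\]
so the lemma will reduce to bounding $\Sigma_q:=\sum_{k=1}^{q}|T^k(I_0)|$ by a universal constant $M_d$, after which I set $C_d:=e^{-D_dM_d}$.

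For the sum, the last term is $|T^q(I_0)|=1$, and for $1\le k\le q-1$ the interval $T^k(I_0)$ lies in $J_k$ and is mapped diffeomorphically by $T^{q-k}$ onto $[0,1]$, so it is contained in a component of $\bigcap_{j=0}^{q-k-1}T^{-j}(\mathcal{J})$; therefore $|T^k(I_0)|\le e_{q-k}$, and $\Sigma_q\le 1+\sum_{j=1}^{q-1}e_j$.

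The main obstacle is thus to upgrade the qualitative statement $e_n\to 0$ from Step~1 to the quantitative summability $\sum_{j\ge 1}e_j<\infty$. I would attack this by refining the dichotomy of Step~1: given a component $I$ of $\bigcap_{k<n}T^{-k}(\mathcal{J})$, iterates $T^k(I)$ whose enclosing partition element lies in the thin region $(0,\eta_0)\cup(1-\eta_0,1)$ are automatically contracted by the inverse branch of $T$ by a factor of at least $2$, while iterates whose enclosing element lies in the thick region $(\eta_1,1-\eta_1)$ can be controlled via the Koebe-type extension $I(n,\varepsilon)\supset I$ already used to reach the contradiction in Step~1: the no-wandering conclusion, combined with the fact that the maximal interval $I$ cannot be enlarged in a way that still lies in the Markov set, forces a definite contraction per such visit. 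Counting the two types of visits along a depth-$n$ orbit yields $e_n\le C\kappa^{-n}$ for some $\kappa>1$, hence $\sum_je_j<\infty$, and plugging $M_d:=1+\sum_je_j$ into the displayed inequality completes the proof of the lemma.
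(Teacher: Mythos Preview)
Your reduction up to the displayed inequality is fine: defining $I_0$ as the Markov cylinder of depth $q$ around $x_0$, using the mean value theorem to get $|\D T^q(\xi)|\ge 1/e_q$, and then invoking Step~0 to transfer this to $x_0$ at the cost of $\exp(-D_d\Sigma_q)$ is correct. The bound $|T^k(I_0)|\le e_{q-k}$ is also correct. The problem is the last step: you reduce the lemma to the summability $\sum_{j\ge 1}e_j<\infty$, and this is precisely what you cannot have at this stage of the argument. Exponential decay $e_n\le C\kappa^{-n}$ is \emph{equivalent} to the uniform hyperbolicity of $T$, which is the conclusion of Theorem~\ref{maneinduce}; the present lemma is an intermediate step towards that conclusion, so assuming summable $e_n$ here is circular. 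Your sketch for obtaining the decay (``the no-wandering conclusion \ldots\ forces a definite contraction per such visit'') does not go through: Step~1 is a proof by contradiction showing only $e_n\to 0$, with no rate, and there is no mechanism in the hypotheses that yields a uniform contraction at each visit to the thick region $(\eta_1,1-\eta_1)$ before hyperbolicity has been established.

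The paper sidesteps this completely by choosing a different interval to pull back. Instead of the full cylinder $I_0$ (which is sent onto $[0,1]$), one first arranges that $x_0$ lies in the thick region and is the point of the orbit in its partition element closest to $0$; one then lets $s\le q$ be the first return of the orbit to the thick region and pulls back a \emph{half}-interval $I_s\in\{[0,x_s],[x_s,1]\}$ by $T^{s-k}$. For $0<k<s$ the iterates sit in the thin region, so $|\D T|\ge 2$ and $\sum_{k=1}^s|I_k|\le 2$; for $s-q<k\le 0$ the closest-point choice forces the pulled-back intervals to be pairwise disjoint, so their total length is at most $1$. Thus $\sum|I_k|\le 3$ unconditionally, and Step~0 gives $|\D T^q(x_0)|\ge e^{-3D_d}|I_s|/|I_{s-q}|\ge e^{-3D_d}\eta_1/e_q$. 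The point you are missing is exactly this use of periodicity plus a careful base-point choice to get disjointness, which replaces your appeal to $\sum e_j<\infty$.
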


\begin{proof} Let $x_0,\cdots,x_q=x_0$ be any periodic orbit $\mathcal{O}$ of $T$ in $\mathcal{J}$ with minimal period $q\geq 1$ (one will also note $x_k=x_{k+q}$ for $-q\leq k<0$). Recall that we have introduced at Step 1 some constants $\eta_0$ and $\eta_1$. For $J\in \mathcal{N}^0$ contained in $(0,\eta_0)\cup(1-\eta_0,1)$, the derivative of $T$ on $J$ is greater than $2$. Hence, if one assumes that $\mathcal{O}\cap (\eta_1,1-\eta_1)=\emptyset$, one gets$$|\D T^q(x_0)|\geq2^q.$$

From now on, one consider the opposite case and one can suppose that
\begin{itemize}
\item $x_0$ belongs to some interval $J_0\in\mathcal{N}^0$ contained in $(\eta_1,1-\eta_1)$;
\item $((0,x_0)\cap J_0)\cap \mathcal{O}=\emptyset.$ (i.e. $x_0$ is the closest point in $\mathcal{O}\cap J_0$ to $0$).
\end{itemize}

Let $0<s\leq q$ be the smallest integer such that $x_s$ belongs to $(\eta_1,1-\eta_1)$. There exists an interval $I_s\in\{[0,x_s],[x_s,1]\}$ and an interval $I_0\subset J_0$ such that

\begin{itemize}
\item $T^s$ maps $I_0$ homeomorphically onto $I_s$;
\item $I_0=[x_0-\xi,x_0]$ for some real number $\xi>0$.
\end{itemize}

One defines for any $s-q\leq k\leq s$ the interval $I_k$ as the unique interval mapped homeomorphically onto $I_s$ by $T^{s-k}$ which contains $x_k$ in its boundary.

On $I_k$, for $0<k<s$, the derivative of $T$ is larger than $2$ so that$$\sum_{k=1}^{s}|I_k|\leq \sum_{n\geq 0}2^{-n}=2.$$

The intervals $I_k$ for $s-q<k\leq 0$ are disjoint: if one supposes $I_k\cap I_l\not =\emptyset$ for $s-q<k<l\leq 0$ then $I_{k-l}\cap I_0\not =\emptyset$. Recall that $I_{k-l}\subset J_0$ and that $x_{k-l}\not\in (0,x_0)\cap J_0$. This implies $x_0\in I_{k-l}$. Hence, $x_{l-k}\in I_0$ which is a contradiction. Consequently,$$\sum_{k=s-q+1}^{s}|I_k|\leq\sum_{k=s-q+1}^{0}|I_k|+\sum_{k=1}^{s}|I_k|\leq 3.$$

This implies by Step 0 that the distortion of $T^q$ on $I_{s-q}$ is bounded:$$\left|\D T^q(x_0)\right|\geq \exp(-3 D_d)\frac{|I_s|}{|I_{s-q}|}\geq \exp(-3 D_d)\frac{\eta_1}{e_q}.$$
\end{proof}

\begin{corollary}\label{gper} There exists $n_0\geq 1$ and a constant $\kappa_0>1$ such that for any periodic orbit $x_0,\cdots,x_q=x_0$ of $T$ and any interval $I$ containing $x_0$ such that $I$, $T(I),\cdots,T^{q+n_0}(I)$ are contained in $\mathcal{J}$ and $I$, $T(I),\cdots,T^{q}(I)$ are disjoint, then, $$\forall x\in I, \; \left|\D T^q(x)\right|\geq \kappa_0.$$
\end{corollary}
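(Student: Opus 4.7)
The argument combines Lemma~\ref{lgdp} (pointwise expansion at $x_0$) with the distortion bound of Step~0 and the no-wandering-intervals property of Step~1. The first task is to pass from an estimate at $x_0$ to a uniform estimate on $I$.

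First, the disjointness assumption on $I, T(I), \ldots, T^q(I)$ controls the total length of their images: since these intervals lie in $[0,1]$ and share at most the point $x_0$ between $I$ and $T^q(I)$, one has $\sum_{k=1}^{q}|T^k(I)| \leq 2$. Step~0 then gives, for every $x \in I$,
$$|\D T^q(x)| \geq K^{-1}\,|\D T^q(x_0)|, \qquad K := \exp(2 D_d).$$
Thus it suffices to bound $|\D T^q(x_0)|$ from below by some constant strictly greater than $K$, uniformly in the periodic orbit.

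Second, Lemma~\ref{lgdp} directly yields $|\D T^q(x_0)| \geq C_d/e_q$, which by Step~1 (i.e., $e_m \to 0$) becomes as large as one wishes for $q$ large. For small $q$ one iterates: $x_0$ has period $Nq$ for every $N \geq 1$, and the chain rule gives $\D T^{Nq}(x_0) = (\D T^q(x_0))^N$, so applying Lemma~\ref{lgdp} at period $Nq$ produces
$$|\D T^q(x_0)| \geq \left(\frac{C_d}{e_{Nq}}\right)^{1/N}.$$
Fixing $n_0$ and choosing $N := \lceil (n_0+1)/q \rceil$ (so $Nq \geq n_0+1$ and $N \leq n_0+2$), monotonicity of $e_m$ in $m$ gives the uniform bound
$$|\D T^q(x_0)| \geq \left(\frac{C_d}{e_{n_0+1}}\right)^{1/(n_0+2)}.$$
Combined with the distortion inequality above, this produces the desired $\kappa_0 > 1$, provided the right-hand side exceeds $K$.

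The main obstacle is justifying the choice of $n_0$: one needs $e_{n_0+1} < C_d K^{-(n_0+2)}$, which amounts to an essentially exponential decay of $e_m$, whereas Step~1 only gives $e_m \to 0$ qualitatively. This gap must be closed by a bootstrap. Lemma~\ref{lgdp} itself, together with the distortion estimate, forces any interval whose iterates remain in $\mathcal{J}$ to shrink: more precisely, the length of a component of $\bigcap_{k=0}^{m-1} T^{-k}(\mathcal{J})$ containing a periodic point is controlled by $e_q \cdot |T^q(I)|/C_d$, and iterating this along longer and longer periodic orbits upgrades the qualitative convergence $e_m \to 0$ to a quantitative exponential rate. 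This bootstrap, rather than any single step above, is the technical core of the argument and the place where one should expect to spend most of the work.
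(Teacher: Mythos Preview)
Your proposal contains a genuine gap, and also overlooks a much simpler route.

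The invocation of Lemma~\ref{lgdp} ``at period $Nq$'' is not justified: the lemma is stated (and its proof, which relies on disjointness of certain intervals along the orbit, works) only for the \emph{minimal} period. There is no reason the inequality $|\D T^{Nq}(x_0)|\geq C_d/e_{Nq}$ should hold when the minimal period of $x_0$ is $q<Nq$.

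More fundamentally, the bootstrap you flag as ``the technical core'' is essentially circular. Upgrading $e_m\to 0$ to an exponential rate $e_m\leq C\kappa^{-m}$ is equivalent to the conclusion of Theorem~\ref{maneinduce}, and Corollary~\ref{gper} is precisely the ingredient used in Step~3 to reach that conclusion. Note too that your argument never uses hypothesis~(3) of Theorem~\ref{maneinduce}; without that hypothesis the corollary is simply false (take a non-repelling periodic point and a tiny $I$ around it), so any argument that ignores~(3) cannot close.

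The paper avoids both issues by using hypothesis~(3) directly. One first shows $|\D T^q(x_0)|>\kappa_0^2$ uniformly: for large $q$ this is Lemma~\ref{lgdp}; for small $q$ with $x_0$ lying in a short $J\in\mathcal{N}^0$ a crude distortion bound already gives a large derivative; the remaining periodic orbits form a \emph{finite} set, each with $|\D T^q(x_0)|>1$ by~(3), so their infimum is some $\kappa_0^2>1$. To pass from $x_0$ to all of $I$, your distortion inequality handles every orbit whose $|\D T^q(x_0)|$ is large. For the finitely many exceptional orbits with $|\D T^q(x_0)|$ possibly just above $1$, distortion is useless (your constant $K$ may well exceed $\kappa_0$); instead the role of $n_0$ is that $|I|\leq e_{q+n_0}$ can be made arbitrarily small by Step~1, and then ordinary continuity of the finitely many fixed functions $\D T^q$ at the finitely many points $x_0$ yields $|\D T^q|>\kappa_0$ on $I$.
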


\begin{proof} There exists a constant $\kappa_0>1$, such that for any periodic orbit $x_0$,\dots, $x_q=x_0$ with minimal period $q$, $|\D T^q(x_0)|>\kappa_0^2$: this is obvious if $q$ is large by Lemma~\ref{lgdp}; for small values of $q$, if $x_0$ belongs to a small interval $J\in\mathcal{N}^0$, this is true again by control of the distortion. By hyperbolicity, only a finite number of periodic orbits do not fall in one of the two previous cases. Hence, the modulus of their derivatives is bounded from below by finiteness.

Let $I$ be an interval such that $I$, $T(I),\cdots T^{q-1}(I)$ are disjoint and contained in $\mathcal{J}$. If $|D T^q(x_0)|$ is large enough, $|\D T^q|$ is large again on $I$ by Step 0.

There are only a finite number of periodic orbits such that $|D T^q(x_0)|$ is not large. In this case, assuming that $n_0$ is large enough and that $I$, $T(I),\cdots, T^{q-1}(I)$ are contained in $\mathcal{J}$, then $|I|<e_{n_0}$ is small (Step 1) so that by continuity, $|\D T^q|$ is greater than $\kappa_0$ on $I$.
\end{proof}

\subsection{} \textbf{Step 3: growth of derivatives, the general case.} We now prove the theorem. Let $x_0,\cdots, x_n$ be an orbit of $T$ in $\mathcal{J}$ and for any $0\leq k\leq n$ let $I_k$ be the compact interval which contains $x_k$ and is sent homeomorphically onto $[0,1]$ by $T^{n-k}$. One claims that there is a uniform constant $L$ that bounds $\sum_{0\leq k\leq n} |I_k|$. One deduces by Step 0 that for any $x$, $y$ in $I_0$,$$\left|\frac{\D T^n(x)}{\D T^n(y)}\right|\leq \exp(K_dL).$$

This implies$$|\D T^n(x_0)|\geq \frac{\exp(K_dL)}{e_n},$$which concludes the proof with Step 1. It remains to prove the claim. One considers the intervals $I_k$ for $0\leq k\leq n-n_0$. Note that for any $0\leq k\leq l\leq n-n_0$ either $I_k$ and $I_l$ are disjoint or $I_k\subset I_l$. Consequently, there exists some indices $0\leq i_1<i_2<\cdots<i_s\leq n-n_0$ such that any two distinct intervals $I_{i_{m}}$ and $I_{i_{m'}}$ are disjoint and any interval $I_k$ for $0\leq k\leq n-n_0$ is contained in some $I_{i_m}$.

Let us consider $I_k,I_l\subset I_{i_m}$ with $0\leq k<l\leq i_m$. One shows first that $\kappa_0|I_k|\leq |I_l|$ with the constant $\kappa_0$ of Corollary~\ref{gper}. Note that it is sufficient to assume that $l$ is minimal for the inclusion with those properties. There exists a point $z\in I_k$ whose orbit is periodic with minimal period $l-k$ and an interval $I\subset I_{i_m}$ that is sent homeomorphically onto $I_{i_m}$ by $T$ and contains $I_k$. By the minimality of $l$, the iterates $I,\cdots,T^{l-k-1}(I)$ are disjoint. Hence, using Corollary~\ref{gper}, $\kappa_0|I_k|\leq |I_l|$. One deduces immediately that $\sum_{0\leq k\leq n}|I_k|$ is bounded by $$L=\frac{\kappa_0}{\kappa_0-1}+n_0.$$


\begin{thebibliography}{123}
\bibitem{arnold} Arnol'd, V. I., Small denominators I. Mappings of the circle onto itself, {\em Izv. Akad. Nauk. Math. Series}, {\bf 25}, 21-86, (1961) [{\em Translations of the Amer. Math. Soc. (series 2)}, {\bf 46}, 213-284, (1965)].
\bibitem{arturlyubichwelington} Avila, A., Lyubich, M., de Melo, W., Regular or stochastic dynamics in real analytic families of unimodal maps, {\em Invent. Math.}, {\bf 154}, 451-550, (2003).
\bibitem{bencar} Benedicks, M., Carleson, L., On iterations of $1-ax^2$ on $(-1,1)$, {\em Annals of Math.}, {\bf 122}, 1-25, (1985).
\bibitem{bowen} Bowen, R., Invariant measures for Markov maps of the interval, {\em Commun. Math. Phys.}, {\bf 69}, 1-17, (1979).
\bibitem{boyland} Boyland, P., Bifurcations of circle maps: Arnol'd tongues, bistability and rotation intervals, {\em Commun. Math. Phys.}, {\bf 106}, 353-381, (1986).
\bibitem{bruin} Bruin, H., The existence of absolutely continuous invariant measures is not a topological invariant for unimodal maps, {\em Ergodic Theory \& 	Dyn. Sys.}, {\bf 18}(3), 555-565, (1998).
\bibitem{acipmultimodal} Bruin, H., Rivera-Letelier, J., Shen, W., van Strien, S., Large derivatives, backward contraction and invariant densities for interval maps, {\em Invent. Math.}, {\bf 172}, 509-533, (2008).
\bibitem{chencinergt} Chenciner, A., Gambaudo, J.-M., Tresser, C., Une remarque sur la structure des endomorphismes de degr\'e $1$ du cercle, {\em C. R. Acad. Sci. Paris}, S\'erie I \textbf{299}, 145-148, (1984).
\bibitem{colleteckmann} Collet, P., Eckmann, J.-P., Positive Lyapunov exponents and absolutely continuity for maps of the interval, {\em Ergodic Theory \& Dyn. Sys.}, {\bf 3}, 13-46, (1983).
\bibitem{these} Crovisier, S., \textit{Nombre de rotation et dynamique faiblement hyperbolique}, Ph.D. thesis, Universit\'e Paris-Sud (2001).
\bibitem{jacekmanuscript} Graczyk, J., Dynamics of circle maps with flat spots, {\em Fund. Math.}, {\bf 209}, 267-290, (2010).
\bibitem{graczykss1} Graczyk, J., Sands, D., \'Swi\c{a}tek, G., La d\'eriv\'ee schwarzienne en dynamique unimodale, {\em C. R. Acad. Sci. Paris}, S\'erie I {\bf 332}, 329-332, (2001).
\bibitem{graczykss2} Graczyk, J., Sands, D., \'Swi\c{a}tek, G., Metric attractors for smooth unimodal maps, {\em Annals of Math.}, {\bf 159}, 725-740, (2004).
\bibitem{hermanthesis} Herman, M., Sur la conjugaison diff\'erentiable des diff\'eomorphismes du cercle \`a des rotations, {\em Publ. Math. Inst. Hautes \'Etudes Sci.}, {\bf 49}, 5-233, (1979).
\bibitem{jakobson} Jakobson, M., Absolutely continuous invariant measures for one-parameter families of one-dimensional maps, {\em Commun. Math. Phys.}, {\bf 81}, 39-88, (1981).
\bibitem{densidad} Kozlovski, O., Shen, W., van Strien, S., Density of hiperbolicity in dimension one, {\em Annals of Math.}, {\bf 166}, 145-182, (2007).
\bibitem{ledra} Ledrappier, F., Some properties of absolutely continuous invariant measures on an interval, {\em Ergodic Theory \& Dyn. Sys.}, {\bf 1}, 77-93, (1981).
\bibitem{Lydicotomia} Lyubich, M., Almost every real quadratic map is either regular or stochastic, {\em Annals of Math.}, {\bf 156}, 1-78, (2002).
\bibitem{Lyu-Mil} Lyubich, M., Milnor, J., The Fibonacci unimodal map, {\em J. Amer. Math. Soc.}, {\bf 6}, 425-457, (1993). 
\bibitem{mane} Ma\~n\'e, R., Hyperbolicity, sinks and measure in one-dimensional dynamics, {\em Commun. Math. Phys.}, {\bf 100}, 495-524, (1985). See also Erratum, {\em Commun. Math. Phys.}, {\bf 112}, 721-724, (1987).
\bibitem{martensnow} Martens, M., Nowicki, T., Invariant measures for typical quadratic maps, {\em Ast\'erisque}, {\bf 261}, 239-252, (2000).
\bibitem{may} May, R. M., Simple mathematical models with very complicated dynamics, {\em Nature}, {\bf 261}, 459-467, (1976).
\bibitem{demelovanstrien} de Melo, W., van Strien, S., {\em One-dimensional dynamics}, Springer-Verlag, 1993.
\bibitem{misiurewicz} Misiurewicz, M., Rotation intervals for a class of maps of the real line into itself, {\em Ergod. Th. \& Dynam. Sys.}, {\bf 6}, 117-132, (1986).
\bibitem{nowvs} Nowicki, T., van Strien, S., Invariant measures exist under a summability condition for unimodal maps, {\em Invent. Math.}, {\bf 105}, 123-136, (1991).
\bibitem{swiatek} \'Swi\c{a}tek, G., Endpoints of rotation intervals for maps of the circle, {\em Ergod. Th. \& Dynam. Sys.}, {\bf 9}, 173-190, (1989).
\bibitem{triestino} Triestino, M., G\'en\'ericit\'e au sens probabiliste dans les diff\'eomorphismes du cercle, {\em Ensaios Matem\'aticos}, {\bf 27}, 1-98, (2014).
\end{thebibliography}
\end{document}